\newcommand\redsout{\bgroup\markoverwith{\textcolor{red}{\rule[0.5ex]{2pt}{0.4pt}}}\ULon}
\newcommand\gsout{\bgroup\markoverwith{\textcolor{green}{\rule[0.5ex]{2pt}{0.4pt}}}\ULon}
\newcommand{\PreserveBackslash}[1]{\let\temp=\\#1\let\\=\temp}
\newcolumntype{C}[1]{>{\PreserveBackslash\centering}p{#1}}
\newcolumntype{R}[1]{>{\PreserveBackslash\raggedleft}p{#1}}
\newcolumntype{L}[1]{>{\PreserveBackslash\raggedright}p{#1}}
\newcommand{\ba}{{\bf a}}
\newcommand{\bd}{{\bf d}}
\newcommand{\bff}{{\bf f}}
\newcommand{\bq}{{\bf q}}
\newcommand{\bw}{{\bf w}}
\newcommand{\bx}{{\bf x}}
\newcommand{\by}{{\bf y}}
\newcommand{\bz}{{\bf z}}
\newcommand{\bba}{{\bf A}}
\newcommand{\bbp}{{\bf P}}
\newcommand{\bmu}{{\boldsymbol{\mu}}}
\newcommand{\balpha}{{\boldsymbol{\alpha}}}
\newcommand{\bgamma}{{\boldsymbol{\gamma}}}
\newcommand{\bomega}{{\boldsymbol{\omega}}}
\newcommand{\pran}[1]{\left(#1\right)}
\newenvironment{itemize*}%
{\begin{itemize}%
\setlength{\itemsep}{0.5em}%
\setlength{\parskip}{0pt}}%
{\end{itemize}}
\newcommand{\bzeta}{{\boldsymbol{\zeta}}}
\let\R\Real
\newcommand{\bzero}{{\boldsymbol{0}}}
\newcommand{\ubar}[1]{\text{\b{$#1$}}}
\let\save@mathaccent\mathaccent
\newcommand*\if@single[3]{%
\setbox0\hbox{${\mathaccent"0362{#1}}^H$}%
\setbox2\hbox{${\mathaccent"0362{\kern0pt#1}}^H$}%
\ifdim\ht0=\ht2 #3\else #2\fi
}
\newcommand*\rel@kern[1]{\kern#1\dimexpr\macc@kerna}
\newcommand*\widebar[1]{\@ifnextchar^{{\wide@bar{#1}{0}}}{\wide@bar{#1}{1}}}
\newcommand*\wide@bar[2]{\if@single{#1}{\wide@bar@{#1}{#2}{1}}{\wide@bar@{#1}{#2}{2}}}
\newcommand*\wide@bar@[3]{%
\begingroup
\def\mathaccent##1##2{%
%Enable nesting of accents:
\let\mathaccent\save@mathaccent
%If there's more than a single symbol, use the first character instead (see below):
\if#32 \let\macc@nucleus\first@char \fi
%Determine the italic correction:
\setbox\z@\hbox{$\macc@style{\macc@nucleus}_{}$}%
\setbox\tw@\hbox{$\macc@style{\macc@nucleus}{}_{}$}%
\dimen@\wd\tw@
\advance\dimen@-\wd\z@
%Now \dimen@ is the italic correction of the symbol.
\divide\dimen@ 3
\@tempdima\wd\tw@
\advance\@tempdima-\scriptspace
%Now \@tempdima is the width of the symbol.
\divide\@tempdima 10
\advance\dimen@-\@tempdima
%Now \dimen@ = (italic correction / 3) - (Breite / 10)
\ifdim\dimen@>\z@ \dimen@0pt\fi
%The bar will be shortened in the case \dimen@<0 !
\rel@kern{0.6}\kern-\dimen@
\if#31
  \overline{\rel@kern{-0.6}\kern\dimen@\macc@nucleus\rel@kern{0.4}\kern\dimen@}%
  \advance\dimen@0.4\dimexpr\macc@kerna
%Place the combined final kern (-\dimen@) if it is >0 or if a superscript follows:
  \let\final@kern#2%
  \ifdim\dimen@<\z@ \let\final@kern1\fi
  \if\final@kern1 \kern-\dimen@\fi
\else
  \overline{\rel@kern{-0.6}\kern\dimen@#1}%
\fi
}%
\macc@depth\@ne
\let\math@bgroup\@empty \let\math@egroup\macc@set@skewchar
\mathsurround\z@ \frozen@everymath{\mathgroup\macc@group\relax}%
\macc@set@skewchar\relax
\let\mathaccentV\macc@nested@a
%The following initialises \macc@kerna and calls \mathaccent:
\if#31
\macc@nested@a\relax111{#1}%
\else
%If the argument consists of more than one symbol, and if the first token is
%a letter, use that letter for the computations:
\def\gobble@till@marker##1\endmarker{}%
\futurelet\first@char\gobble@till@marker#1\endmarker
\ifcat\noexpand\first@char A\else
  \def\first@char{}%
\fi
\macc@nested@a\relax111{\first@char}%
\fi
\endgroup
}
\begin{document}
%%%%%%%%%%%%%%%%

\RUNAUTHOR{Lu and Sturt}

\RUNTITLE{On the Sparsity of Optimal Linear Decision Rules in Robust Optimization}

\TITLE{On the Sparsity of Optimal Linear Decision Rules for a Class of Robust Optimization Problems with Box Uncertainty Sets}

% Block of authors and their affiliations starts here:
% NOTE: Authors with same affiliation, if the order of authors allows,
%   should be entered in ONE field, separated by a comma.
%   \EMAIL field can be repeated if more than one author
\ARTICLEAUTHORS{%
\AUTHOR{Haihao Lu}
\AFF{Sloan School of Management \\
MIT, \EMAIL{haihao@mit.edu}}%
%% Enter all authors la
\AUTHOR{Bradley Sturt}
\AFF{Department of Information and Decision Sciences\\
University of Illinois Chicago, \EMAIL{bsturt@uic.edu}}
%% Enter all authors
} % end of the block
%%\SingleSpacedXI
%%\OneAndAHalfSpacedXI

\ABSTRACT{%
We consider a class of production-inventory problems with box uncertainty sets from the seminal work of Ben-Tal et al. (2004) on linear decision rules in robust optimization. We prove that there always exists an optimal linear decision rule for this class of problems in which the number of nonzero parameters in the linear decision rule grows linearly in the number of time periods. This is the first result to prove that optimal linear decision rules are sparse in a widely-studied class of robust optimization problems with many time periods.  Harnessing this sparsity guarantee,  we introduce a  reformulation technique that allows robust optimization problems such as production-inventory problems to be solved as a compact linear optimization problem when most of the parameters of the linear decision rules are forced to be equal to zero. We also develop an active set method for identifying the parameters of linear decision rules that are equal to zero at optimality.  In numerical experiments on production-inventory problems with hundreds of time periods, we find that our  reformulation technique coupled with the active set method yield more than a 32x speedup over state-of-the-art linear optimization solvers in computing linear decision rules that are within 1\% of optimal. Our proofs and algorithms are based on a principled analysis of extreme points of linear optimization formulations.
}

% Sample
\KEYWORDS{Linear programming; robust optimization; linear decision rules.}

\HISTORY{First version: Mar.\, 24, 2022. Accepted for publication on Mar.\,17, 2025. }
%\HISTORY{First version: Nov.\,1, 2023. Revisions submitted on June 24, 2024 and Sep 3, 2024.  This version: July 11, 2023. }

\maketitle
%%%%%%%%%%%%%%%%%%%%%%%%%%%%%%%%%%%%%%%%%%%%%%%%%%%%%%%%%%%%%%%%%%%%%%
%\OneAndAHalfSpacedXI
%\DoubleSpacedXI
%\vspace{-1em}

\section{Introduction} \label{sec:intro}
Over the past two decades, {robust optimization} has emerged as a leading approach in operations research and management science for sequential decision-making  under uncertainty. 
One of the major reasons for its popularity is {computational}: dynamic robust optimization problems 
are often amenable to efficient approximations in complex, real-world operational planning problems.  The successful approximation techniques  for dynamic robust optimization typically rely on restricting the control policies to a simple functional form, with the most popular restriction being to {linear decision rules}~\cite{ben2004adjustable}.

 The success of linear decision rules in addressing real-world problems is  very impressive. On the empirical side, 
linear decision rules have been found to exhibit strong performance in a myriad of high-stakes robust optimization applications such as disaster response,  personalized healthcare, sustainable energy management, transportation routing, and many others \citep{simchi2019designing,lia2022distributional,eikelder2019adjustable,sani2020affine,gauvin2017decision}.  
On the theoretical side, a burgeoning literature has established that linear decision rules are provably optimal control policies in many classes of robust optimization problems  \citep{bertsimas2012power,bertsimas2010optimality,iancu2013supermodularity,gounaris2013robust,ardestani2016robust,simchi2019designing,el2021optimality,georghiou2021optimality,zhen2018adjustable}. 
 The aforementioned papers all build upon the seminal work of Ben-Tal et al. \cite{ben2004adjustable}, which showed that optimal linear decision rules for  robust optimization can be computed in polynomial time even in ``{fairly complicated models with high-dimensional state spaces and many stages}"  \citep[p. 374]{ben2004adjustable}.\looseness=-1

  However, the impressive performance of linear decision rules comes at a price. 
Compared to simpler classes of control policies such as {static decision rules}, where the decision does not depend on the past uncertainty,  linear decision rules are represented using a significantly greater number of parameters. Specifically,  the number of parameters in linear decision rules grows {{quadratically}} in the number of stages of the robust optimization problem, whereas the number of parameters for static decision rules grows {{linearly}} in the number of stages. 
Because the number of parameters for representing linear decision rules can be enormous, computing  optimal linear decision rules can require  ``the solution of monolithic and often dense optimization problems" \citep[p.814]{georghiou2019robust} which can exceed computer memory in robust optimization problems with as few as seventy-five stages \citep[p.827]{georghiou2019robust}.  
   This is problematic  because real-world applications routinely have many hundreds of stages, particularly  when a discrete-time robust optimization problem is approximating a continuous-time operational planning problem.

To get around this, a recent stream of  research has advocated  for imposing {sparsity constraints} onto linear decision rules in robust optimization problems. The driving insight behind this stream of  research
is that if many of the parameters of linear decision rules are forced to be equal to zero, then the problem of optimizing the remaining parameters can often be solved more efficiently. Numerical studies have shown that applying this insight can yield  tremendous improvements in computation times with only small losses in performance in applications such as  newsvendor networks in pharmaceutical  supply chains \cite[\S 4]{bandi2019sustainable},  unit commitment problems in power systems \cite[\S 4.3]{lorca2016multistage}, and location-transportation problems \cite{ardestani2018value}. Column-generation techniques have been proposed for identifying the subset of parameters of linear decision rules to set to zero \cite{delage2021column}. The harnessing of sparsity thus stands as one of the most promising directions  for efficiently computing optimal linear decision rules for the sizes of robust optimization problems that arise in industry.  

Ultimately, the  success of using sparsity to develop faster  algorithms will  hinge on whether robust optimization problems have optimal linear decision rules that are sparse. Indeed, if a robust optimization problem does not have  optimal linear decision rules that are sparse, then deploying a sparsity-imposing algorithm can risk leading to control policies with unexpectedly and undesirably suboptimal performance. Moreover, if the number of zero parameters in optimal linear decision rules is not a significant proportion of the total parameters, then the benefits of developing sparsity-imposing algorithms can be limited, and research efforts may be better spent on developing alternative algorithmic techniques such as those based on primal-dual saddle point  formulations or online convex optimization \cite{postek2021first,ho2018online,ben2015oracle}. To the best of our knowledge, the fundamental question of whether a significant number of zero parameters can be expected in optimal linear decision rules in any class of robust optimization problems with many time periods has remained open.   

In this paper, we resolve this open question 
by considering a class of production-inventory problems from the seminal work of Ben-Tal et al.~\cite{ben2004adjustable} on linear decision rules in robust optimization. This class of production-inventory problems served as the key illustration  in \cite{ben2004adjustable} that optimal linear decision rules can be computed in polynomial time and provide excellent performance in realistic and complex robust inventory management problems. It has since become one of the most popular classes of problems in the robust optimization literature, serving as a test bed for the complex real-world applications in which linear decision rules are routinely used; see \cite{de2016impact} and references therein. The class of production-inventory problems involves a firm which dynamically determines production quantities at multiple factories over a selling season,  a single product with uncertain demand that lies in box uncertainty sets, and complex business constraints that link the inventory levels and production decisions across multiple periods of time. All of the decisions in the class of production-inventory problems are continuous.

Our main theoretical result  of this paper (Theorem~\ref{thm:main}) establishes that the minimum number of nonzero parameters for optimal linear decision rules for the class of production-inventory problems grows \emph{subquadratically} in the number of time periods.  To state our  sparsity guarantee more formally, we recall for any instance of such production-inventory problems with $E$ factories and $T$ time periods that the number of parameters for representing linear decision rules is  equal to $\frac{1}{2} ET(T+1) = \mathcal{O}(ET^2)$. For this class of production-inventory problems, we prove in Theorem~\ref{thm:main} that there always exists an optimal linear decision rule in which the number of nonzero parameters is at most equal to $2 + 8E + 10T + 6E(T-\delta) = \mathcal{O}(ET)$ whenever an optimal linear decision rule for the instance exists, where $\delta \ge 0$ is the minimum lead time across the factories. In other words, although the number of parameters for representing linear decision rules grows \emph{quadratically} in the number of time periods, Theorem~\ref{thm:main} shows that the minimum number of {nonzero} parameters for representing {optimal} linear decision rules grows only \emph{linearly} in the number of time periods.  Our proof is  based on a principled analysis of extreme points of linear optimization formulations, and we demonstrate
via numerical experiments that our bounds are indicative of practice.\looseness=-1

Our sparsity guarantee (Theorem~\ref{thm:main}) contributes to the robust optimization literature by proving that sparsity can be a fundamental property of optimal linear decision rules in  robust optimization problems when the number of time periods is large. 
While our proof techniques for establishing sparsity are   tailored to the aforementioned  class of production-inventory problems with box uncertainty sets, our proof techniques  can also be extended to establish sparsity guarantees for a class of dynamic newsvendor problems with box uncertainty sets from \cite{ben2005retailer,bertsimas2010optimality,iancu2013supermodularity} and to production-inventory problems with non-box uncertainty sets (see \S\ref{sec:extensions}). 
 We hope that our developments for these  particular  problem classes  inspire and aid the robust optimization community to identify more examples of robust optimization problems where the existence of sparse optimal linear decision rules  is guaranteed.

Our sparsity guarantees also have  implications from the perspective of designing practically efficient algorithms for computing linear decision rules in dynamic robust optimization problems with box uncertainty sets and many time periods. 
Specifically, harnessing our sparsity guarantees,  we  introduce a novel  reformulation technique  (see \S\ref{sec:algorithm:fixed_active_set}) that allows robust optimization problems such as the production-inventory problem to be solved very efficiently when most of the parameters of the linear decision rules are forced to be equal to zero. The key insight behind our  reformulation technique is that imposing sparsity constraints onto linear decision rules can have a side effect of inducing redundancy in the constraints of the robust counterpart. By exploiting this redundancy in the constraints of the robust counterpart, our reformulation technique drastically decreases the number of auxiliary decision variables in the robust counterpart so that they match, up to  a constant factor,  the number of nonzero parameters of the linear decision rules (Proposition~\ref{prop:productioninventory_reformulation} in \S\ref{sec:algorithm:fixed_active_set}). It follows from Theorem~\ref{thm:main} and Proposition~\ref{prop:productioninventory_reformulation} that if the nonzero parameters of an optimal linear decision rule for a production-inventory problem were {known}, then an optimal linear decision rule for the production-inventory problem can be computed by solving a linear optimization problem with only $\mathcal{O}(TE)$ decision variables and constraints.

While Theorem~\ref{thm:main} does not specify which parameters of an optimal linear decision rules for the production-inventory problems will be nonzero, we show  that the set of nonzero parameters can be found {algorithmically} to high or perfect accuracy. 
Specifically, we develop a simple algorithm based on the active set method (see \S\ref{sec:algorithm:activeset}) for identifying the parameters of sparse optimal linear decision rules that are nonzero. 
%Specifically, to identify the parameters of sparse optimal linear decision rules that are nonzero, we have implemented and open sourced a simple algorithm based on the active set method. 
The algorithm consists of iterating
over different ‘active sets’ of nonzero parameters, finding a linear decision rule that is
optimal when the parameters of the linear decision rule that are not in the active set
are constrained to be equal to zero, and then using shadow prices to update the active
set. The algorithm is guaranteed to converge to an optimal linear decision rule after a finite number of
iterations and overcomes the issue that we do not know a priori which parameters of optimal linear decision rules will be nonzero. From a memory-efficiency standpoint, the algorithm can be terminated early with a suboptimal active set to avoid solving a large-scale linear optimization problem that exceeds computer memory.

We show through numerical experiments  that our  reformulation technique (\S\ref{sec:algorithm:fixed_active_set}) and active set method (\S\ref{sec:algorithm:activeset}) can find high-quality linear decision rules in applications that are {significantly} larger than could be tackled by  extant methods. For example,  in production-inventory problems with $T = 240$ time periods and $E = 5$ factories, we show in \S\ref{sec:experiment} that our approach 
offers a 32x speedup over state-of-the-art linear optimization solvers in computing linear decision rules that are within 1\% of optimal. Moreover, we show that our approach scales to applications with hundreds of time periods and dozens of state and decision variables in each time period, a regime which to our knowledge cannot be addressed by any extant algorithms from the robust optimization literature.\footnote{Our open source implementation of the  reformulation technique and active set method for computing linear decision rules in robust optimization can be accessed at \url{https://github.com/brad-sturt/LDRSolver.git}.}\looseness=-1
 
In summary, our main contributions in this paper are as follows:

\begin{enumerate}
\item We prove that the minimum number of {nonzero} parameters for representing {optimal} linear decision rules can grow subquadratically in the number of time periods  in widely-studied classes of robust optimization problems. 
\item We  introduce a   reformulation technique  that allows applications such as production-inventory problems to be solved very efficiently when sparsity constraints are imposed on linear decision rules.\looseness=-1
\item We show that our  reformulation technique can be combined with an active set method to significantly speed up and scale up the size of robust optimization problems that can be solved in practical computation times. 
\end{enumerate}

The rest of this paper is organized as follows. In \S\ref{sec:prelim}, we present a background on linear decision rules in robust optimization and production-inventory problems. In \S\ref{sec:main_result}, we state our sparsity guarantee for production-inventory problems and present an overview of our proof.  In \S\ref{sec:algorithm}, we present a reformulation technique and an active set method  for computing linear decision rules in robust optimization problems with huge numbers of time periods.  In \S\ref{sec:experiment}, we illustrate the practical implications of our main results via numerical experiments. In \S\ref{sec:extensions}, we discuss extensions of our main sparsity guarantee to other classes of robust optimization problems. In \S\ref{sec:conclusion}, we conclude and discuss future directions of research. All technical proofs can be found in the supplemental appendices.

\textbf{Notations.} We let $\R$ denote the real numbers, we use boldface letters like ${\bx}$ to denote non-scalar quantities like vectors and matrices, and we let $\| {\bx}\|_0$ denote the number of nonzeros in ${\bx}$. Given an optimization problem  $\min_\bx f(\bx)$, we say that a solution $\bar{\bx}$ is feasible for the optimization problem if and only if the cost satisfies $f(\bar{\bx}) < \infty$. It follows from this notation that  $\min_\bx f(\bx) < \infty$ if and only if an optimization problem $\min_\bx f(\bx)$ has a nonempty feasible region.

\section{Background and Problem Setting} \label{sec:prelim}
\subsection{Robust Optimization and Linear Decision Rules}
We consider robust optimization problems faced by firms in which decisions $\bx_1,\ldots,\bx_T \in \R^n$ are made sequentially over a planning horizon of $T$ discrete stages. In the beginning of each stage $t \in [T] \equiv \{1,\ldots,T\}$, the firm observes an uncertain variable $\zeta_t \in \R$ that is chosen adversarially from  an uncertainty set denoted by $\mathcal{U}_t $. 
 The goal of the firm is to choose decisions sequentially, that is, adapting to the uncertain variables observed in the past stages, to minimize the firm's cost under an adversarial choice of the uncertain variables. Such problems are denoted generically by % jointly violate the problem constraints. 
\begin{align} \tag{RO} \label{prob:main}
\max_{\zeta_1 \in \mathcal{U}_1} \min_{\bx_1 \in \R^n}\cdots \max_{\zeta_T \in \mathcal{U}_T} \min_{\bx_T \in \R^n}  C(\bx_1,\ldots,\bx_T,\zeta_1,\ldots,\zeta_T),
\end{align}
where we adopt the convention that the cost 
$C(\bx_1,\ldots,\bx_T,\zeta_1,\ldots,\zeta_T)  \in \R \cup \{ \infty \}$ 
is equal to infinity if the chosen decisions are infeasible for the given realization of the uncertain variables. We  assume without loss of generality throughout the paper that $\zeta_1 = 1$. 
  For comprehensive introductions to the many applications of dynamic robust optimization problems of the form~\eqref{prob:main},  we refer the reader to excellent survey papers such as  \cite{delage2015robust,georghiou2019decision,yanikouglu2019survey,bertsimas2011theory} as well as recent papers such as \cite{postek2023multi,daryalal2023two}.

Our work focuses on one of the most popular approximation methods for solving \eqref{prob:main}, denoted by the optimization problem~\eqref{prob:ldr}. 
Proposed in the seminal work of Ben-Tal et al.~\cite{ben2004adjustable}, \eqref{prob:ldr} aims to obtain a computationally tractable approximation of \eqref{prob:main} by restricting the decisions in each stage to be a linear function of the uncertain variables observed in the past. Formally, the set of optimal linear decision rules for \eqref{prob:main} is defined as the set of optimal solutions for   % of the form % of the uncertain quantities:
\begin{align} \tag{LDR} \label{prob:ldr}
\underset{\by_{t1},\ldots,\by_{tt} \in \R^{n}: \; \forall t \in [T] }{\textnormal{min}}  \;\;\max_{\zeta_1 \in \mathcal{U}_1,\ldots,\zeta_T \in \mathcal{U}_T} C \left(\sum_{s=1}^1 \by_{1s} \zeta_s,\ldots,\sum_{s=1}^T \by_{Ts} \zeta_s, \;\zeta_1,\ldots,\zeta_T \right). 
\end{align}
The goal of \eqref{prob:ldr} is to obtain the best parameters for the linear decision rules, denoted by  $\by_{t1},\ldots,\by_{tt} \in \R^n$ for all $t \in [T]$. Given the parameters of the linear decision rules, the decision to make on each stage $t \in [T]$ is computed as  $\bx_t=\sum_{s=1}^t \by_{ts} \zeta_s$. 
It follows from the above notation that the number of parameters used for representing linear decision rules is $\sum_{t=1}^T n t = \frac{1}{2}nT(T+1) = \mathcal{O}(nT^2)$.\looseness=-1

\subsection{The Production-Inventory Problem} \label{sec:prodinv}
Our work focuses on a class of production-inventory problems from \cite{ben2004adjustable}.  
The class of problems considers a firm with a central warehouse and $E$ factories that aim to satisfy uncertain demand for a single product over a selling season.  The selling season of the firm's product is discretized into $T$ time periods, which are spaced equally over the selling season. 
 In each time period $t \in [T]$,  the firm sequentially performs the following three steps:
 \begin{enumerate}
     \item The firm replenishes the inventory level at the central warehouse by producing additional products at their  factories.  Let  $x_{te} \ge 0$ denote the number of product units that the firm decides to produce at each of the factories $e \in [E] \equiv \{1,\ldots,E\}$ at a per-unit cost of $c_{te}$. The production quantity $x_{te}$ at factory $e$ on period $t \in [T]$ becomes available at the central warehouse on period $t + \delta_e$, with $\delta_e \ge 0$ denoting the lead time for factory $e \in [E]$. 
     \item The firm observes the customer demand  at the central warehouse. The demand at the central warehouse is denoted by $\zeta_{t+1} \in \mathcal{U}_{t+1} \equiv [\ubar{D}_{t+1},\bar{D}_{t+1}]$, which must be satisfied immediately without backlogging from the inventory in  the central warehouse. The lower and upper bounds in the uncertainty set, denoted by  $\ubar{D}_{t+1} < \bar{D}_{t+1}$, capture the minimum and maximum level of customer demand that the firm anticipates  receiving in each time period $t$.
          \item The firm verifies that the remaining  inventory  in the warehouse lies within a  pre-specified interval given by  $[V_{\text{min}}, V_{\text{max}}]$. %The purpose of the interval is to capture any physical or contractual constraints regarding the inventory level in the warehouse. 
          Specifically, the remaining inventory level in the central warehouse at the end of each time period $t \in [T]$ must satisfy
 \begin{align*}
    %V_{\text{min}} \le  v_1 + \sum_{\ell =1}^t  \sum_{e=1}^E x_{\ell e} - \sum_{s=2}^{t+1} \zeta_s \le V_{\text{max}},
   V_{\textnormal{min}} \le v_1 + \sum_{e=1}^E \sum_{\ell=1}^{t-\delta_e}   x_{\ell e} - \sum_{s=2}^{t+1} \zeta_s \le  V_{\textnormal{max}},
 \end{align*}
 where $v_1$ is the initial inventory level in the central warehouse at the beginning of the selling horizon,  $\sum_{e=1}^E \sum_{\ell=1}^{t-\delta_e}   x_{\ell e}$ is the cumulative number of product units that have become available at the central warehouse up through time period $t$, and $\sum_{s=2}^{t+1} \zeta_s$ is the cumulative customer demand that has been observed at the central warehouse up through time period $t$.
 \end{enumerate}
   In addition to satisfying the constraints on the inventory level in the central warehouse at the end of each time period, the firm's production decisions must satisfy $x_{te} \le p_{te}$ in each time period $t \in [T]$ and each factory $e \in [E]$, where $p_{te}$ is the maximum production level for factory $e$ in time period $t$. Furthermore, the firm's total production quantity across the selling season for each factory $e \in [E]$ must satisfy $\sum_{t=1}^T x_{te}  \le Q_e$, where $Q_e$ is the maximum total production level for factory $e$. 
The goal of the firm is to satisfy the customer demand at minimal cost while satisfying production and warehouse constraints.

We observe that the above class of production-inventory problems from Ben-Tal et al.~\cite{ben2004adjustable} is a special case of \eqref{prob:main} in which the cost function has the form  
\begin{subequations}
\begin{align} 
%\begin{aligned}
C(\bx_1,\ldots,\bx_T, \zeta_1,\ldots,\zeta_{T+1}) =  &  \sum_{e=1}^E \sum_{t=1}^{T}  c_{te} x_{te}\label{prob:example1:a} \\
\textnormal{subject to}\quad &\sum_{t=1}^{T} x_{te} \le  Q_e &&  \forall e  \in [E]  \label{prob:example1:b}\\
&0 \le x_{te} \le  p_{te} && \forall  e \in [E],  t \in [T] \label{prob:example1:c}\\
&V_{\textnormal{min}} \le v_1 + \sum_{e=1}^E \sum_{\ell=1}^{t-\delta_e}   x_{\ell e} - \sum_{s=2}^{t+1} \zeta_s \le  V_{\textnormal{max}} && \forall  t \in [T] \label{prob:example1:d}
\end{align}  
\end{subequations}%
and where the uncertainty sets have the form $\mathcal{U}_1 \equiv [\ubar{D}_1,\bar{D}_1], \ldots, \mathcal{U}_{T+1} \equiv [\ubar{D}_{T+1},\bar{D}_{T+1}]$ 
with $\ubar{D}_1 = \bar{D}_1 = 1$ and $\ubar{D}_{t+1} < \bar{D}_{t+1}$ for each time period $t \in [T]$. 
We use the convention that the cost function evaluates to \eqref{prob:example1:a} if the constraints \eqref{prob:example1:b}-\eqref{prob:example1:d} are satisfied and equals infinity otherwise.\footnote{We recall that \eqref{prob:main} and \eqref{prob:ldr} involve cost functions in which the number of stages with decisions is equal to the number of stages with uncertain variables. The cost function~\eqref{prob:example1:a}-\eqref{prob:example1:d} can be easily modified to match this format  by adding a dummy decision variable $\bx_{T+1} \in \R^E$ and constants $p_{T+1,e} =0$ and $ c_{T+1,e} > 0$ for each $e \in [E]$.} As a result, optimal linear decision rules for production-inventory problems can be obtained by solving~\eqref{prob:ldr}, which we observe can be written as
  \begin{equation} \label{prob:ldr_1} \tag{\textnormal{LDR-1}}
    \begin{aligned}
      & \underset{\by_{t,1},\ldots,\by_{t,t} \in \R^E:\;  \forall t \in [T]}{\textnormal{minimize}}&&\max_{\zeta_1 \in \mathcal{U}_1,\ldots,\zeta_{T+1} \in \mathcal{U}_{T+1}} \left \{ \sum_{t=1}^T \sum_{e=1}^E c_{te}  \left( \sum_{s=1}^{t} y_{t,s,e} \zeta_s \right)  \right \} \\ 
    &\textnormal{subject to}&& \sum_{t=1}^T  \left( \sum_{s=1}^t y_{t,s,e} \zeta_s \right) \le  Q_e &&  \forall e \in[E] \\
   &&& 0 \le \left( \sum_{s=1}^t y_{t,s,e} \zeta_s \right) \le  p_{te} && \forall  e\in[E],\; t \in [T]\\
   &&&V_{\textnormal{min}} \le  v_1 +  \sum_{e=1}^E \sum_{\ell=1}^{t-\delta_e} \left( \sum_{s=1}^\ell y_{\ell,s,e} \zeta_s \right)  - \sum_{s=2}^{t+1} \zeta_s  \le  V_{\textnormal{max}}&& \forall  t \in [T]\\
   &&& \quad \forall \zeta_1 \in \mathcal{U}_1,\ldots,\zeta_{T+1} \in \mathcal{U}_{T+1}.
    \end{aligned}
  \end{equation}

\section{Main Theoretical Result} \label{sec:main_result}
In this section, we present and prove the main theoretical result of this paper, which establishes the sparsity of optimal linear decision rules for the class of production-inventory problems from \S\ref{sec:prodinv}. 
%\subsection{Statement of Theorem~\ref{thm:main}}
Throughout this section, we make the following assumptions.\looseness=-1 
\begin{assumption}\label{ass:1}
\begin{enumerate}[label=\alph*.,ref={\ref{ass:1}\alph*}]
    \item  \eqref{prob:ldr} is feasible and the optimal objective value for \eqref{prob:ldr} is finite.\label{ass:1a}
    \item The uncertainty sets are intervals  $\mathcal{U}_t=[\ubar{D}_t, \bar{D}_t]$ with $\ubar{D}_1 = \bar{D}_1 = 1$ 
    and $\ubar{D}_t < \bar{D}_t$ for all $t \ge 2$.\label{ass:1b}
\end{enumerate}
\end{assumption}
%\vspace{-1em}
The first assumption states that there exist   optimal linear decision rules for the robust optimization problem. 
The second assumption imposes that the uncertain variables in the robust optimization problem are chosen from box uncertainty sets. In the production-inventory problem, this second assumption corresponds to the fact that the customer demand is uncertain but bounded in each stage $t \ge 2$. The claim that $\ubar{D}_1 = \bar{D}_1 = 1$ ensures without loss of generality that linear decision rules can have a nonzero offset. 

In our main theoretical result, presented below as Theorem~\ref{thm:main}, we establish that there  always exists a sparse optimal linear decision rule for the class of production-inventory problems \eqref{prob:ldr_1}:

\begin{theorem} \label{thm:main}
Consider a cost function of the form \eqref{prob:example1:a}-\eqref{prob:example1:d} and let   Assumption~\ref{ass:1} hold. Suppose that $c_{te}>0$ for every $t\in [T]$ and $e\in [E]$, and let $\delta \triangleq\min_{e\in [E]} \delta_e$ denote the minimum lead time. Then there exists an optimal solution $\bar{\by}$ for \eqref{prob:ldr} that satisfies $\| \bar{\by} \|_0 \le  2 + 8E + 10T + 6E(T-\delta).$
\end{theorem}

\noindent Let us make three remarks about the above theorem.

First, we recall that the production-inventory problem with linear decision rules~\eqref{prob:ldr_1} has  $\frac{1}{2}ET(T+1) = \mathcal{O}(E T^2)$ parameters. Theorem~\ref{thm:main} shows that if \eqref{prob:ldr_1} has an optimal solution, then there always exists optimal linear decision rules with $\mathcal{O}(ET)$  nonzero parameters. %As far as we are aware, this is the first theoretical result to show the existence of sparse optimal linear decision rules for a practically-important class of dynamic robust optimization problems with many time periods. 
Furthermore, we notice that the number of parameters for representing static decision rules in production-inventory problems is equal to $TE$. Theorem~\ref{thm:main} thus reveals that the complexity of optimal linear decision rules is at the same level as the complexity of static decision rules, which provides a new perspective for the success of linear decision rules: namely, the fundamental reason that linear decision rules exhibit superior performance to static decision rules in production-inventory problems is not due to the enormous parameter space of linear decision rules, but rather a very small cardinality of significant parameters that are contained only in the linear decision rules.

Second, we observe from Theorem~\ref{thm:main} that if the minimum lead time across the factories $\delta \triangleq\min_{e\in [E]} \delta_e$ converges to $T$, then the number of nonzero parameters in the optimal linear decision rules converges to $\mathcal{O}(E + T)$.  Hence, the upper bound provided by Theorem~\ref{thm:main} conforms with the observation that  additional inventory should only purchased in the early time periods when the lead times of all of the factories are close to the total number of time periods $T$.

Third, 
        we note that Theorem~\ref{thm:main} can be directly extended to production-inventory problems with multiple products in each time period. Specifically, if $\mathcal{U}_t$ is a multi-dimensional box and $\by_{t,s,e}$ is a vector in the corresponding dimension, one can show the existence of a sparse optimal solution using the same proof techniques as Theorem \ref{thm:main}. Furthermore,  our proof techniques behind Theorem~\ref{thm:main} are not limited to the class of production-inventory problems from Ben-Tal et al.~\cite{ben2004adjustable}, and we present extensions of Theorem~\ref{thm:main} to other classes of robust optimization problems in \S\ref{sec:extensions}.\looseness=-1

\subsection*{Proof Roadmap of Theorem~\ref{thm:main}} \label{sec:sparsity:proof}

In the remainder of this section, we present a high-level roadmap for the proof of Theorem~\ref{thm:main}. Our proof is based on a new understanding of the extreme points of the feasible regions of linear decision rules for a general class of dynamic robust optimization problems. We postpone the detailed proof of  Theorem~\ref{thm:main} to Appendix \ref{appx:proof_thm12}. %Our theorem and proof techniques in this section are not limited to the class of production-inventory problems from Ben-Tal et al.~\cite{ben2004adjustable}, and we present extensions of our results to other robust inventory management problems in \S\ref{sec:extensions}.

Our proof of Theorem~\ref{thm:main} focuses on a class of robust optimization   problems that includes production-inventory problems as a special case. This class of robust optimization problems is characterized by cost functions of the form 
\begin{equation} \label{line:cost} \tag{C-G}
\begin{aligned}
C(\bx_1,\ldots,\bx_T,\zeta_1,\ldots,\zeta_T) =&  \sum_{t=1}^T \ba_{0,t}^\intercal \bx_t - \sum_{t=1}^T b_{0,t} \zeta_t  \\
\textnormal{subject to}\quad&  \sum_{t=1}^T \ba_{i,t}^\intercal \bx_t - \sum_{t=1}^T b_{i,t}   \zeta_t  \le c_i   \quad \forall i \in [m],
\end{aligned} 
\end{equation} 
where the parameters of the cost function are  $\ba_{0,t},\ldots,\ba_{m,t} \in \R^{n}$ and $b_{0,t},\ldots,b_{m,t} \in \R$ for each stage $t \in [T]$ and $c_i \in \R$ for each constraint $i \in [m]$. For  robust optimization problems~\eqref{prob:main} with cost functions of the form \eqref{line:cost}, 
we observe that \eqref{prob:ldr} can be reformulated by its epigraph as
\begin{equation}  \tag{LDR-G} \label{prob:ldr_b}
\begin{aligned}
&\underset{\substack{c_0 \in \R\\ \by_{t,1},\ldots,\by_{t,t} \in \R^n:\;  \forall t \in [T]}}{\textnormal{minimize}}&& c_0 \\
&\textnormal{subject to}&& \max_{\zeta_1 \in \mathcal{U}_1, \ldots,\zeta_T \in \mathcal{U}_T}  \left \{ \sum_{s=1}^T\left( - b_{i,s} +  \sum_{t=s}^T \ba_{i,t}^\intercal \by_{t,s} \right)  \zeta_s  \right \}  \le c_i  \;\forall i \in \{0,\ldots,m\}.
\end{aligned}
\end{equation}
The decision variables in the above optimization problem include the parameters of the linear decision rules as well as an epigraph decision variable $c_0 \in \R$. 
For this general class of problems, we make the following additional assumption. 
\begin{assumption}\label{ass:2}  
If $C(\bx_1,\ldots,\bx_T,\zeta_1,\ldots,\zeta_T) < \infty$, then the decisions satisfy $\bx_1,\ldots,\bx_T \ge \bzero$. 
\end{assumption}
The above assumption essentially stipulates that the constraints of~\eqref{line:cost} ensure that feasible decisions for the robust optimization problem satisfy $\bx_t\ge \bzero$ for each stage $t\in[T]$. This is a reasonable assumption because, in practice, the decisions  usually refer to levers like order quantities or prices of products that must take nonnegative values. For example, in the class of  production-inventory problems from Ben-Tal et al.~\cite{ben2004adjustable}, the decisions $\bx_t \in \R^E$ refer to the production quantities at the factories at time period $t$, and the constraint~\eqref{prob:example1:c} guarantees that the production levels are nonnegative.\looseness=-1

Equipped with the above notation for a general class of  dynamic robust optimization problems, our proof of Theorem \ref{thm:main} is split into three major steps, organized below as Lemmas~\ref{lem:1}, \ref{lem:2}, and \ref{lem:zeros}. 
In our first step, denoted below by Lemma~\ref{lem:1}, we characterize the feasible region of \eqref{prob:ldr_b}  and show that the set of feasible solutions of \eqref{prob:ldr_b} is a polyhedron with at least one extreme point.\looseness=-1

\begin{lemma}\label{lem:1}
Let Assumptions~\ref{ass:1} and~\ref{ass:2} hold. Then  the set of feasible solutions to \eqref{prob:ldr_b} is a nonempty polyhedron with at least one extreme point. 
\end{lemma}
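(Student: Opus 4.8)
The plan is to show that the feasible region of \eqref{prob:ldr_b} can be written as a polyhedron in finitely many variables, and then to rule out the presence of a line in that polyhedron, since a nonempty polyhedron has an extreme point precisely when it contains no line. The first task is to convert each semi-infinite constraint
\[
\max_{\zeta_1 \in \mathcal{U}_1, \ldots,\zeta_T \in \mathcal{U}_T}  \left \{ \sum_{s=1}^T\left( - b_{i,s} +  \sum_{t=s}^T \ba_{i,t}^\intercal \by_{t,s} \right)  \zeta_s  \right \}  \le c_i
\]
into finitely many linear inequalities in the variables $(c_0, \by)$. Because each $\mathcal{U}_s = [\ubar{D}_s, \bar{D}_s]$ is an interval and the expression inside the maximum is separable and linear in $(\zeta_1,\ldots,\zeta_T)$, the inner maximization decomposes coordinatewise; introducing for each $i$ an auxiliary variable representing $\max\{\ubar D_s \cdot \text{coeff}, \bar D_s \cdot \text{coeff}\}$ (or, equivalently, dualizing the box) yields a finite linear system. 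Hence the feasible region of \eqref{prob:ldr_b}, call it $\mathcal{P}$, is a polyhedron (after projecting out the auxiliary variables, or by keeping them — either way it is polyhedral). Nonemptiness of $\mathcal{P}$ is immediate from Assumption~\ref{ass:1}\ref{ass:1a}, which asserts that \eqref{prob:ldr} — equivalently \eqref{prob:ldr_b} — is feasible with finite optimal value.

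The substantive step, which I expect to be the main obstacle, is showing that $\mathcal{P}$ contains no line. Suppose for contradiction that $\mathcal{P}$ contains a line through some feasible point $(c_0, \by)$ in direction $(d_0, \bd) \neq \bzero$; then both $(d_0,\bd)$ and $-(d_0,\bd)$ are recession directions of $\mathcal{P}$. This is where Assumption~\ref{ass:2} enters: the constraint system of \eqref{line:cost} forces $\bx_t \ge \bzero$ for every stage whenever the cost is finite, and this nonnegativity must propagate to the linear decision rule representation. Concretely, the rule $\bx_t = \sum_{s=1}^t \by_{t,s} \zeta_s$ must be nonnegative for all $\zeta \in \prod_s \mathcal{U}_s$; writing this out coordinatewise and using that $\zeta_1 = \ubar D_1 = \bar D_1 = 1$ while each later $\zeta_s$ ranges over a nondegenerate interval, one extracts linear inequalities on the $\by_{t,s}$ of the form $\by_{t,1} + (\text{extreme combinations of later terms}) \ge \bzero$. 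I would argue that along a line in $\mathcal{P}$ these nonnegativity inequalities, being valid in both directions $\pm(d_0,\bd)$, must hold with equality for the direction — i.e.\ $\bd$ lies in the lineality space of the nonnegativity constraints — and then show that the only such direction also violates the upper-bound-type constraints (the $\le c_i$ and, in the production-inventory case, the $x_{te} \le p_{te}$ and warehouse bounds $V_{\min} \le \cdots \le V_{\max}$) unless $\bd = \bzero$ and $d_0 = 0$. The cleanest way to organize this: show that the recession cone of $\mathcal{P}$ intersected with its negative is trivial, using that the box uncertainty sets are full-dimensional (so the semi-infinite constraints pin down each coefficient $-b_{i,s} + \sum_{t\ge s}\ba_{i,t}^\intercal \by_{t,s}$ from above and, via Assumption~\ref{ass:2}, effectively from below as well).

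So the skeleton is: (i) rewrite \eqref{prob:ldr_b} as an explicit polyhedron by dualizing the box maximizations; (ii) invoke Assumption~\ref{ass:1}\ref{ass:1a} for nonemptiness; (iii) assume a line exists and derive that its direction lies in the lineality space of the full constraint system; (iv) use Assumption~\ref{ass:2} together with the two-sided nature of the constraints induced by the full-dimensional box sets to force that direction to be zero, giving the contradiction; (v) conclude via the standard fact that a nonempty line-free polyhedron has at least one extreme point. The delicate point throughout is correctly translating the "finite cost $\Rightarrow \bx_t \ge \bzero$" statement of Assumption~\ref{ass:2} into usable linear constraints on the decision-rule parameters $\by$, since Assumption~\ref{ass:2} is phrased in terms of the original decisions $\bx_t$ rather than the lifted variables; handling the dummy stage $\bx_{T+1}$ and the normalization $\zeta_1 = 1$ carefully will matter here.
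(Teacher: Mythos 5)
Your skeleton is essentially the paper's: write the feasible set of \eqref{prob:ldr_b} as a polyhedron in finitely many variables, get nonemptiness from Assumption~\ref{ass:1}, rule out a line by exploiting the nonnegativity forced by Assumption~\ref{ass:2}, and invoke the standard fact that a nonempty, line-free polyhedron has an extreme point. The problem is that the decisive step --- showing the line direction must vanish --- is left as a hedged plan, and the completion you sketch is partly off-target. You propose that the direction lies in the lineality space of the nonnegativity constraints and then that \emph{upper-bound-type} constraints (the $x_{te}\le p_{te}$ and warehouse bounds) are needed to finish. But Lemma~\ref{lem:1} is stated for the general cost class \eqref{line:cost}, where no such upper bounds exist, so that route does not close in the lemma's setting; it is also unnecessary.

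The finish that works (and is what the paper does) uses only nonnegativity plus the nondegenerate boxes. If $(\by^0,c_0^0)+\alpha(\bd,r)$ is feasible for all $\alpha\in\R$, then feasibility in \eqref{prob:ldr_b} enforces every constraint of \eqref{line:cost} for every realization, so the induced cost is finite and Assumption~\ref{ass:2} gives $\sum_{s=1}^t\bigl(\by^0_{t,s}+\alpha\bd_{t,s}\bigr)\zeta_s\ge\bzero$ for all $\alpha\in\R$, all $t$, and all $\zeta$ in the box; sending $\alpha\to\pm\infty$ forces $\sum_{s=1}^t\bd_{t,s}\zeta_s=\bzero$ identically on the box, and since $\ubar{D}_s<\bar{D}_s$ for $s\ge 2$ while $\zeta_1=1$, varying one coordinate at a time yields $\bd_{t,s}=\bzero$ for $s\ge 2$ and then $\bd_{t,1}=\bzero$. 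The epigraph component ($r$, your $d_0$) is handled separately: the objective along the line is $c_0^0+\alpha r$, so $r\ne 0$ contradicts the finiteness of the optimal value in Assumption~\ref{ass:1} (alternatively, decreasing $c_0$ with $\by$ fixed eventually violates the $i=0$ constraint). You gesture at both ingredients (``full-dimensional boxes pin down the coefficients,'' the ``delicate'' translation of Assumption~\ref{ass:2} to the lifted variables) but never carry them out, and the detour through problem-specific upper bounds would have to be removed for the proof to be valid at the stated level of generality.
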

Let us make two observations about the above lemma. First, since \eqref{prob:ldr_b} has a linear objective function and a polyhedral feasible region, we observe that \eqref{prob:ldr_b} is a linear optimization problem. Therefore, whenever this linear optimization problem has at least one extreme point and has a finite optimal objective value, there must exist an optimal solution for \eqref{prob:ldr_b} that is an extreme point of its feasible set (see, e.g.,  \cite[Theorem 2.7]{bertsimas1997introduction}). 
Second, it follows from routine arguments in linear optimization that if $(\bar{\by},\bar{c}_0)$ is an extreme point  of \eqref{prob:ldr_b}, then the value of $\bar{c}_0$ is uniquely determined by the value of  $\bar{\by}$.\footnote{Suppose that $(\bar{\by},\bar{c}_0)$ is an extreme point of the set of feasible solutions for \eqref{prob:ldr_b}. Then we readily observe that the value $\bar{c}_0$ must  satisfy  $\bar{c}_0 = \max_{\zeta_1 \in \mathcal{U}_1,\ldots,\zeta_T \in \mathcal{U}_T}\left \{ \sum_{t=1}^T \ba_{i,t}^\intercal  \left(\sum_{s=1}^t \bar{\by}_{t,s} \zeta_s \right) - \sum_{t=1}^T b_{i,t}   \zeta_t  \right \}.$} Therefore, we will for the sake of simplicity omit $\bar{c}_0$ when referring to an extreme point $(\bar{\by},\bar{c}_0)$ of the set of feasible solutions to \eqref{prob:ldr_b}.

In the second step of our proof of Theorem~\ref{thm:main}, we provide an explicit characterization of the extreme points for the set of feasible solutions of \eqref{prob:ldr_b}. This key step, which is presented below as Lemma~\ref{lem:2}, reveals that the extreme points of  the set of feasible solutions of \eqref{prob:ldr_b} can always be represented as the unique solution of a certain decomposable system of equations. 
\begin{lemma} \label{lem:2}
Let Assumption~\ref{ass:1} hold, and let  $(\bar{\by},\bar{c}_0)$ be an extreme point of the feasible set of \eqref{prob:ldr_b}. Then there exists 
an index set $\mathcal{I}^{\bar{\by}} \subseteq \{0,\ldots,m\}$,  
%\item 
an index set $\mathcal{T}_i^{\bar{\by}} \subseteq [T]$  for each $i \in \mathcal{I}^{\bar{\by}}$, and 
%\item  
a hyperplane $(\balpha_{i}^{\bar{\by}}, \beta_{i}^{\bar{\by}})$ for each $i \in \mathcal{I}^{\bar{\by}}$ 
 such that $\bar{\by}$ is the unique solution of the following system.
 \begin{align}
&&\sum_{s=1}^T \sum_{t=s}^T \balpha_{i,t,s}^{\bar{\by}} \cdot  \by_{t,s}   &= \beta_{i}^{\bar{\by}} && \forall i \in \mathcal{I}^{\bar{\by}}, \tag{HARD} \label{line:type1} \\
 && \sum_{t=s}^T \ba_{i,t} \cdot \by_{t,s} &=  b_{i,s} && \forall i \in \mathcal{I}^{\bar{\by}}, s \in \mathcal{T}_i^{\bar{\by}}. \tag{EASY} \label{line:type2}
 \end{align}
\end{lemma}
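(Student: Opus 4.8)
\textbf{Proof proposal for Lemma~\ref{lem:2}.}

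The plan is to exploit the fact that $(\bar{\by},\bar{c}_0)$ being an extreme point of a polyhedron means it is the unique solution of the subsystem of active constraints at that point, chosen among a set of active constraints whose gradients span the full decision space. The constraints of \eqref{prob:ldr_b} are the $m+1$ robust constraints indexed by $i \in \{0,\ldots,m\}$, each of which has the form $\phi_i(\by, c_0) \le 0$ where $\phi_i$ is the difference between a maximum over the box $\mathcal{U}_1\times\cdots\times\mathcal{U}_T$ of a linear function of $\zeta$ (with coefficients linear in $\by$) and the right-hand side. So the first step is to write each robust constraint $i$ explicitly: since $\mathcal{U}_s = [\ubar D_s, \bar D_s]$ is an interval, the inner maximum $\max_{\zeta}\sum_{s=1}^T\big(-b_{i,s}+\sum_{t=s}^T\ba_{i,t}^\intercal\by_{t,s}\big)\zeta_s$ separates over coordinates $s$, and for each $s$ the optimal $\zeta_s$ is $\bar D_s$ if the coefficient $\gamma_{i,s}(\by) := -b_{i,s}+\sum_{t=s}^T\ba_{i,t}^\intercal\by_{t,s}$ is positive and $\ubar D_s$ if it is negative (either choice if zero). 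Hence on a neighborhood of $\bar\by$ the robust constraint $i$ is equivalent to a single affine inequality $\sum_{s=1}^T \gamma_{i,s}(\by)\,\widehat\zeta_{i,s} \le c_i$ where $\widehat\zeta_{i,s}\in\{\ubar D_s,\bar D_s\}$ is a maximizer for the constraint $i$ at $\bar\by$; this affine inequality in $(\by,c_0)$ is the "supporting hyperplane" of the $i$-th robust constraint at $\bar\by$, and its coefficients define $(\balpha_i^{\bar\by},\beta_i^{\bar\by})$.

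The second step is the standard extreme-point characterization: at the extreme point $(\bar\by,\bar c_0)$, the set of active robust constraints $\mathcal{I}^{\bar\by} := \{i : \phi_i(\bar\by,\bar c_0)=0\}$ has the property that the corresponding supporting hyperplanes have gradients spanning $\R^{(\text{number of }\by\text{ variables})+1}$, and $(\bar\by,\bar c_0)$ is the unique solution of the linear system formed by setting all these supporting hyperplanes to equality. (Here one must note $i=0$ is always active since $\bar c_0$ is determined by $\bar\by$ as in the footnote, and the gradient in the $c_0$-direction comes only from constraint $0$, which pins down $\bar c_0$; the remaining $\by$-directions must then be pinned down by the $i\ge 1$ hyperplanes together with constraint $0$.) This already yields a system of the form \eqref{line:type1} with $\balpha_{i,t,s}^{\bar\by} = \ba_{i,t}\,\widehat\zeta_{i,s}$ and $\beta_i^{\bar\by} = c_i + \sum_{s=1}^T b_{i,s}\widehat\zeta_{i,s}$ — but so far this only gives the \eqref{line:type1} block, not the sharper decomposable \eqref{line:type2} block.

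The crux — and the step I expect to be the main obstacle — is extracting the \eqref{line:type2} equations, i.e.\ showing that for each active constraint $i$ one can identify a subset $\mathcal{T}_i^{\bar\by}\subseteq[T]$ of "stages" for which the \emph{per-stage} coefficient identity $\sum_{t=s}^T \ba_{i,t}\cdot\by_{t,s} = b_{i,s}$ (equivalently $\gamma_{i,s}(\bar\by)=0$) holds and, more importantly, that adding these per-stage equations back in, one still characterizes $\bar\by$ uniquely while being able to \emph{drop} enough of the aggregated \eqref{line:type1} rows. The mechanism should be: if $\gamma_{i,s}(\bar\by)\neq 0$ for some $s$, then the maximizer $\widehat\zeta_{i,s}$ is a \emph{vertex} of the interval (uniquely $\bar D_s$ or $\ubar D_s$), and the inner maximum — hence the active-constraint function $\phi_i$ — is differentiable in $\by$ in a neighborhood with a locally constant gradient; whereas if $\gamma_{i,s}(\bar\by)=0$ the max is attained on the whole edge and the constraint is "kinked" in the $s$-direction. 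The claim to prove is that the kinked directions are precisely those captured by \eqref{line:type2}: near $\bar\by$, the feasible region of robust constraint $i$ is $\{\by : \gamma_{i,s}(\by)=0\ \forall s\in\mathcal{T}_i^{\bar\by}\}\cap\{\by : \sum_{s\notin\mathcal{T}_i^{\bar\by}}\gamma_{i,s}(\by)\widehat\zeta_{i,s}\le c_i + (\text{const})\}$ locally — intuitively, being at the kink forces the kinked coordinates to zero. Making this rigorous requires a careful local analysis of $\phi_i$ as a piecewise-linear function and an argument that at an extreme point the binding structure must include all these equality pieces; I would handle it by taking $\mathcal{T}_i^{\bar\by} := \{s\in[T] : \gamma_{i,s}(\bar\by)=0\}$ for each $i\in\mathcal{I}^{\bar\by}$, verifying \eqref{line:type2} holds by definition, and then showing that the combined system \eqref{line:type1}--\eqref{line:type2} (with the \eqref{line:type1} rows now taken over the \emph{non-kinked} coordinates only, or even kept as-is) still has $\bar\by$ as its unique solution — which follows because any $\by$ solving \eqref{line:type2} agrees with $\bar\by$ on the kinked coordinates, so the full supporting hyperplane for constraint $i$ reduces on the solution set of \eqref{line:type2} to exactly the aggregated \eqref{line:type1} row, and uniqueness is inherited from the extreme-point property established in the second step. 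The remaining bookkeeping — checking that the aggregation $\sum_{s=1}^T\sum_{t=s}^T$ in \eqref{line:type1} correctly reindexes the double sum $\sum_{s=1}^T\gamma_{i,s}(\by)\widehat\zeta_{i,s}$, and that Assumption~\ref{ass:1}(b) (in particular $\ubar D_s < \bar D_s$ for $s\ge 2$, so the edge/vertex dichotomy is genuine, with $s=1$ degenerate since $\ubar D_1=\bar D_1=1$) is used exactly to guarantee the maximizer structure — is routine and I would defer it to the appendix.
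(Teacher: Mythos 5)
Your first step (separating the inner maximum over the box coordinate by coordinate and classifying each stage $s$ by the sign of $\gamma_{i,s}(\bar{\by}) \triangleq -b_{i,s}+\sum_{t=s}^T \ba_{i,t}^\intercal \bar{\by}_{t,s}$) is the right starting point, but the uniqueness part of your proposal has a genuine gap. The ``standard extreme-point characterization'' you invoke in your second step --- that $(\bar{\by},\bar{c}_0)$ is the unique solution of \emph{one} supporting hyperplane per active robust constraint set to equality --- is false for piecewise-linear (as opposed to affine) constraints. A dimension count already rules it out: that system has at most $m+1$ equations while $\by$ has $nT(T+1)/2$ coordinates, so it cannot have a unique solution in exactly the regime the lemma addresses, even though Lemma~\ref{lem:1} guarantees extreme points exist there; schematically, at the apex of $\{(y,c_0):\, |y|\le c_0\}$ the single supporting hyperplane $y=c_0$ set to equality is an entire line. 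Relatedly, your claim that near $\bar{\by}$ an active robust constraint is ``equivalent to a single affine inequality,'' and your later local claim that feasibility of constraint $i$ forces $\gamma_{i,s}(\by)=0$ on the kinked coordinates, both fail precisely when $\mathcal{T}_i^{\bar{\by}}\neq\emptyset$ is the interesting case: if some $\gamma_{i,s_1}(\bar{\by})\neq 0$, one can perturb a kinked coordinate away from zero and compensate in coordinate $s_1$ while keeping constraint $i$ feasible and even active. Since your final step obtains \eqref{line:type2} ``by definition'' and then inherits uniqueness of \eqref{line:type1}--\eqref{line:type2} from the flawed second step, the heart of the lemma --- why a single aggregated equation per active constraint plus the per-stage equations pins down $\bar{\by}$ --- is never actually proved; the argument is circular at that point.

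The repair is to work with the explicit polyhedral description rather than with one supporting hyperplane per constraint. Because each $\mathcal{U}_s$ is an interval, each robust constraint $i$ is equivalent to the finite family of linear inequalities obtained by enumerating $D_s^*\in\{\ubar{D}_s,\bar{D}_s\}$ for all $s$, so the feasible set of \eqref{prob:ldr_b} is a polyhedron in standard form and the extreme point $(\bar{\by},\bar{c}_0)$ is a basic feasible solution of \emph{that} description, i.e., the unique solution of \emph{all} active linear pieces. For an active constraint $i$ the active pieces are exactly the vertex choices taking $\bar{D}_s$ where $\gamma_{i,s}(\bar{\by})>0$, $\ubar{D}_s$ where $\gamma_{i,s}(\bar{\by})<0$, and either endpoint where $\gamma_{i,s}(\bar{\by})=0$. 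Subtracting two active pieces that differ only in a zero coordinate $s\ge 2$ and using $\bar{D}_s-\ubar{D}_s>0$ from Assumption~\ref{ass:1} produces exactly the equations \eqref{line:type2}, after which every active piece for constraint $i$ collapses to the single aggregated equation \eqref{line:type1}. Consequently any solution of \eqref{line:type1}--\eqref{line:type2} satisfies every active piece and hence equals $\bar{\by}$, while $\bar{\by}$ satisfies \eqref{line:type2} by the definition of the zero set; uniqueness is thus inherited from the basic-feasible-solution property of the vertex-enumerated polyhedron. It is the multiplicity of active pieces on the kinked coordinates, not a single supporting hyperplane, that determines $\bar{\by}$.
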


 In a nutshell, Lemma~\ref{lem:2} establishes that every extreme point of the feasible set of \eqref{prob:ldr_b} is the unique solution to a linear system that can be decomposed into two types of equations: a small number of hard equations  and a large number of easy equations. Indeed, the first type of equations \eqref{line:type1} is defined by hyperplanes $(\balpha_{i}^{\bar{\by}}, \beta_{i}^{\bar{\by}})$ which are functions of $\bar{\by}$, the extreme point of the set of feasible solutions of \eqref{prob:ldr_b}. We refer to this first type of equations by the moniker  \eqref{line:type1} because the structure of the hyperplanes $(\balpha_{i}^{\bar{\by}}, \beta_{i}^{\bar{\by}})$ cannot be analyzed independently of extreme point $\bar{\by}$. 
In contrast, the second type of equations \eqref{line:type2} is defined by hyperplanes that are independent of $\bar{\by}$, and so the structure of the second type of equations can be analyzed statically using the structure of the underlying robust optimization problem. 
The number of equations in \eqref{line:type1} is at most equal to $m+1 = \mathcal{O}(m)$, where $m$ is the number of constraints in \eqref{line:cost}, whereas the number of equations in \eqref{line:type2} is at most equal to  $(m+1)T = \mathcal{O}(mT)$. Note for the production-inventory problem from \S\ref{sec:prodinv} that the number of constraints in lines~\eqref{prob:example1:a}-\eqref{prob:example1:d} is $m = 1 + E + 2ET + 2T = \mathcal{O}(ET)$. 
Hence,  when the number of stages is large, we observe that there can be significantly more equations of type~\eqref{line:type2} than of  type~\eqref{line:type1}.

The third and final step in our proof of Theorem~\ref{thm:main} is to show that the unique solution to the system of equations \eqref{line:type1}-\eqref{line:type2} is sparse when \eqref{line:cost} is equal to the cost function of the production-inventory problem from lines~\eqref{prob:example1:a}-\eqref{prob:example1:d}. 
Specifically, it turns out in many practical problems including the production-inventory problem from Ben-Tal et al.~\cite{ben2004adjustable} 
that the system of equations \eqref{line:type1}-\eqref{line:type2} can be massaged into an instance of a system of equations denoted below \eqref{line:system_lemma:1}-\eqref{line:system_lemma:3}, in which the number of equations in line~\eqref{line:system_lemma:1} grows  linearly in the number of stages of the robust optimization problem. Through this insight, the following Lemma~\ref{lem:zeros} proves that the number of nonzeros in every extreme point of \eqref{prob:ldr_b} in  problems such as the  production-inventory problem from Ben-Tal et al.~\cite{ben2004adjustable} grows linearly with respect to the number of stages.

\begin{lemma} \label{lem:zeros}
Let $\bbp_1 \in \R^{m_1 \times n}$, $\bbp_2 \in \R^{m_2 \times n}$, $\bq \in \R^{m_1}$, and $\mathcal{N} \subseteq [n]$, where $m_1, m_2 \le n$. Suppose that there is a unique $\bar{\bz} \in \R^n$  that satisfies the system of equations
\begin{align}
\bbp_1 \bz &= \bq \label{line:system_lemma:1} \tag{S-1}\\
\bbp_2 \bz &= \bzero \label{line:system_lemma:2} \tag{S-2}\\
z_j &= 0 \quad \forall j \in \mathcal{N} \label{line:system_lemma:3} \tag{S-3},
%\end{aligned} %\label{line:system_lemma}
\end{align}
and suppose that each column of $\bbp_2$ has at most one nonzero entry, that is, $\sum_{i=1}^{m_2} \mathbb{I} \left \{ p_{2,i,j} \neq 0 \right \} \le 1$ for each $j \in [n]$. Then the unique solution $\bar{\bz}$ has at most $2 m_1$ nonzero entries, that is, $\| \bar{\bz} \|_0 \le 2m_1$. 
\end{lemma}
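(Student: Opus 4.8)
The plan is to bound the support $S \coloneqq \{ j \in [n] : \bar z_j \neq 0 \}$ directly, turning the counting problem into a rank computation via uniqueness of $\bar{\bz}$, and then using the column-sparsity of $\bbp_2$ to control that rank.

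First I would record that $S \cap \mathcal{N} = \emptyset$, since \eqref{line:system_lemma:3} forces $\bar z_j = 0$ for all $j \in \mathcal{N}$. Next, I would use the standard fact that $\bar{\bz}$ being the \emph{unique} solution of the (consistent) linear system \eqref{line:system_lemma:1}--\eqref{line:system_lemma:3} is equivalent to the stacked coefficient matrix of that system having trivial nullspace, i.e.\ full column rank $n$. Restricting any nullspace vector to the coordinates in $S$ and padding with zeros off $S$ shows that the submatrix obtained by keeping only the columns indexed by $S$ still has trivial nullspace, hence full column rank $|S|$. Because $S \cap \mathcal{N} = \emptyset$, each row coming from \eqref{line:system_lemma:3} restricts to the zero row on the columns in $S$ and can be discarded; writing $\bbp_1^S$ and $\bbp_2^S$ for the restrictions of $\bbp_1$ and $\bbp_2$ to the columns in $S$, this says that the stacked matrix $\begin{pmatrix} \bbp_1^S \\ \bbp_2^S \end{pmatrix}$ has rank exactly $|S|$.

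The combinatorial heart of the argument is to bound $\operatorname{rank}(\bbp_2^S)$. I would partition $S = S_0 \sqcup S_1$, where $S_0$ consists of the $j \in S$ whose column in $\bbp_2$ is identically zero and $S_1$ consists of the $j \in S$ whose column in $\bbp_2$ has exactly one nonzero entry (by hypothesis these are the only two possibilities). For each row $i \in [m_2]$, set $R_i \coloneqq \{ j \in S_1 : p_{2,i,j} \neq 0 \}$; since every column of $\bbp_2$ has at most one nonzero entry, the sets $R_i$ are pairwise disjoint with $\bigcup_{i} R_i = S_1$. The key observation is that $|R_i| \neq 1$ for every $i$: if $R_i = \{ j^\ast \}$, then, using $\bar z_j = 0$ for $j \notin S$ and $p_{2,i,j} = 0$ for $j \in S_0$, the $i$-th equation of $\bbp_2 \bar{\bz} = \bzero$ collapses to $p_{2,i,j^\ast}\, \bar z_{j^\ast} = 0$, which would force $\bar z_{j^\ast} = 0$ and contradict $j^\ast \in S_1 \subseteq S$. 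Consequently, the only rows of $\bbp_2^S$ that are not identically zero are those with $|R_i| \ge 2$, and there can be at most $|S_1|/2$ of them, so $\operatorname{rank}(\bbp_2^S) \le |S_1|/2 \le |S|/2$.

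Finally I would combine these bounds with subadditivity of rank: $|S| = \operatorname{rank}\begin{pmatrix} \bbp_1^S \\ \bbp_2^S \end{pmatrix} \le \operatorname{rank}(\bbp_1^S) + \operatorname{rank}(\bbp_2^S) \le m_1 + |S|/2$, and rearranging gives $|S| \le 2 m_1$, i.e.\ $\| \bar{\bz} \|_0 \le 2 m_1$. The step I expect to require the most care is the reduction from ``the full system \eqref{line:system_lemma:1}--\eqref{line:system_lemma:3} has a unique solution'' to ``the column-restricted $(\bbp_1, \bbp_2)$-system has full column rank $|S|$'': one must check that the equations $z_j = 0$ for $j \in \mathcal{N}$ genuinely drop out after restricting to the columns in $S$, which is precisely where $S \cap \mathcal{N} = \emptyset$ is used. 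Everything else is bookkeeping with the disjoint sets $R_i$.
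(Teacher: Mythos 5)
Your proof is correct, and it takes a genuinely different route from the paper's. The paper argues by elimination: after two harmless reductions (dropping linearly dependent rows of $\bbp_2$ and rows whose nonzeros all lie in $\mathcal{N}$), it selects in each row $i$ of $\bbp_2$ a pivot column $j_i \notin \mathcal{N}$, substitutes $z_{j_i}$ out using that row's equation, and uses the column-sparsity hypothesis to conclude that the pivot set $\{j_1,\ldots,j_{m_2}\}$ and the remaining row supports $\mathcal{S}_1,\ldots,\mathcal{S}_{m_2}$ are pairwise disjoint; uniqueness of the reduced system (which retains only the $m_1$ equations $\bbp_1\bz=\bq$ plus the zero constraints) then gives at most $m_1$ nonzeros among the non-pivot coordinates, and disjointness gives at most $m_1$ more among the pivots, for a total of $2m_1$. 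You instead work directly with the support $S$ of $\bar{\bz}$: uniqueness forces the columns of the stacked system indexed by $S$ to be linearly independent, the rows coming from \eqref{line:system_lemma:3} vanish on those columns because $S\cap\mathcal{N}=\emptyset$, and rank subadditivity combined with your observation that no row of $\bbp_2$ can meet $S$ in exactly one column (else that coordinate would be forced to zero by \eqref{line:system_lemma:2}) yields $|S|\le \operatorname{rank}(\bbp_1^S)+\operatorname{rank}(\bbp_2^S)\le m_1+|S|/2$. The two arguments exploit column-sparsity differently: the paper uses it only to make the substitution index sets disjoint and never invokes feasibility of $\bar{\bz}$ for \eqref{line:system_lemma:2} beyond uniqueness, whereas your no-singleton-row step does use feasibility; in exchange you dispense with the paper's preprocessing of $\bbp_2$ entirely, and the factor $2$ emerges from solving $|S|\le m_1+|S|/2$ rather than from adding two separate counts of $m_1$. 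Both are complete; yours is the more self-contained linear-algebra argument, while the paper's elimination viewpoint mirrors how the lemma is actually deployed (substituting out the EASY-type equations) in the proof of Theorem~\ref{thm:main}.
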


In summary, our proof of Theorem \ref{thm:main} considers a more general class of cost functions \eqref{line:cost} and looks at the epigraph formulation~\eqref{prob:ldr_b} of the corresponding problem of computing optimal linear decision rules. Under reasonable assumptions, we show in~Lemma \ref{lem:1} that \eqref{prob:ldr_b} is indeed a linear optimization problem with at least one extreme point in its feasible region, thus establishing the existence of an optimal extreme point. Furthermore, we show in Lemma \ref{lem:2} that any extreme point of the feasible region is the unique solution to a linear system with two types of equations \eqref{line:type1} and \eqref{line:type2}, where the number of equations in \eqref{line:type1} is significantly smaller than the number of equations in \eqref{line:type2}. Applying Lemma \ref{lem:2} to the production-inventory problem, we obtain the corresponding \eqref{line:type1} and \eqref{line:type2}. With some further reformulation of these equations, we can then utilize Lemma~\ref{lem:zeros} to conclude that the number of nonzeros in every extreme point grows at most linearly with respect to the number of stages of the robust optimization problem. Our formal proof of Theorem~\ref{thm:main} is found in Appendix~\ref{appx:proof_thm12}. 

Lastly, we highlight that in this proof, the structure of the production-inventory problem is utilized only to obtain the corresponding \eqref{line:type1} and \eqref{line:type2} equations using Lemma \ref{lem:2}, and the reformulation of equations is tied with this problem structure. We will show in Theorem \ref{thm:3} how the same proof idea can be utilized to obtain the sparsity results for the dynamic newsvendor problem with a different set of \eqref{line:type1} and \eqref{line:type2} equations and a different reformulation technique.

\section{The Value of Sparsity in Algorithm Design} \label{sec:algorithm}
In this section, we discuss the implications of Theorem~\ref{thm:main} on the design of practically-efficient algorithms for computing linear decision rules in dynamic robust optimization problems. Specifically, our main contribution in this section is a novel algorithm for solving \eqref{prob:ldr} in applications in which the optimal linear decision rules are sparse and the number of time periods is large.  Our  algorithm is applicable to the broad class of dynamic robust optimization    problems in which the  uncertainty sets satisfy Assumption~\ref{ass:1} and the cost function has the form~\eqref{line:cost}\footnote{A discussion of the class of dynamic robust optimization problems in which the  uncertainty sets satisfy Assumption~\ref{ass:1} and the cost function has the form~\eqref{line:cost} can be found in \S\ref{sec:main_result}.}. In \S\ref{sec:experiment},  we show through numerical experiments  that our  algorithm from this section can provide {significant} speedups over state-of-the-art linear optimization solvers for computing near-optimal linear decision rules in robust optimization problems with huge numbers of time periods. 
\subsection{Preliminaries} \label{sec:algorithm:prelim}

The overarching goal of our algorithm, which is presented formally in \S\ref{sec:algorithm:activeset},  is to find an optimal or near-optimal solution to \eqref{prob:ldr} without ever needing to solve a large-scale linear optimization problem. To accomplish this goal, each iteration of our  algorithm consists of solving a restricted version of \eqref{prob:ldr} in which  sparsity is imposed onto the linear decision rules.\looseness=-1

In greater detail, each iteration of our algorithm begins with a choice of an active set $\mathcal{A} \subseteq \{(t,s,j): t,s \in [T], j \in [n], s \le t \}$ corresponding to the parameters of the linear decision rules that will be allowed to take values other than zero. If the uncertainty sets satisfy Assumption~\ref{ass:1} and the cost function  has the form~\eqref{line:cost},  then the optimization problem~\eqref{prob:ldr} with sparsity constraints from  the active set $\mathcal{A}$  is given by\looseness=-1
\begin{equation*} 
\begin{aligned}
&\underset{\substack{c_0 \in \R\\ \by_{t,1},\ldots,\by_{t,t} \in \R^n:\;  \forall t \in [T]}}{\textnormal{minimize}}&& c_0 \\
&\textnormal{subject to}&& \max_{\zeta_1 \in \mathcal{U}_1, \ldots,\zeta_T \in \mathcal{U}_T}  \left \{ \sum_{s=1}^T\left( - b_{i,s} +  \sum_{t=s}^T \ba_{i,t}^\intercal \by_{t,s} \right)  \zeta_s  \right \}  \le c_i  &&\forall i \in \{0,\ldots,m\}\\
&&& y_{t,s,i} = 0 && \forall (t,s,i) \notin \mathcal{A},
\end{aligned}
\end{equation*}
which can be rewritten equivalently as
\begin{equation}  \tag{LDR-$\mathcal{A}$} \label{prob:ldr_A}
\begin{aligned}
&\underset{\substack{c_0 \in \R\\ y_{t,s,j} \in \R :\;  \forall (t,s,j) \in \mathcal{A}}}{\textnormal{minimize}}&& c_0 \\
&\textnormal{subject to}&&   \sum_{s = 1}^T  \max_{\zeta_s \in \mathcal{U}_s } \left \{ \left( - b_{i,s} + \sum_{t,j: (t,s,j) \in \mathcal{A}} a_{i,t,j} y_{t,s,j} \right) \zeta_s \right \}  \le c_i  &&\forall i \in \{0,\ldots,m\}.\end{aligned}
\end{equation}
By solving the optimization problem~\eqref{prob:ldr_A} in each iteration of our algorithm, our algorithm obtains a linear decision rule in which at most $| \mathcal{A}|$ of the parameters of the linear decision rule are nonzero. We readily observe that the optimization problem~\eqref{prob:ldr_A} is a restricted version of the optimization problem~\eqref{prob:ldr}, in the sense that any linear decision rule that is feasible for \eqref{prob:ldr_A} is a feasible linear decision rule for \eqref{prob:ldr}. 

Our motivation for designing an algorithm around the optimization problem~\eqref{prob:ldr_A} is based on problem size. Indeed,  \eqref{prob:ldr_A} is an optimization problem with approximately $| \mathcal{A}|$ decision variables, whereas the optimization problem~\eqref{prob:ldr}  has approximately $nT^2$ decision variables. Hence, if each iteration of our algorithm chooses an active set that is sparse, meaning that $|\mathcal{A}| \ll nT^2$, then  storing and manipulating the decision variables of the optimization problem~\eqref{prob:ldr_A} will require significantly less computer memory than storing and manipulating the decision variables of \eqref{prob:ldr}. Moreover, we established in  \S\ref{sec:main_result} that there can exist  optimal linear decision rule for real-world applications of robust optimization that satisfy $ \| \by \|_0 = \mathcal{O}(nT)$. Therefore,  we observe that it can be possible to choose a sparse active set satisfying $| \mathcal{A}| \ll nT^2$ for which the optimization problem~\eqref{prob:ldr_A} is equivalent to the optimization problem~\eqref{prob:ldr}.

In view of the above motivation, the rest of this section is organized as follows. In  \S\ref{sec:algorithm:fixed_active_set}, we  develop a  reformulation technique that enables us to find an optimal solution to \eqref{prob:ldr_A} in reasonable computation time  when the active set is sparse.
In \S\ref{sec:algorithm:activeset}, we present 
an iterative algorithm that uses duality to find an optimal  active set $\mathcal{A}$ in the optimization problem~\eqref{prob:ldr_A}. 

% In \S\ref{sec:algorithm:improvements}, we discuss several improvements and implementation details to the active set method. % and then 

\subsection{Compact Reformulation For Fixed Active Set} \label{sec:algorithm:fixed_active_set}
In this subsection, we develop a novel    reformulation technique for the optimization problem~\eqref{prob:ldr_A}. 
When the chosen active set is sparse, we show that our  linear optimization reformulation can  have drastically fewer  decision variables and constraints compared to the standard linear optimization reformulation that is obtained by the  traditional robust counterpart technique.\looseness=-1

To motivate our proposed reformulation technique, we begin by discussing the standard reformulation of \eqref{prob:ldr_A} obtained using the robust counterpart technique. 
 Indeed, we observe from strong duality for linear optimization and from Assumption~\ref{ass:1} that the following equality holds for each time period $s \in \{1,,\ldots,T\}$ and  each constraint $i \in \{0,\ldots,m\}$.
 \begin{align*}
&\max_{\zeta_s \in \mathcal{U}_s} \left \{ \left( - b_{i,s} + \sum_{t,j: (t,s,j) \in \mathcal{A}} a_{i,t,j} y_{t,s,j} \right) \zeta_s \right \} = \left[ \begin{aligned}
    &\underset{\bar{\omega}_{i,s},\ubar{\omega}_{i,s} \in \R}{\text{minimize}}&& \bar{D}_s  \bar{\omega}_{i,s}  - \ubar{D}_s \ubar{\omega}_{i,s}  \\
    &\textnormal{subject to}&& \bar{\omega}_{i,s} - \ubar{\omega}_{i,s} =  -b_{i,s} + \sum_{t,j: (t,s,j) \in \mathcal{A}}  a_{i,t,j}  y_{t,s,j} \\
        &&& \bar{\omega}_{i,s}, \ubar{\omega}_{i,s} \ge 0 
    \end{aligned} \right]. 
\end{align*}
Applying the above equality to each period $s \in \{1,\ldots,T\}$ and  constraint $i \in \{0,\ldots,m\}$, the robust counterpart technique yields the  following standard reformulation of \eqref{prob:ldr_A} as a linear optimization problem.
\begin{equation}   \tag{P-$\mathcal{A}$'}\label{prob:P_A_1}
\begin{aligned}
&\underset{\substack{c_0 \in \R\\y_{t,s,j} \in \R :\;  \forall (t,s,j) \in \mathcal{A}\\ \bar{\bomega}_i, \ubar{\bomega}_i \in \R^T: \forall i \in \{0,\ldots,m\}}}{\textnormal{minimize}}&& c_0 \\%\sum_{s =1}^T \left(  \bar{D}_s  \bar{\omega}_{0,s}  - \ubar{D}_s \ubar{\omega}_{0,s} \right) \\
&\textnormal{subject to}&&\sum_{s =1}^T \left(  \bar{D}_s  \bar{\omega}_{i,s}  - \ubar{D}_s \ubar{\omega}_{i,s} \right) \le c_i  && \forall i \in \{0,\ldots,m\} \\
    &&& \bar{\omega}_{i,s} - \ubar{\omega}_{i,s} =  -b_{i,s} + \sum_{t,j: (t,s,j) \in \mathcal{A}}  a_{i,t,j}  y_{t,s,j}  && \forall i \in  \{0,\ldots,m  \}, \;s \in [T] \\
    &&&  \bar{\omega}_{i,s}, \ubar{\omega}_{i,s} \ge 0 && \forall i \in \{0,\ldots,m\}, \; s \in [T].
\end{aligned}
\end{equation}

Unfortunately, regardless of our choice of the active set, we observe that the numbers of decision variables and constraints in the linear optimization problem~\eqref{prob:P_A_1} still grow \emph{quadratically} with respect to the number of time periods. Indeed, the linear optimization problem \eqref{prob:P_A_1} consists of approximately $| \mathcal{A}| + 2mT$ decision variables and $mT$ constraints. Moreover, the number of constraints $m$ in real-world dynamic robust optimization problems typically satisfies the inequality $m \ge nT$.\footnote{For example, in the production-inventory problem from  \S\ref{sec:prodinv} with $E$ factories and $T$ periods, the number of decisions in each period is equal to the number of factories,  $n = E$, and the number of constraints is given by $m = E + E(T+1) + (T+1)$. More generally, the number of constraints will satisfy the inequality $m \ge nT$ in real-world applications in which there are nonnegativity constraints on each of the decisions in each period of the planning problem.}  
For this reason, the numbers of decision variables and constraints in the linear optimization problem~\eqref{prob:P_A_1} will typically grow quadratically with respect to the number of time periods, even when the active set satisfies $| \mathcal{A}| \ll n T^2$.  

 Our novel linear optimization reformulation of the optimization problem~\eqref{prob:ldr_A}, which is stated at the end of this subsection as \eqref{prob:P_A},  can be viewed as a simple modification of the linear optimization problem~\eqref{prob:P_A_1} that   exploits the fact that imposing sparsity constraints onto linear decision rules can have a side effect of inducing redundancy in the constraints of the optimization problem~\eqref{prob:ldr_A}. 
 Indeed, for any active set $\mathcal{A}$ and each period $s \in \{1,\ldots,T\}$, let us define the following set of tuples.
\begin{align*}
    \mathcal{K}^{\mathcal{A},s} \triangleq \bigcup_{i=0}^m \left\{  \left(b_{i,s}, \left( a_{i,t,j} \right)_{t,j: (t,s,j) \in \mathcal{A}}\right)  \right \}. 
\end{align*}
 For each period $s \in \{1,\ldots,T\}$, the cardinality of the set of tuples $K^{\mathcal{A},s} \triangleq | \mathcal{K}^{\mathcal{A},s}|$ can be interpreted as the number of unique  optimization problems $\max_{\zeta_s \in \mathcal{U}_s}  \{ ( - b_{i,s} + \sum_{t,j: (t,s,j) \in \mathcal{A}} a_{i,t,j} y_{t,s,j} ) \zeta_s  \}$ across all of the constraints $i \in \{0,\ldots,m\}$. The key observation that underpins our subsequent reformulation is that the total number of unique tuples $K^{\mathcal{A}} \triangleq \sum_{s=1}^{T} K^{\mathcal{A},s}$ is often proportional to the cardinality of the active set $\mathcal{A}$. In particular, for the production-inventory problem from \S\ref{sec:prodinv}, we have the following result.  \begin{proposition} \label{prop:productioninventory_reformulation}
For the production-inventory problem with no lead times ($\delta_e = 0$ for all $e \in [E]$), we have  $ K^{\mathcal{A}} \le  4 | \mathcal{A}| + ET + 5T + E + 1$. 
\end{proposition}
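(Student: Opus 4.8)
The plan is to write the production-inventory cost function in the general form~\eqref{line:cost}, read off all of its coefficients $a_{i,t,e}$ and $b_{i,s}$, and then bound $K^{\mathcal{A}}=\sum_{s=1}^T K^{\mathcal{A},s}$ by counting, for each stage $s$ separately, how many distinct data-tuples $\big(b_{i,s},(a_{i,t,e})_{t,e:\,(t,s,e)\in\mathcal{A}}\big)$ can occur as $i$ ranges over the constraints $\{0,\ldots,m\}$; by definition of $\mathcal{K}^{\mathcal{A},s}$, that number is exactly $K^{\mathcal{A},s}$. After padding the problem with the dummy stage $\bx_{T+1}$ (with $p_{T+1,e}=c_{T+1,e}=0$), the constraints of~\eqref{prob:example1:a}--\eqref{prob:example1:d} partition into six groups: (i) the objective $i=0$, where $a_{0,t,e}=c_{te}$ and $b_{0,s}=0$; (ii) the $E$ total-capacity constraints~\eqref{prob:example1:b}, where the constraint for factory $e$ has $a_{t,e'}=\mathbb{I}\{e'=e\}$ independent of $t$ and $b_s=0$; (iii) and (iv) the two sides of the production bounds $0\le x_{te}\le p_{te}$ from~\eqref{prob:example1:c}, where the constraint bounding a single $x_{te}$ has $a_{t',e'}=\pm\mathbb{I}\{(t',e')=(t,e)\}$ and $b_s=0$; and (v) and (vi) the two sides of the inventory constraints~\eqref{prob:example1:d}, where the constraint for warehouse period $t$ has $a_{t',e'}=\pm\mathbb{I}\{t'\le t\}$, $b_s=\pm\mathbb{I}\{2\le s\le t+1\}$ for $s\ge 2$, and the constant $V_{\textnormal{max}}-v_1$ (respectively $v_1-V_{\textnormal{min}}$) carried on $c_i$ or, equivalently, on $b_{i,1}$ since $\zeta_1=1$.

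Fixing a stage $s$ and writing $\mathcal{A}_s\triangleq\{(t,e):(t,s,e)\in\mathcal{A}\}$, so that $\sum_{s=1}^T|\mathcal{A}_s|=|\mathcal{A}|$, I would bound the number of distinct data-tuples contributed at stage $s$ by each group. Group (i) contributes exactly one tuple. Group (ii) contributes at most $1+\min(E,|\mathcal{A}_s|)$ tuples, because the data-tuple of the capacity constraint for factory $e$ is the indicator of ``column $e$'' restricted to $\mathcal{A}_s$, so distinct tuples correspond to the distinct factory indices appearing in $\mathcal{A}_s$ (at most $\min(E,|\mathcal{A}_s|)$ of them) together with possibly the all-zero tuple. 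Each of groups (iii) and (iv) contributes at most $1+|\mathcal{A}_s|$ tuples, since the data-tuple of the constraint bounding $x_{te}$ is either the all-zero tuple (when $(t,s,e)\notin\mathcal{A}$) or a tuple supported on the single coordinate $(t,e)$ of $\mathcal{A}_s$. The subtle case, which I expect to be the main obstacle, is that each of groups (v) and (vi) contributes at most $2+|\mathcal{A}_s|$ tuples.

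To see the bound for group (v) --- group (vi) is identical up to a sign --- note that the inventory constraints have coefficient patterns that are \emph{prefix indicators in both the decision index and the uncertainty index}: for $s\ge 2$, the data-tuple of the upper inventory constraint for warehouse period $t$ is $\big(\mathbb{I}\{t\ge s-1\},\,(\mathbb{I}\{t'\le t\})_{(t',e')\in\mathcal{A}_s}\big)$. As $t$ ranges over $1,\ldots,T$, every coordinate of this tuple is a nondecreasing $\{0,1\}$-valued step function of $t$; the vector part changes value only when $t$ crosses one of the at most $|\mathcal{A}_s|$ distinct decision indices present in $\mathcal{A}_s$, and the scalar part changes value at most once more (at $t=s-1$), so at most $2+|\mathcal{A}_s|$ distinct tuples arise (the stage $s=1$ is only easier, since there the scalar part is a constant). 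Adding the six per-group bounds gives $K^{\mathcal{A},s}\le 8+4|\mathcal{A}_s|+\min(E,|\mathcal{A}_s|)$, and summing over $s=1,\ldots,T$ using $\sum_s|\mathcal{A}_s|=|\mathcal{A}|$ and $\sum_s\min(E,|\mathcal{A}_s|)\le ET$ yields $K^{\mathcal{A}}\le 4|\mathcal{A}|+ET+\mathcal{O}(T)+\mathcal{O}(E)$. Sharpening the additive part to the stated $5T+E+1$ is then a matter of careful bookkeeping: the all-zero tuple is counted separately in the bounds for several of groups (ii)--(vi) but represents a single element of $\mathcal{K}^{\mathcal{A},s}$, so identifying it across groups recovers roughly $3T$ of the slack, and the few stages at which this identification fails --- which is precisely where the $E$ and the $1$ enter --- can be enumerated explicitly.
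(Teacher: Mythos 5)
Your proposal follows essentially the same route as the paper's proof: enumerate the tuples $\bigl(b_{i,s},(a_{i,t,e})_{t,e:(t,s,e)\in\mathcal{A}}\bigr)$ group by group (objective, capacity, production bounds, inventory bounds) and exploit the prefix/telescoping structure of the inventory constraints so that, within each stage $s$, the number of distinct inventory tuples is controlled by the number of warehouse periods $t$ at which some $(t,s,e)\in\mathcal{A}$ appears, which is exactly the paper's key observation leading to the $2|\mathcal{A}|$ term from those constraints. The only difference is bookkeeping: the paper counts all $E$ capacity tuples and the $\pm 1$ constant tuples outright and bounds the inventory partial sums by $2$ plus twice the number of active warehouse periods, landing directly on $4|\mathcal{A}|+ET+5T+E+1$, whereas your executed count gives $4|\mathcal{A}|+ET+8T$ and the sharpening via identifying the repeated all-zero tuple is only sketched; the slack you point to is indeed there, so this is a presentational rather than substantive difference.
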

Hence, the above proposition shows for production-inventory problems that the number of unique tuples satisfies $K^{\mathcal{A}} = \mathcal{O}(ET)$  when the active set satisfies $|\mathcal{A}|= \mathcal{O}(ET)$.

We now complete the development of our proposed reformulation  by demonstrating that \eqref{prob:ldr_A} can be rewritten as a linear optimization problem with $\mathcal{O}(K^{\mathcal{A}} + | \mathcal{A}|)$ decision variables and $\mathcal{O}(K^{\mathcal{A}} + m)$   constraints. 
We begin with the following lemma, which shows how the sets of tuples $\mathcal{K}^{\mathcal{A},1},\ldots, \mathcal{K}^{\mathcal{A},1}$ can be used to combine auxiliary decision variables in the linear optimization problem~\eqref{prob:P_A_1}:\begin{lemma}\label{lem:reform}
    If Assumption~\ref{ass:1} holds, then there exists an optimal solution for \eqref{prob:P_A_1} such that
        \begin{align*}
       (\bar{\omega}_{i,s}, \ubar{\omega}_{i,s}) = (\bar{\omega}_{i',s}, \ubar{\omega}_{i',s})
    \end{align*}
    for all $s \in [T]$ and for all $i,i' \in \{0,\ldots,m\}$ that map to the same tuple in $\mathcal{K}^{\mathcal{A},s}$, where we say that $i,i'$ map to the same tuple in $\mathcal{K}^{\mathcal{A},s}$ if and only if
    \begin{align*}
\left(b_{i,s}, \left( a_{i,t,j} \right)_{t,j: (t,s,j) \in \mathcal{A}}\right) = \left(b_{i',s}, \left( a_{i',t,j} \right)_{t,j: (t,s,j) \in \mathcal{A}}\right).
    \end{align*}
\end{lemma}
The key takeaway from Lemma~\ref{lem:reform} is that we can reduce the size of the linear optimization problem~\eqref{prob:P_A_1}  by combining auxiliary decision variables that are guaranteed  to be equal at an optimal solution. To apply this takeaway,   let the $k$th tuple in $\mathcal{K}^{\mathcal{A},s}$ for each $k \in \{1,\ldots,| \mathcal{K}^{\mathcal{A},s}|\} \equiv [K^{\mathcal{A},s}]$ and each period $s \in [T]$ be denoted by $
   ({b}^{\mathcal{A},s}_{k,s}, ( a^{\mathcal{A},s}_{k,t,j} )_{t,j: (t,s,j) \in \mathcal{A}})$, 
and let $\pi^{\mathcal{A},s}: \{0,\ldots,m\} \to [K^{\mathcal{A},s}]$  be defined for each period $s \in [T]$ as  the mapping that satisfies the following equality for each $i \in \{0,\ldots,m\}$.\looseness=-1
\begin{align*}
 \left(b_{i,s}, \left( a_{i,t,j} \right)_{t,j: (t,s,j) \in \mathcal{A}}\right) =   \left({b}^{\mathcal{A},s}_{ \pi^{\mathcal{A},s}(i),s}, \left( a^{\mathcal{A},s}_{ \pi^{\mathcal{A},s}(i),t,j} \right)_{t,j: (t,s,j) \in \mathcal{A}}\right). 
\end{align*}
The purpose of the mapping $\pi^{\mathcal{A},s}$ is to identify the index of the tuple in $\mathcal{K}^{\mathcal{A},s}$ that is mapped to by a given constraint $i \in \{0,\ldots,m\}$. Said another way, it follows from the terminology in Lemma~\ref{lem:reform} that if $s \in [T]$, then $i,i' \in \{0,\ldots,m\}$ map to the same tuple in $\mathcal{K}^{\mathcal{A},s}$ if and only if $\pi^{\mathcal{A},s}(i) = \pi^{\mathcal{A},s}(i')$.   With the above notation, we  observe that \eqref{prob:P_A_1} can be rewritten equivalently as
\begin{equation*}  
\begin{aligned}
&\underset{\substack{c_0 \in \R\\y_{t,s,j} \in \R :\;  \forall (t,s,j) \in \mathcal{A}\\ \bar{\bomega}_i, \ubar{\bomega}_i \in \R^T: \forall i \in \{0,\ldots,m\}}}{\textnormal{minimize}}&& c_0 \\
&\textnormal{subject to}&&\sum_{s =1}^T \left(  \bar{D}_s  \bar{\omega}_{i,s}  - \ubar{D}_s \ubar{\omega}_{i,s} \right) \le c_i  && \forall i \in \{0,\ldots,m\} \\
    &&& \bar{\omega}_{i,s} - \ubar{\omega}_{i,s} =   - b^{\mathcal{A},s}_{\pi^{\mathcal{A},s}(i),s} + \sum_{t,j: (t,s,j) \in \mathcal{A}} a^{\mathcal{A},s}_{\pi^{\mathcal{A},s}(i),t,j} y_{t,s,j}  && \forall i \in  \{0,\ldots,m  \}, \;s \in [T] \\
    &&&  \bar{\omega}_{i,s}, \ubar{\omega}_{i,s} \ge 0 && \forall i \in \{0,\ldots,m\}, \; s \in [T].
\end{aligned}
\end{equation*}
In particular, it follows from Lemma~\ref{lem:reform} that there exists an optimal solution for the above optimization problem for which the equality  $(\bar{\omega}_{i,s}, \ubar{\omega}_{i,s}) = (\bar{\omega}_{i',s}, \ubar{\omega}_{i',s})$ holds for all constraints $i,i' \in \{0,\ldots,m\}$ and all periods $s \in [T]$ that satisfy the equality  $\pi^{\mathcal{A},s}(i) = \pi^{\mathcal{A},s}(i')$. Therefore, the above linear optimization problem can be rewritten as 
\begin{equation}  \tag{P-$\mathcal{A}$}\label{prob:P_A}
\begin{aligned}
&\underset{\substack{c_0 \in \R\\y_{t,s,j} \in \R :\;  \forall (t,s,j) \in \mathcal{A}\\ \bar{\omega}^{\mathcal{A},s}_{k,s}, \ubar{\omega}^{\mathcal{A},s}_{k,s} \in \R: \forall s \in [T], k \in [K^{\mathcal{A},s}]}}{\textnormal{minimize}}&& c_0 \\
&\textnormal{subject to}&&\sum_{s =1}^T \left(  \bar{D}_s  \bar{\omega}^{\mathcal{A},s}_{\pi^{\mathcal{A},s}(i),s}  - \ubar{D}_s \ubar{\omega}^{\mathcal{A},s}_{\pi^{\mathcal{A},s}(i),s}  \right) \le c_i  && \forall i \in \{0,\ldots,m\} \\
    &&&\bar{\omega}^{\mathcal{A},s}_{k,s}  - \ubar{\omega}^{\mathcal{A},s}_{k,s}  =   - b^{\mathcal{A},s}_{k,s} + \sum_{t,j: (t,s,j) \in \mathcal{A}} a_{k,t,j}^{\mathcal{A},s} y_{t,s,j}  && \forall s \in [T], k \in [K^{\mathcal{A},s}] \\
    &&&  \bar{\omega}^{\mathcal{A},s}_{k,s} , \ubar{\omega}^{\mathcal{A},s}_{k,s}  \ge 0 && \forall s \in [T], k \in [K^{\mathcal{A},s}],
\end{aligned}
\end{equation} with the understanding that any optimal solution of~\eqref{prob:P_A} can be transformed into an optimal solution for \eqref{prob:P_A_1} by applying the  transformation $\left( \bar{\omega}_{i,s},\ubar{\omega}_{i,s} \right) \triangleq \left( \bar{\omega}^{\mathcal{A},s}_{\pi^{\mathcal{A},s}(i),s},\ubar{\omega}^{\mathcal{A},s}_{\pi^{\mathcal{A},s}(i),s} \right) $ for each period $s \in [T]$ and constraint $i \in \{0,\ldots,m\}$. We observe from inspection that the linear optimization problem~\eqref{prob:P_A} has $1 + | \mathcal{A}| + 2 K^{\mathcal{A}} = \mathcal{O}\left( K^{\mathcal{A}}  + | \mathcal{A}|  \right) $ decision variables and $1 + m + K^{\mathcal{A}} = \mathcal{O} \left( K^{\mathcal{A}}  +m  \right)$ constraints. The linear optimization problem~\eqref{prob:P_A} thus constitutes our novel linear optimization reformulation of the optimization problem~\eqref{prob:ldr_A}.

\subsection{Active Set Method} \label{sec:algorithm:activeset}
Equipped with our compact linear optimization reformulation of \eqref{prob:ldr_A} from the end of \S\ref{sec:algorithm:fixed_active_set}, we now formally present our algorithm for solving the optimization problem~\eqref{prob:ldr}. 
Our algorithm for finding an optimal solution for \eqref{prob:ldr} consists of iteratively  solving the optimization problem~\eqref{prob:ldr_A} with different choices of the active set $\mathcal{A}$. 

A line-by-line description of the algorithm is found in Algorithm~\ref{al:active_set}. 
Firstly, we initialize the active set with 
\begin{align}\label{eq:initial_as}
\mathcal{A} &= \left \{ (t,1,j): t \in [T], j \in [n] \right \} \cup  \left \{ (t,t,j): t \in [T], j \in [n] \right \}. 
\end{align} 
We refer to the initial active set defined above as the \emph{Markovian active set}. In the case of the Markovian active set,   the optimization problem~\eqref{prob:ldr_A} will yield linear decision rules where the decisions in each period $t \in [T]$ have the form 
$\bx_{t} = \by_{t1}  + \by_{tt} \zeta_t$. We then perform the following steps.

\SetKwBlock{Repeat}{repeat}{}
\begin{algorithm}
    \textbf{Initialization:} initialize the active set with the Markovian active set~\eqref{eq:initial_as}.
    
    \Repeat{
    {Solve the dual LP~\eqref{prob:D_A} with the current active set $\mathcal{A}$ (Step 1)\;}
    \If{The solution to \eqref{prob:D_A} satisfies the termination condition \eqref{line:terminate} (Step 2)}{break\;}
    {Update active set $\mathcal{A}$ by the procedure described in Step 3\;} 
    }
    \textbf{Output:} The KKT solution to the dual LP \eqref{prob:D_A}.
    \caption{An active set method for solving \eqref{prob:ldr}}
    \label{al:active_set}
\end{algorithm}

\subsubsection*{Step 1.} Given the active set $\mathcal{A}$, the first step of the current iteration of the active set method consists of solving the dual of the linear optimization problem~\eqref{prob:P_A}, which is given by
{\small
\begin{equation}  \label{prob:D_A} \tag{D-$\mathcal{A}$}
\begin{aligned}
&\underset{ \substack{
\lambda_0,\ldots,\lambda_m \in \R, \\
\zeta^{\mathcal{A},s}_{k,s} \in \R \; \forall s \in [T], k \in [K^{\mathcal{A},s}]}}{\textnormal{maximize}}&& - \sum_{i=1}^m c_i \lambda_i -  \sum_{s=1}^T  \sum_{k=1}^{K^{\mathcal{A},s}} b^{\mathcal{A},s}_{k,s} \zeta^{\mathcal{A},s}_{k,s}  \\
&\textnormal{subject to}&& \begin{aligned}[t]
&\sum_{k=1}^{K^{\mathcal{A},s}}  a^{\mathcal{A},s}_{k,t,j}  \zeta^{\mathcal{A},s}_{k,s}   = 0 && \forall (t,s,j) \in \mathcal{A} \\
&  \ubar{D}_s \left( \sum_{i \in \{0,\ldots,m\}: \pi^{\mathcal{A},s}(i) = k} \lambda_{i} \right)   \le \zeta^{\mathcal{A},s}_{k,s}  \le \bar{D}_s \left( \sum_{i \in \{0,\ldots,m\}: \pi^{\mathcal{A},s}(i) = k} \lambda_{i} \right)  && \forall  s \in [T], k \in [ K^{\mathcal{A},s}]\\
& \lambda_0 = 1 \\
&\lambda_i \ge 0 && \forall i \in \{1,\ldots,m\}. 
\end{aligned}
\end{aligned} 
\end{equation}
}%
We present how \eqref{prob:D_A} can be efficiently solved by removing redundant constraints in Appendix \ref{app:solveDA}. 
To simplify the discussion of our algorithm, we assume for now that the optimal objective value of \eqref{prob:D_A} is finite (we discuss how to handle infeasible cases in Appendix \ref{sec:algorithm:infeas}). In that case, it follows from strong duality for linear optimization  that the optimal objective value of \eqref{prob:D_A} is equal to the optimal objective value of \eqref{prob:ldr_A}. Moreover, we observe that an optimal linear decision rule for the optimization problem~\eqref{prob:ldr_A} can be obtained by extracting the KKT solutions of the equality constraints $\sum_{k=1}^{K^{\mathcal{A},s}}  a^{\mathcal{A},s}_{k,t,j}  \zeta^{\mathcal{A},s}_{k,s}   = 0$ for all $(t,s,j) \in \mathcal{A}$. Hence,  if the optimization problem~\eqref{prob:D_A} has a feasible solution, then a feasible solution for the optimization problem~\eqref{prob:ldr} with objective value equal to the optimal objective value of \eqref{prob:ldr_A} can be obtained from solving the linear optimization problem~\eqref{prob:D_A} using the simplex method.\footnote{If the linear optimization problem~\eqref{prob:D_A} is solved using the simplex algorithm, then the KKT solution can be extracted directly from the optimal basis, where $y_{t,s,j}$ is the KKT solution.  }

\subsubsection*{Step 2.} The next step of the current iteration of our algorithm consists of checking whether the linear decision rule extracted from the KKT solution of the linear optimization problem~\eqref{prob:D_A} is an 
 optimal solution of the optimization problem~\eqref{prob:ldr}. To perform this step, we utilize the following proposition. In the following proposition, and throughout the rest of the paper,  we define the fraction $0/0$ to be equal to $0$. 
   \begin{proposition} \label{prop:terminate}
Consider any optimal solution for \eqref{prob:D_A}. If the solution satisfies
\begin{align}
 \sum_{i=0}^m   a_{i,t,j} \left( \frac{\lambda_i}{\sum_{i' \in \{0,\ldots,m\}: \pi^{\mathcal{A},s}(i') = \pi^{\mathcal{A},s}(i)} \lambda_{i'}} \right) \zeta^{\mathcal{A},s}_{\pi^{\mathcal{A},s}(i),s} = 0 \quad \forall (t,s,j) \notin \mathcal{A}, \label{line:terminate}
 \end{align} 
then the linear decision rule extracted from the KKT solution of the linear optimization problem~\eqref{prob:D_A} is an 
 optimal solution of the optimization problem~\eqref{prob:ldr}. 
 \end{proposition}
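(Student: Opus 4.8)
The plan is to verify that the linear decision rule $\bar{\by}$ extracted from the KKT solution of~\eqref{prob:D_A}, when condition~\eqref{line:terminate} holds, is feasible for~\eqref{prob:ldr} and attains an objective value no larger than the optimal value of~\eqref{prob:ldr}; since~\eqref{prob:ldr_A} is a restriction of~\eqref{prob:ldr}, it suffices to exhibit a \emph{feasible} solution of~\eqref{prob:ldr} — not merely of~\eqref{prob:ldr_A} — whose cost equals the optimal value of~\eqref{prob:ldr_A}, which by the sandwich inequality $\textrm{val}\eqref{prob:ldr} \le \textrm{val}\eqref{prob:ldr_A}$ forces equality and optimality. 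The first step is to recall from~\S\ref{sec:algorithm:fixed_active_set} that $\bar{\by}$ together with the associated dual multipliers forms a primal-dual optimal pair for~\eqref{prob:P_A}/\eqref{prob:D_A}, so by strong duality $\bar{\by}$ is optimal for~\eqref{prob:ldr_A}; in particular it is feasible for~\eqref{prob:ldr_A}, hence for~\eqref{prob:ldr}, and its~\eqref{prob:ldr} objective value equals $\textrm{val}\eqref{prob:ldr_A}$. Thus the only thing left to prove is that $\bar{\by}$ is in fact an \emph{unconstrained} optimal solution of~\eqref{prob:ldr}, which amounts to showing $\textrm{val}\eqref{prob:ldr} = \textrm{val}\eqref{prob:ldr_A}$.

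The heart of the argument is a dual certificate. The idea is to build, from the optimal solution $(\lambda_i, \zeta^{\mathcal{A},s}_{k,s})$ of~\eqref{prob:D_A}, a feasible solution of the dual of the \emph{full} problem~\eqref{prob:ldr} (equivalently, of the robust counterpart~\eqref{prob:P_A_1} but with the active-set constraints $y_{t,s,j}=0$ for $(t,s,j)\notin\mathcal{A}$ \emph{removed}) whose objective value matches $\textrm{val}\eqref{prob:ldr_A}$. Concretely, I would "lift" the reduced dual variables back to per-constraint variables by setting $\zeta^{\textnormal{full}}_{i,s} \triangleq \big(\lambda_i \big/ \sum_{i': \pi^{\mathcal{A},s}(i')=\pi^{\mathcal{A},s}(i)} \lambda_{i'}\big)\,\zeta^{\mathcal{A},s}_{\pi^{\mathcal{A},s}(i),s}$ for every $i\in\{0,\ldots,m\}$ and $s\in[T]$ — this is exactly the quantity appearing in~\eqref{line:terminate}. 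One then checks that this lifted vector, paired with the same $\lambda_i$, satisfies all the dual constraints of the full LP: the box constraints $\ubar{D}_s(\cdot) \le \zeta^{\textnormal{full}}_{i,s} \le \bar{D}_s(\cdot)$ and the sign/normalization constraints transfer directly because the weights $\lambda_i/\sum\lambda_{i'}$ form a convex combination summing to one over each fiber of $\pi^{\mathcal{A},s}$; the equality constraints indexed by $(t,s,j)\in\mathcal{A}$ hold because aggregating $\zeta^{\textnormal{full}}_{i,s}$ over each fiber recovers $\zeta^{\mathcal{A},s}_{k,s}$ and those were feasible for~\eqref{prob:D_A}; and — crucially — the equality constraints indexed by $(t,s,j)\notin\mathcal{A}$, which are \emph{present in the full dual but absent from~\eqref{prob:D_A}}, hold precisely by hypothesis~\eqref{line:terminate}. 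Finally, a short computation shows the full-dual objective $-\sum_i c_i\lambda_i - \sum_s\sum_i b_{i,s}\zeta^{\textnormal{full}}_{i,s}$ telescopes, fiber by fiber, to the~\eqref{prob:D_A} objective $-\sum_i c_i\lambda_i - \sum_s\sum_k b^{\mathcal{A},s}_{k,s}\zeta^{\mathcal{A},s}_{k,s}$, so the two optimal values coincide; weak duality for the full LP then gives $\textrm{val}\eqref{prob:ldr} \ge \textrm{val}\eqref{prob:ldr_A}$, and combined with the trivial reverse inequality we conclude $\bar{\by}$ is optimal for~\eqref{prob:ldr}.

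The main obstacle I anticipate is purely bookkeeping rather than conceptual: one must be careful about the convention $0/0 = 0$ in~\eqref{line:terminate} and handle the degenerate case where $\sum_{i': \pi^{\mathcal{A},s}(i')=k}\lambda_{i'} = 0$ for some fiber $k$ — in that case the box constraints force $\zeta^{\mathcal{A},s}_{k,s}=0$, so the corresponding lifted variables are all zero and contribute nothing, and the aggregation identities still hold trivially. A secondary subtlety is making precise the passage between the robust problem~\eqref{prob:ldr}, its robust counterpart obtained via the inner-maximization duality of~\S\ref{sec:algorithm:fixed_active_set}, and the LP dual of that counterpart; I would state this chain of equivalences as a preliminary observation (it is the same derivation that produced~\eqref{prob:P_A_1} from~\eqref{prob:ldr_A}, now applied with $\mathcal{A}$ replaced by the full index set) so that the certificate argument can proceed cleanly at the level of finite-dimensional linear programming duality.
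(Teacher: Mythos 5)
Your proposal is correct and follows essentially the same route as the paper: the lifted variables $\zeta_{i,s} = \bigl(\lambda_i / \sum_{i':\pi^{\mathcal{A},s}(i')=\pi^{\mathcal{A},s}(i)}\lambda_{i'}\bigr)\zeta^{\mathcal{A},s}_{\pi^{\mathcal{A},s}(i),s}$ are exactly the transformation of the paper's Lemma~\ref{lem:transformation}, whose conditions (matching objective, active-set equalities, box and sign constraints) you re-derive, and hypothesis~\eqref{line:terminate} supplies the remaining equalities of \eqref{prob:D} so that $\textnormal{val}\eqref{prob:D}=\textnormal{val}\eqref{prob:D_A}$ and optimality of the extracted rule follows by the same duality sandwich. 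Your handling of the $0/0$ fibers and of the chain of LP equivalences matches what the paper leaves implicit.
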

 The above proposition provides the condition that we will use for terminating the algorithm.  
Namely, after solving the optimization problem~\eqref{prob:D_A}, our algorithm checks whether the optimal solution that was obtained from solving the optimization problem~\eqref{prob:D_A} satisfies line~\eqref{line:terminate}. If line~\eqref{line:terminate} is satisfied, then the algorithm terminates. Otherwise, our algorithm proceeds to Step 3. We note that if the active set satisfies $| \mathcal{A}| \ll nT^2$, then the equalities in line~\eqref{line:terminate} can be checked efficiently from the perspectives of computation time as well as computer memory (see Appendix~\ref{sec:evaluating_termination_condition} for implementation details). 
 
\subsubsection*{Step 3.}
 If the algorithm does not terminate in Step 2, then the last step of the current iteration of our algorithm consists of choosing the active set that will be used in the next iteration. To choose the new active set, we employ a heuristic that uses the optimal solution for the linear optimization problem~\eqref{prob:D_A} to construct a new active set by adding elements to the current  active set. In greater detail, let us define the following sets.
 \begin{align*}
 \mathcal{A}^{\neq} &\triangleq \left \{(t,s,j) \notin \mathcal{A}:  \quad  \sum_{i=0}^m   a_{i,t,j} \left( \frac{\lambda_i}{\sum_{i' \in \{0,\ldots,m\}: \pi^{\mathcal{A},s}(i') = \pi^{\mathcal{A},s}(i)} \lambda_{i'}} \right) \zeta^{\mathcal{A},s}_{\pi^{\mathcal{A},s}(i),s} \neq 0 \right \};\\
  \mathcal{A}^{=} &\triangleq \left \{(t,s,j) \notin \mathcal{A}:  \quad  \sum_{i=0}^m   a_{i,t,j} \left( \frac{\lambda_i}{\sum_{i' \in \{0,\ldots,m\}: \pi^{\mathcal{A},s}(i') = \pi^{\mathcal{A},s}(i)} \lambda_{i'}} \right) \zeta^{\mathcal{A},s}_{\pi^{\mathcal{A},s}(i),s} = 0 \right \}.
 \end{align*}
  The set $ \mathcal{A}^{\neq}$ can be interpreted as the set of all tuples $(t,s,j) \notin \mathcal{A}$ that do not satisfy the equality in line~\eqref{line:terminate}, whereas the set $\mathcal{A}^=$ can be interpreted as the set of all tuples $(t,s,j) \notin \mathcal{A}$ that do satisfy the equality in line~\eqref{line:terminate}. To make sense of the above definitions, we offer the following remarks.
  \begin{remark}
   It follows from construction that $\mathcal{A} \cup \mathcal{A}^{\neq} \cup \mathcal{A}^= = \{(t,s,j): t,s \in [T], j \in [n], s \le t \}$.
   \end{remark}
  \begin{remark}
It follows from the fact that the algorithm did not terminate in Step 2 that $\mathcal{A}^{\neq} \neq \emptyset$. 
\end{remark}
\begin{remark}
The sets $\mathcal{A}^{\neq}, \mathcal{A}^=$ are computed for free during Step 2. 
\end{remark}
In view of the above definitions of the sets $\mathcal{A}^{\neq}$ and $\mathcal{A}^=$, we now state our heuristic for  choosing the active set that will be used in the next iteration. Specifically, our heuristic chooses the active set that will be used in the next iteration as $\mathcal{A} \cup \widetilde{\mathcal{A}}$ for some nonempty subset $\widetilde{\mathcal{A}} \subseteq \mathcal{A}^{\neq}$, and the motivation behind our heuristic is given by the following proposition. 
\begin{proposition} \label{prop:monotone} 
Consider any optimal solution of the linear optimization problem~\eqref{prob:D_A}, and let $\mathcal{A}^{\neq}$ and $\mathcal{A}^=$ be the subsets of tuples corresponding to that optimal solution.  If the active set in the next iteration is chosen to be $\mathcal{A} \cup \widetilde{\mathcal{A}}$ for some subset $\widetilde{\mathcal{A}} \subseteq \mathcal{A}^=$, then the optimal objective value of \textnormal{(LDR-$\mathcal{A} \cup \widetilde{\mathcal{A}}$)} will be equal to the optimal objective value of \eqref{prob:ldr_A}. 
\end{proposition}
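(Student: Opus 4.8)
The plan is to bracket the optimal value of \textnormal{(LDR-$\mathcal{A}\cup\widetilde{\mathcal{A}}$)}, which we denote by $v(\mathcal{A}\cup\widetilde{\mathcal{A}})$, between the optimal value $v(\mathcal{A})$ of \eqref{prob:ldr_A} on both sides. One inequality is immediate: since \eqref{prob:ldr_A} and \textnormal{(LDR-$\mathcal{A}\cup\widetilde{\mathcal{A}}$)} differ only in which linear decision rule parameters are clamped to zero, every feasible point of \eqref{prob:ldr_A} extends, by setting $y_{t,s,j}=0$ for $(t,s,j)\in\widetilde{\mathcal{A}}$, to a feasible point of \textnormal{(LDR-$\mathcal{A}\cup\widetilde{\mathcal{A}}$)} with the same objective value $c_0$; hence $v(\mathcal{A}\cup\widetilde{\mathcal{A}})\le v(\mathcal{A})$.

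For the reverse inequality I would exhibit a feasible solution of the dual \textnormal{(D-$\mathcal{A}\cup\widetilde{\mathcal{A}}$)} whose objective value equals $v(\mathcal{A})$, and then invoke weak duality for the pair \textnormal{(LDR-$\mathcal{A}\cup\widetilde{\mathcal{A}}$)}--\textnormal{(D-$\mathcal{A}\cup\widetilde{\mathcal{A}}$)}. Starting from the given optimal solution $(\lambda_i,\zeta^{\mathcal{A},s}_{k,s})$ of \eqref{prob:D_A}, I observe that enlarging the active set refines each tuple partition $\mathcal{K}^{\mathcal{A},s}$ into $\mathcal{K}^{\mathcal{A}\cup\widetilde{\mathcal{A}},s}$: an old class $k$ in period $s$ splits into new classes $k'$, and the aggregated multiplier $\Lambda_k\triangleq\sum_{i:\pi^{\mathcal{A},s}(i)=k}\lambda_i$ splits into $\Lambda_{k'}\triangleq\sum_{i:\pi^{\mathcal{A}\cup\widetilde{\mathcal{A}},s}(i)=k'}\lambda_i$. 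I would then disaggregate the dual solution proportionally, keeping $\lambda$ unchanged and setting $\zeta^{\mathcal{A}\cup\widetilde{\mathcal{A}},s}_{k',s}\triangleq(\Lambda_{k'}/\Lambda_k)\,\zeta^{\mathcal{A},s}_{k,s}$ for each new class $k'$ refining $k$, with the convention $0/0=0$. Since $b_{i,s}$ and the coefficients $a_{i,t,j}$ with $(t,s,j)\in\mathcal{A}$ are constant across each old class, a direct check shows that this point has the same dual objective, still satisfies $\lambda_0=1$, $\lambda_i\ge 0$, the box bounds $\ubar{D}_s\Lambda_{k'}\le\zeta^{\mathcal{A}\cup\widetilde{\mathcal{A}},s}_{k',s}\le\bar{D}_s\Lambda_{k'}$, and the equality constraints attached to the old tuples $(t,s,j)\in\mathcal{A}$; the only constraints that are not automatic are those attached to the newly added tuples $(t,s,j)\in\widetilde{\mathcal{A}}$.

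The concluding step is to recognize that these remaining equalities are exactly the defining conditions of $\mathcal{A}^{=}$. Substituting the disaggregated variables and collapsing the sum over the refined partition, the \textnormal{(D-$\mathcal{A}\cup\widetilde{\mathcal{A}}$)} equality attached to a tuple $(t,s,j)\in\widetilde{\mathcal{A}}$ reduces to $\sum_{i=0}^m a_{i,t,j}\left( \frac{\lambda_i}{\sum_{i':\pi^{\mathcal{A},s}(i')=\pi^{\mathcal{A},s}(i)}\lambda_{i'}} \right)\zeta^{\mathcal{A},s}_{\pi^{\mathcal{A},s}(i),s}=0$, which is precisely the condition appearing in the definition of $\mathcal{A}^{=}$ in \S\ref{sec:algorithm:activeset}. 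Because $\widetilde{\mathcal{A}}\subseteq\mathcal{A}^{=}$ by hypothesis, every such equality holds, so the disaggregated point is feasible for \textnormal{(D-$\mathcal{A}\cup\widetilde{\mathcal{A}}$)}; its objective value coincides with that of the original optimal \eqref{prob:D_A} solution, which equals $v(\mathcal{A})$ by strong duality for \eqref{prob:ldr_A}. Weak duality then gives $v(\mathcal{A})\le v(\mathcal{A}\cup\widetilde{\mathcal{A}})$, and combining with the first paragraph yields $v(\mathcal{A}\cup\widetilde{\mathcal{A}})=v(\mathcal{A})$.

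I expect the only genuine difficulty to be the bookkeeping around the changing tuple partitions $\mathcal{K}^{\cdot,s}$ and maps $\pi^{\cdot,s}$: one must verify carefully that proportional disaggregation of the $\zeta$-variables respects the box bounds and leaves both the objective and the old equality constraints untouched, and that collapsing the sum over the refined partition reproduces the $\mathcal{A}^{=}$ expression verbatim. A clean way to streamline this is to argue through the ``uncompressed'' robust-counterpart dual, in which there is one multiplier $\zeta_{i,s}$ per constraint $i\in\{0,\ldots,m\}$ and period $s$, linked to the compressed variables by $\zeta^{\mathcal{A},s}_{k,s}=\sum_{i:\pi^{\mathcal{A},s}(i)=k}\zeta_{i,s}$; in that formulation the dual of \textnormal{(LDR-$\mathcal{B}$)} has the same variables and objective for every active set $\mathcal{B}$ and differs from the dual of \eqref{prob:ldr_A} only by the extra equalities $\sum_{i=0}^m a_{i,t,j}\zeta_{i,s}=0$ for $(t,s,j)\in\mathcal{B}\setminus\mathcal{A}$, so the disaggregation above is nothing more than the canonical choice $\zeta_{i,s}=\left( \frac{\lambda_i}{\sum_{i':\pi^{\mathcal{A},s}(i')=\pi^{\mathcal{A},s}(i)}\lambda_{i'}} \right)\zeta^{\mathcal{A},s}_{\pi^{\mathcal{A},s}(i),s}$.
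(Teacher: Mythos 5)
Your proposal is correct and follows essentially the same route as the paper: the key step in both is the canonical proportional disaggregation of the optimal dual solution of \eqref{prob:D_A} (the paper's Lemma~\ref{lem:transformation}), the observation that the only new dual constraints for the enlarged active set are exactly the defining equalities of $\mathcal{A}^=$, and LP duality to transfer the objective value to \textnormal{(LDR-$\mathcal{A}\cup\widetilde{\mathcal{A}}$)}. In fact, your closing remark about arguing through the ``uncompressed'' robust-counterpart dual with one $\zeta_{i,s}$ per constraint is precisely how the paper writes the proof, passing through \eqref{prob:D_A_1} and the dual \textnormal{(D-$\mathcal{A}\cup\widetilde{\mathcal{A}}$')}.
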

To make sense of the above proposition, we recall that the goal of our algorithm is to find an active set $\mathcal{A}$ for which the optimal objective value of \eqref{prob:ldr_A} is equal to the optimal objective value of \eqref{prob:ldr}. Moreover, we recall that the optimal objective value of \eqref{prob:ldr_A} (which is equal to the optimal objective value of \eqref{prob:D_A}) is always greater than or equal to the optimal objective value of \eqref{prob:ldr}. In view of those recollections, Proposition~\ref{prop:monotone} shows that choosing an active set in the next iteration of algorithm that includes elements from $\mathcal{A}^{\neq}$ is imperative for decreasing the optimal objective value of the linear optimization problem~\eqref{prob:D_A}.  This motivates our heuristic for choosing the active set that will be used in the next iteration to be  $\mathcal{A} \cup \widetilde{\mathcal{A}}$ for some nonempty subset $\widetilde{\mathcal{A}} \subseteq \mathcal{A}^{\neq}$.\looseness=-1

\begin{remark}In our implementation of the algorithm,  we choose the subset of tuples by initializing $\widetilde{\mathcal{A}} \leftarrow \emptyset$. We then iterate over each $t \in [T]$ and $j \in [n]$, draw a random period $s \in \{1,\ldots,s\}$ uniformly over all of the periods $s$ that satisfy $(t,s,j) \in \mathcal{A}^{\neq}$, and append $\widetilde{\mathcal{A}} \leftarrow \widetilde{\mathcal{A}} \cup \{(t,s,j) \}$. By following this procedure, the active set will increase by at most $Tn$ components in each iteration. 
\end{remark}
\begin{remark}
To prevent the active set from becoming unnecessarily  large, we  employ a classical removal strategy from the cutting plane literature (Deletion Rule II  from \cite{elzinga1975central}). To employ this removal strategy, we maintain a record $v_{t,s,j} \in \R$ for each tuple $(t,s,j) \in \mathcal{A}$ of the optimal objective value of the optimization problem~\eqref{prob:ldr_A} in the iteration in which the tuple $(t,s,j)$ was added into the active set. At the end of Step 3 of each iteration of our algorithm, we remove each tuple $(t,s,j) \in \mathcal{A}$ from the active set for which the optimal objective value of \eqref{prob:D_A} is strictly less than $v_{t,s,j}$ and the dual variable $y_{t,s,j}$ associated with the equality constraint  $\sum_{k=1}^{K^{\mathcal{A},s}}  a^{\mathcal{A},s}_{k,t,j}  \zeta^{\mathcal{A},s}_{k,s}   = 0 $ is equal to zero. It is established by \cite{elzinga1975central} that the active set method with this removal strategy is guaranteed to converge to an optimal solution for \eqref{prob:ldr} after finitely many iterations.  

\end{remark}

\looseness=-1

\section{Numerical Experiments} \label{sec:experiment}

In this section, we showcase the practical value of our  reformulation technique and active set method from \S\ref{sec:algorithm} for computing linear decision rules for robust optimization problems with hundreds of time periods and as many as fifty decision and state variables in each time period. Additional numerical experiments that investigate the bounds from  Theorem~\ref{thm:main} can be found in Appendix~\ref{appx:numerical:implications}. All experiments were conducted on a Apple M3 Max chip with all threads and 128 GB of memory.

We perform numerical experiments  in this section that generalize those from  \cite{ben2004adjustable,de2016impact}.  Specifically, our numerical experiments focus on instances of \eqref{prob:ldr_1} in which the customer demand and production costs of a new product follow a cyclic  pattern due to seasonality over a selling horizon of one year. 
Given a discretization of the selling season into $T$ stages, the  customer demand in each stage $t \in \{2,\ldots,T+1\}$ is given by
\begin{align}
    \phi_t &= 1 + 0.5 \sin\left(\frac{2 \pi (t-2)}{T} \right), & \theta_t &= 0.2, &  \mathcal{U}_t &=  \left[ \frac{1000  (1 - \theta)  \phi_t}{T / 24}, \frac{1000 (1+\theta)  \phi_t}{T/24}  \right], \label{prob:experiment_setup:1}
\end{align}
where parameters $\phi_t$ and $\theta_t$ are interpreted, respectively, as a phase parameter, which captures seasonality, and a demand parameter, which controls the radius of the uncertainty sets. Given $E$ factories with zero lead time ($\delta_e = 0$ for all $e \in [E]$) available to the firm, 
the production costs and capacities for each stage $t \in [T]$ and each factory $e \in [E]$ are 
\begin{align}
    c_{te} &= \left(1 + \frac{e-1}{E-1} \right) \phi_t, & p_{te} &= \frac{567}{\left( T/24\right)\left(E/3 \right)}, & Q_e &= \frac{13600}{E / 3},\label{prob:experiment_setup:2}
\end{align}
    and the capacities and initial inventory at the central warehouse are 
    \begin{gather} V_{\text{min}} = 500, \quad V_{\text{max}} = 2000, \quad v_1 = 500.\label{prob:experiment_setup:3}
\end{gather}
In the case of $T=24$ and $E =3$, our parameters are equivalent to those of \cite[\S 5]{ben2004adjustable} and \cite[Table 1]{de2016impact}. We follow this setup from \cite{ben2004adjustable,de2016impact} to capture a realistic setting for the parameters of the problem, and our generalization of the setup allows us to explore different choices of $T$ and $E$.

Our analysis here focuses on the regime where the number of periods $T$, in which case the robust optimization problem serves as an approximation of a continuous-review ordering system, and the number of factories $E$ grow large.\footnote{We note for this application that the dimension of the state and decision spaces in each period scale linearly in the number of  factories. Specifically, for the production-inventory problem, the state space is determined by the inventory at the warehouse and the cumulative production quantity at each factory in each time period. The decisions include the production quantity at each factory in each time period.}   For this regime, we compare the practical efficiency of our reformulation technique and active set method  from \S\ref{sec:algorithm}  to the practical efficiency of using the robust counterpart technique. In our implementation of the active set method, we apply all of the improvements to the active set method from Appendix~\ref{sec:algorithm:improvements} and solve \eqref{prob:D_A} in each iteration with Gurobi using the dual simplex method.  In our implementation of the robust counterpart, we solve the dual linear optimization formulation of the robust counterpart of \eqref{prob:ldr_1}  (see problem~\eqref{prob:D} in Appendix~\ref{appx:D}) with Gurobi  using the barrier method.\footnote{We here present the comparison with the Gurobi barrier method because it is the most effective algorithm of Gurobi in numerical performance. Comparisons of computation times for solving the robust counterpart with primal simplex, dual simplex,  barrier method, and the HiGHS implementation of PDLP} are found in Appendix~\ref{appx:numerical:lpcomparison}.  

In Figure~\ref{fig:active_set_fixed_E}, we compare the computation time of our  reformulation technique coupled with the active set method to the  computation time of the robust counterpart technique for approximately solving production-inventory problems with linear decision rules.  These experiments focus on the scalability of these methods with $E=5$ factories and $T \in \{48,96,144,192,240 \}$ time periods. Figure~\ref{fig:active_set_fixed_E} reports on the computation time for the active set method to obtain feasible linear decision rules that are within 10\%, 1\%, and 0.1\% of optimal. To perform a direct comparison, we terminate the barrier method when the primal-dual gap reaches 10\%, 1\%, and 0.1\%. To make the experimental setup more favorable to the barrier method, we  disable the crossover functionality of the barrier method to reduce its computation time.\footnote{Since Gurobi utilizes infeasible-interior-point method in their barrier implementation, with such tolerance level, it returns neither a primal feasible solution nor a dual feasible solution to the linear optimization problem obtained by taking the robust counterpart. Rather, the barrier method only produces bounds on the optimal objective value of the robust counterpart and near feasible solutions. In contrast, our approach outputs a primal feasible solution whenever \eqref{prob:D_A} has an optimal solution (see Appendix~\ref{sec:algorithm:infeas}). In all numerical experiments shown in this subsection, \eqref{prob:D_A} provided an optimal solution in each iteration of the active set method. }  Optimality gaps for the active set method are obtained by comparing the objective value of the  feasible linear decision rules obtained by the active set method to the optimal objective value obtained by solving the robust counterpart to optimality.\looseness=-1

The results of these experiments reveal that our reformulation technique coupled with the active set method offers significant improvements in computation time compared to the robust counterpart technique. Indeed, for problems with $T = 240$ time periods and $E = 5$ factories, our approach is over 170x faster  in computing linear decision rules that are within 10\% of optimal, and over 32x faster in computing linear decision rules that are within 1\% of optimal. %our active set method found a feasible linear decision rule with a robust objective value that is within 10\% of optimal in fewer than 3 seconds, and it found a feasible linear decision rule that is within 1\% of optimal in fewer than 13 seconds. In contrast, the barrier method required than 400 seconds to obtain a primal-dual gap that is less than 10\%. 
More broadly, the computation times for our  reformulation technique and active set method are a very small fraction of the  computation times for the  robust counterpart for all problem sizes, and the gap in computation times becomes increasingly pronounced as the number of time periods increases. %These experiments thus demonstrate that exploiting sparsity guarantees can significantly reduce  the computation time in order to obtain near-optimal linear decision rules for dynamic robust optimization problems with large time horizons. 

In Figure~\ref{fig:active_set_fixed_T}, we show the relationship between computation time and objective value  of linear decision rules obtained by our  reformulation technique and active set method in experiments with $T=240$ time periods and $E \in \{10,20,30,40,50\}$ factories. The takeaways from this figure are three-fold. First, we observe from Figure~\ref{fig:active_set_fixed_T} that  sparse linear decision rules with the  Markovian active set (see \S\ref{sec:algorithm:activeset})  are not optimal in general, as the objective value of the linear decision rules found by the active set method decreases as the number of iterations increases. Second, Figure~\ref{fig:active_set_fixed_T} shows that our  reformulation technique and active set method scales to applications with high-dimensional state spaces and hundreds of time periods. In particular, the  large numbers of factories here leads to a number of decision and state variables that prevent methods such as robust dual dynamic programming~\cite{georghiou2019robust} from being applied.\footnote{Numerical experiments from \cite[p.827-p.828]{georghiou2019robust} suggest that the robust dual dynamic programming technique scales gracefully with respect to the number of time periods but scales slowly with respect to the number of decision and state variables in each time period.} Third, the practical efficiency of our approach is particularly notable given that the size of the linear optimization problem obtained by the robust counterpart for $T = 240$ time periods and $E = 50$ factories is well beyond the capabilities of classic linear optimization solvers. Indeed, for this problem setting, the (primal) linear optimization problem obtained by the robust counterpart technique would require 13,281,993 decision variables to represent parameters of the linear decision rule and the auxiliary variables, and would require 5,936,502 constraints and 250,895,943 nonzeros. 
\begin{figure}[t]
\FIGURE{\centering \includegraphics[width=1\linewidth]{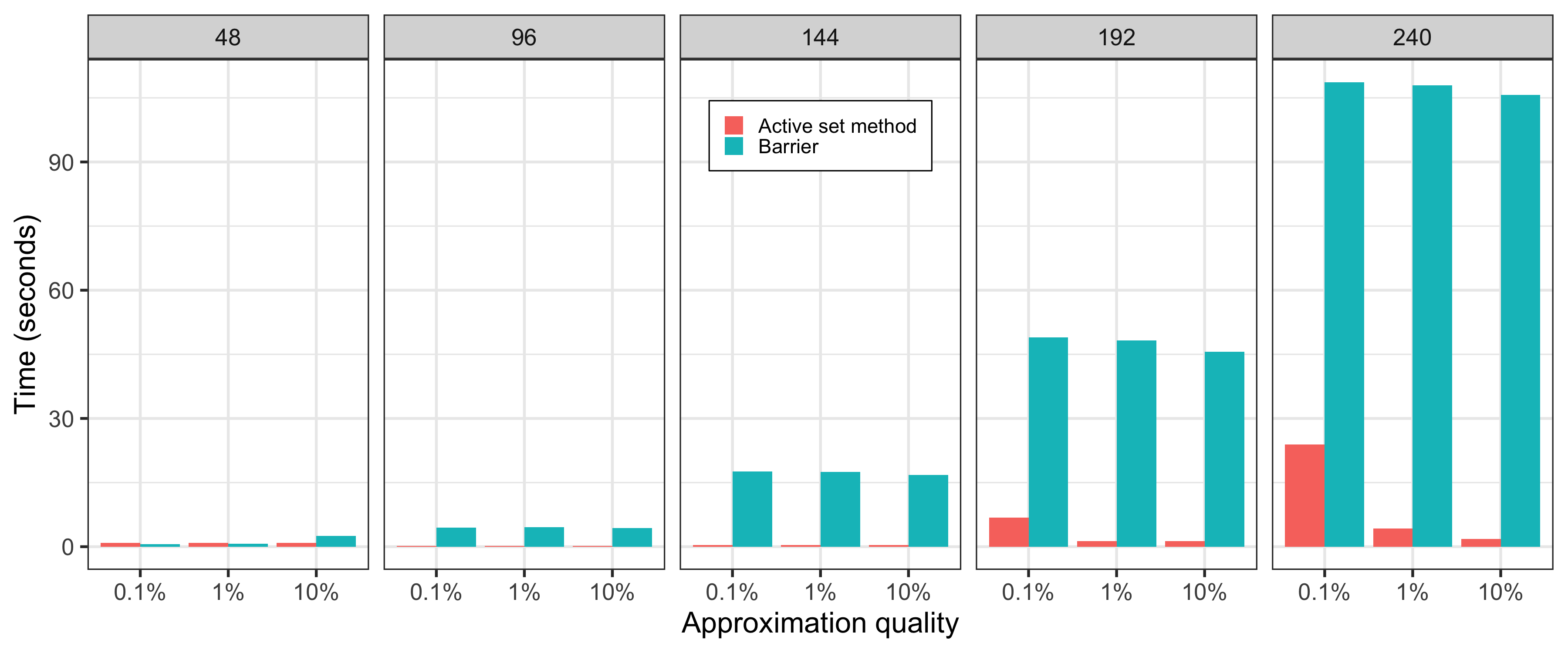} }
{Computation times for active set method and robust counterpart with $E = 5$ factories. \label{fig:active_set_fixed_E}}
   {Results shown for experiments with $E=5$ factories and $T \in \{48,96,144,192,240\}$ time periods. Blue bars show the computation times (in seconds) for the barrier method applied to the robust counterpart to have a primal dual gap that is within 10\%, 1\%, and 0.1\%. Barrier method is run without performing crossover, and so the barrier does not return a primal or dual feasible solution. Red bars show the computation time (in seconds) for the active set method to obtain a feasible linear decision rule that is within 10\%, 1\%, and 0.1\% of optimal. Optimality gaps are obtained by comparing the objective value of the linear decision rules obtained by the active set method to the optimal objective value obtained by solving the robust counterpart to optimality.  }
\end{figure}

\begin{figure}[t]
\FIGURE{\centering \includegraphics[width=1\linewidth]{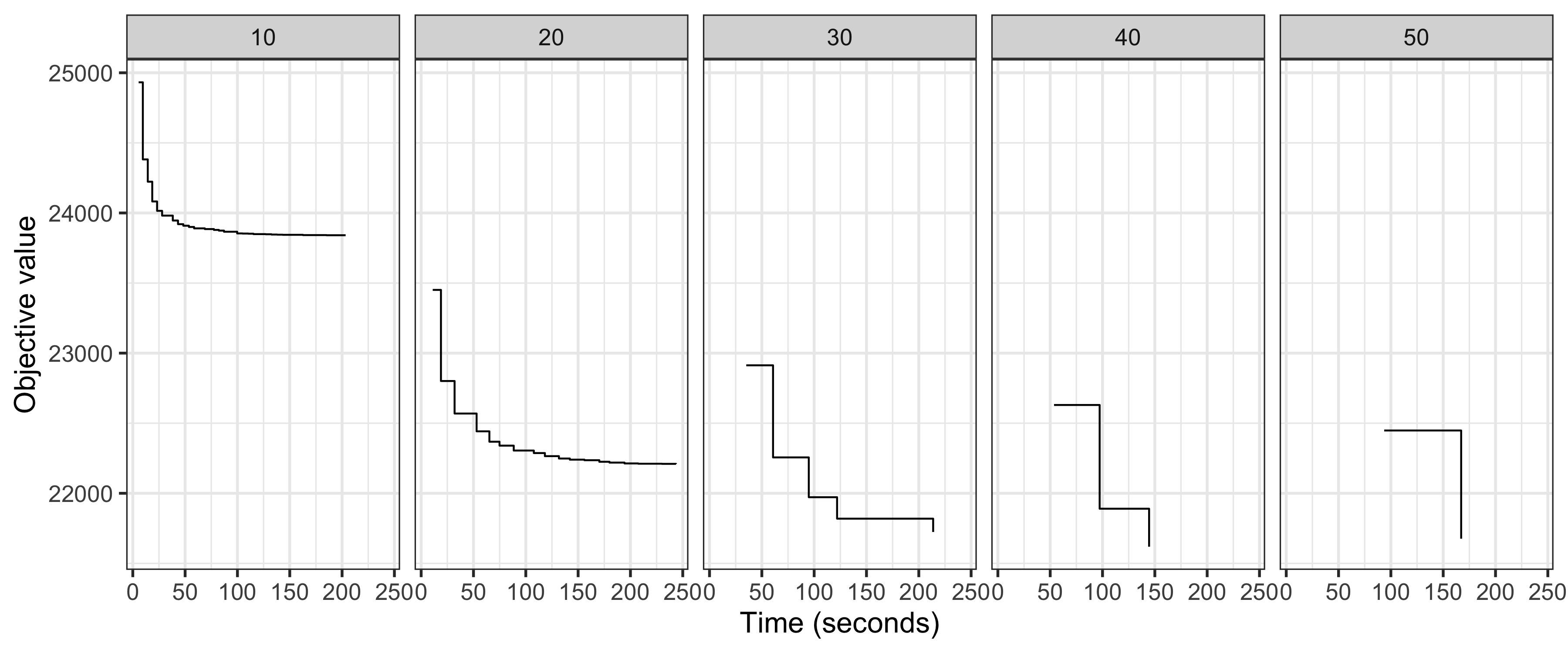} }
{Objective value and computation time for active set method with $T = 240$ time periods. \label{fig:active_set_fixed_T}}
   {Results shown for experiments with $T=240$ time periods and  $E \in \{10,20,30,40,50\}$ factories. Black lines show the objective value of the linear decision rule found by the active set method as a function of computation time.  }
\end{figure}

\section{Extensions and Limitations}\label{sec:extensions}

The proof techniques developed in this paper for establishing the sparsity of optimal linear decision rules (see \S\ref{sec:main_result}) are tailored for the class of production-inventory problems from \S\ref{sec:prodinv} with box uncertainty sets. In this section, we show that our proof techniques from \S\ref{sec:main_result} can also be extended to establish sparsity guarantees for a class of dynamic newsvendor problems with box uncertainty sets from \cite{ben2005retailer,bertsimas2010optimality,iancu2013supermodularity} and to the class of production-inventory problems from \S\ref{sec:prodinv} with non-box uncertainty sets.  We emphasize that while
the two extensions in this section (Theorems~\ref{thm:3} and \ref{thm:nonbox}) share the high-level proof roadmap of Theorem~\ref{thm:main} stated in Section \ref{sec:main_result}, the proof details, more specifically, the construction of the \eqref{line:type1} and \eqref{line:type2} constraints in Lemma \ref{lem:2} and the reformulation of these constraints to (S-1) to (S-3) in Lemma \ref{lem:zeros}, require individual analysis. In particular, the main technical difficulty in applying this analysis lies in the reformulation step that converts a large amount of the constraints \eqref{line:type2} to the constraints (S-2) type, which do not contribute to nonzero entries in the solution, and such a reformulation generally requires careful exploration of the problem structures. The broadest class of robust optimization problems that possess sparse optimal linear decision rules remains an open question, and we note that generalizations from this section and the production-inventory problem from \S\ref{sec:prodinv} all are limited to continuous decision variables and polyhedral cost functions.   Nevertheless, we hope the developments in this paper inspire and aid the robust optimization community in identifying more examples of robust optimization problems where the existence of sparse optimal solutions is guaranteed. 

%In this section, we show that our proof techniques from \S\ref{sec:main_result} can be extended  to establish  sparsity guarantees for other classes of robust optimization problems. These extensions thus demonstrate that the sparsity guarantees and proof techniques developed in this paper are not exclusive to the class of production-inventory problems from Ben-Tal et al.~\cite{ben2004adjustable}, and they provide a starting point for using our proof techniques to establish sparsity guarantees for other applications.   

\paragraph{Dynamic Newsvendor Problem. } 
A significant research effort in the robust optimization literature has been dedicated to {dynamic newsvendor problems} with box uncertainty sets; see  \cite{ben2005retailer,bertsimas2010optimality,iancu2013supermodularity}. This class of dynamic newsvendor problems is characterized by a single factory and nonlinear convex cost functions which capture the holding and backorder costs for inventory at the warehouse. Specifically, the cost function for these dynamic newsvendor problems is given by 
\begin{align} 
%\begin{aligned}
C(x_1,\ldots,x_T, \zeta_1,\ldots,\zeta_{T+1}) =  &  \sum_{t=1}^T \left(  c_{t} x_{t} +  h_t \left[ v_1 + \sum_{s=1}^t x_{s} - \sum_{s=2}^{t+1} \zeta_s \right]^+  + b_t \left[ -v_1 - \sum_{s=1}^t x_{s} + \sum_{s=2}^{t+1} \zeta_s \right]^+ \right) \tag{2a} \label{prob:example3:a} \\
\textnormal{subject to}\quad &0 \le x_{t} \le  p_{t} \quad \forall  t \in [T],  \tag{2b} \label{prob:example3:b}
%\end{aligned}
\end{align}
 where the firm begins at the start of the selling season with an initial inventory  of $v_1$ units of product, and, in each stage $t \in [T]$, the firm can decide to produce an additional $x_{t} \in [0,p_{t}]$ units of product from a single  factory at a cost of $c_{t}$ per unit. The customer demands for the product are denoted by  $\zeta_2,\ldots,\zeta_{T+1} \in \R$, and  the holding and backorder costs for the inventory at the end of each period $t \in [T]$ are given by $h_t$ and $b_t$. 

A fundamental result for this class of dynamic newsvendor problems  is that if Assumption~\ref{ass:1} holds, then the optimal objective values for \eqref{prob:main} and \eqref{prob:ldr} with  cost function~\eqref{prob:example3:a}-\eqref{prob:example3:b} are equal, that is, linear decision rules are optimal control policies  \cite[Theorem 3.1]{bertsimas2010optimality}. However, with the exception of guarantees on whether of the parameters of optimal linear decision rules are positive or negative (\cite[Proposition 5.1]{bertsimas2010optimality}, \cite[Lemma 5]{iancu2013supermodularity}), the structure of optimal linear decision rules for the production quantities in dynamic newsvendor problems has been unknown. Using our proof techniques from  \S\ref{sec:main_result}, we establish  for the first time that optimal linear decision rules for the class of dynamic newsvendor problems are not only optimal for \eqref{prob:main}; they are also sparse.
\begin{theorem}\label{thm:3}
Consider a cost function of the form \eqref{prob:example3:a}-\eqref{prob:example3:b} and let   Assumption~\ref{ass:1} hold. Then there exists an optimal solution $\bar{\by}$ for \eqref{prob:ldr} that satisfies $\| \bar{\by} \|_0 \le 10+12T.$
\end{theorem}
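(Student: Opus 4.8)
The plan is to follow the same three-step route used to prove Theorem~\ref{thm:main}: reformulate the dynamic newsvendor problem so that its cost function falls into the class~\eqref{line:cost}, invoke Lemmas~\ref{lem:1} and~\ref{lem:2} to reduce the question to the sparsity of the unique solution of a decomposable linear system, and then massage that system into the form of Lemma~\ref{lem:zeros}.

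First I would reformulate \eqref{prob:ldr} for the cost function~\eqref{prob:example3:a}--\eqref{prob:example3:b} into the canonical epigraph form~\eqref{prob:ldr_b}. The only nonlinearity in~\eqref{prob:example3:a} is the per-period term $h_t[a_t]^+ + b_t[-a_t]^+$, where $a_t \triangleq v_1 + \sum_{s=1}^t x_s - \sum_{s=2}^{t+1}\zeta_s$, and since at most one of the two positive parts is active this equals $\max\{h_t a_t,\,-b_t a_t\}$. Introducing an epigraph decision variable $w_t$ per period, the newsvendor problem becomes an instance of~\eqref{line:cost} with $\mathcal{O}(1)$ decision variables per stage (the production quantity $x_t$ together with $w_t$, using the same dummy-stage device as for production-inventory problems to handle the extra uncertain variable $\zeta_{T+1}$) and $m = 4T$ constraints: $w_t \ge h_t a_t$, $w_t \ge -b_t a_t$, $x_t \le p_t$ and $x_t \ge 0$ for each $t \in [T]$. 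I would then argue that the optimal value of this linear reformulation equals that of~\eqref{prob:ldr}: the per-period epigraph reformulation lies between~\eqref{prob:ldr} and~\eqref{prob:main}, and the latter two share the same optimal value for the newsvendor by~\cite[Theorem 3.1]{bertsimas2010optimality}; moreover the production-rule part of any optimal solution of the reformulation is an optimal solution of~\eqref{prob:ldr}, so a sparsity bound on the former transfers to the latter.

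Next I would check that Assumptions~\ref{ass:1} and~\ref{ass:2} hold for this reformulation: the feasibility/finiteness condition of Assumption~\ref{ass:1} is inherited via the equivalence just described, the box condition is part of the hypothesis, and Assumption~\ref{ass:2} holds because feasibility forces $x_t \ge 0$ (an explicit constraint) and $w_t \ge \max\{h_t a_t, -b_t a_t\} \ge 0$ (using that the holding and backorder costs are nonnegative). Lemma~\ref{lem:1} then supplies an optimal extreme point $\bar{\by}$ of the reformulation, and Lemma~\ref{lem:2} exhibits $\bar{\by}$ as the unique solution of a system~\eqref{line:type1}--\eqref{line:type2} with at most $m+1 = \mathcal{O}(T)$ equations of type~\eqref{line:type1} and at most $(m+1)T = \mathcal{O}(T^2)$ equations of type~\eqref{line:type2}.

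The last and most delicate step is to recast~\eqref{line:type1}--\eqref{line:type2} as an instance of~\eqref{line:system_lemma:1}--\eqref{line:system_lemma:3} with $m_1 \le 5+6T$ and with each column of $\bbp_2$ containing at most one nonzero entry, so that Lemma~\ref{lem:zeros} yields $\|\bar{\by}\|_0 \le 2m_1 \le 10+12T$. The equations of type~\eqref{line:type1} are placed into~\eqref{line:system_lemma:1}. For the type-\eqref{line:type2} equations I would exploit the very simple coefficient structure of the newsvendor constraints: each cost constraint $w_t \ge \pm h_t a_t$ or $w_t \ge \pm b_t a_t$ touches the production rules only through a running-sum pattern $\sum_{s'\le t} x_{s'}$ and involves a single epigraph rule, so the corresponding type-\eqref{line:type2} equations express the epigraph-rule parameters affinely in the production-rule parameters and can be used to \emph{eliminate} the epigraph variables; after this elimination, together with the observation that a binding bound $x_t = p_t$ or $x_t = 0$ merely fixes the slope parameters of that rule to zero (feeding~\eqref{line:system_lemma:3}), the remaining equations form a homogeneous system in the production-rule parameters whose coefficient matrix has at most one nonzero per column, plus a bounded number of inhomogeneous equations that are absorbed into~\eqref{line:system_lemma:1}. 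I expect the main obstacle to be exactly this bookkeeping --- carrying out the elimination and reorganization so that the homogeneous block obeys the one-nonzero-per-column hypothesis of Lemma~\ref{lem:zeros}, and tracking the constants precisely enough to obtain the stated bound $10+12T$ rather than a looser $\mathcal{O}(T)$. A smaller point to nail down is the exactness of the per-period epigraph reformulation of~\eqref{prob:ldr} used above; the remainder is a faithful transcription of the proof of Theorem~\ref{thm:main}.
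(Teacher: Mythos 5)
Your proposal is correct and follows essentially the same route as the paper's proof: lift the newsvendor cost with a per-period auxiliary decision rule $w_t$, invoke \cite[Theorem 3.1]{bertsimas2010optimality} for exactness of the lifted affine formulation (so the production-rule part transfers back to \eqref{prob:ldr}), apply Lemmas~\ref{lem:1} and~\ref{lem:2}, eliminate the auxiliary parameters through the paired binding equations, and count $m_1 \le 5+6T$ before doubling via Lemma~\ref{lem:zeros} to get $10+12T$. The one piece of bookkeeping you leave open---making the surviving inhomogeneous running-sum equations $\sum_{\ell=s}^{t} y_{\ell,s}=1$ fit the one-nonzero-per-column hypothesis---is resolved in the paper exactly as in the proof of Theorem~\ref{thm:2}: for each $s$ keep the smallest-$t$ equation as an anchor counted in $m_1$ and replace the others by telescoped differences, which are homogeneous and have pairwise disjoint supports in $\by$.
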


We emphasize that Theorem~\ref{thm:3} establishes that $10+12T$ is an upper bound on the \emph{total} number of nonzero parameters for optimal linear decision rules for the production quantities across \emph{all} of the time periods of the dynamic newsvendor problem. Theorem~\ref{thm:3} thus implies that there always exists an optimal linear decision rule  for the production quantities in the dynamic newsvendor problem in which the \emph{average} number of nonzero parameters for an optimal  linear decision rule in each time period is upper bounded by $(10+12T)/T = 12 + \frac{10}{T}.$ Since optimal linear decision rules are optimal control policies for the dynamic newsvendor problem  \cite[Theorem 3.1]{bertsimas2010optimality}, Theorem~\ref{thm:3} thus establishes for the first time  that the optimal production quantity decision in each time period can be made using an average of $12 + \frac{10}{T} = \mathcal{O}(1)$ of the demand realizations observed in the past time periods.\looseness=-1

\paragraph{Inventory Management with Non-Box  Uncertainty Sets.}

The previous results show that there exist sparse optimal linear decision rules in production-inventory problems with box uncertainty sets. Here we showcase how our proof techniques can be extended to production-inventory problems with non-box uncertainty sets. Specifically, our results here focus on the class of production-inventory problems from Ben-Tal et al.~\cite{ben2004adjustable} in which there are no lead times ($\delta_e = 0$ for all $e \in [E]$) and the demand is chosen from a non-separable budget uncertainty set of the form
\begin{align}\label{eq:newU}
  \mathcal{U} \triangleq \left \{\bzeta \equiv (\zeta_1,\ldots,\zeta_{T+1}): \quad \begin{aligned}
      &\zeta_1 = 1 \\
      &\ubar{D}_s \le \zeta_s \le \bar{D}_s & \forall s \in \{2,\ldots,T+1\}\\
      &\sum_{s=2}^{T+1} \frac{\zeta_s - \ubar{D}_s}{\bar{D}_s - \ubar{D}_s} \le T-k
  \end{aligned} \right \}. 
\end{align}
The difference compared with the original model is that the budget uncertainty  is no longer separable over time period $t$ due to the budget constraint $\sum_{s=2}^{T+1} \frac{\zeta_s - \ubar{D}_s}{\bar{D}_s - \ubar{D}_s} \le T-k$. This budget constraint means that among the $T$ demand variables $\zeta_t$, there are at most $T-k$ demands that choose their maximum values. Now we consider the linear decision rule approximation of the class of production-inventory problems with budget uncertainty set. % decision rule:
\begin{equation} \label{prob:ldr_4} \tag{\textnormal{LDR-4}}
    \begin{aligned}
   & \underset{\substack{\by_{t,1},\ldots,\by_{t,t} \in \R^E:\;  \forall t \in [T]}}{\textnormal{minimize}}&& \max_{\bzeta \equiv (\zeta_1,\ldots,\zeta_{T+1}) \in \mathcal{U} } \left \{ \sum_{t=1}^T \sum_{e=1}^E c_{te} \left( \sum_{s=1}^t    y_{t,s,e} \zeta_s\right) \right \} \\
    &\textnormal{subject to}&& \sum_{t=1}^T  \left( \sum_{s=1}^t y_{t,s,e} \zeta_s \right) \le  Q_e &&  \forall e \in[E] \\
   &&& 0 \le \left( \sum_{s=1}^t y_{t,s,e} \zeta_s \right) \le  p_{te} && \forall  e\in[E],\; t \in [T]\\
   &&&V_{\textnormal{min}} \le  v_1 + \sum_{\ell=1}^t \sum_{e=1}^E \left( \sum_{s=1}^\ell y_{\ell,s,e} \zeta_s \right)  - \sum_{s=2}^{t+1} \zeta_s  \le  V_{\textnormal{max}}&& \forall  t \in [T]\\
   &&& \quad \forall \bzeta \equiv (\zeta_1,\ldots,\zeta_{T+1})\in \mathcal{U}.
    \end{aligned}
  \end{equation}
In the following theorem, we show that our sparsity guarantees can be extended to optimal linear decision rules for the optimization problem~\eqref{prob:ldr_4}. More broadly, the following theorem establishes that our proof techniques for establishing sparsity guarantees for optimal linear decision rules can be extended to applications in which the uncertainty sets are dependent across time periods. 

\begin{theorem} \label{thm:nonbox}
If \eqref{prob:ldr_4} is feasible and has a finite optimal objective value, then there exists an optimal solution $\bar{\by}$ for \eqref{prob:ldr_4} that satisfies $\| \bar{\by} \|_0 \le  2 + 8E + 10T + 6ET + 12kT.$
\end{theorem}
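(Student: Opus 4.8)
**The plan is to reduce the budget-uncertainty problem to the lifted box-uncertainty formulation already covered by Theorem~\ref{thm:main}.** The key observation is that the budget uncertainty set $\mathcal{U}$ in~\eqref{eq:newU} can be represented as the projection of a box uncertainty set in a higher-dimensional space. Specifically, I would introduce auxiliary uncertain variables and rewrite the single budget constraint $\sum_{s=2}^{T+1} \frac{\zeta_s - \ubar{D}_s}{\bar{D}_s - \ubar{D}_s} \le T-k$ by lifting: each normalized demand $\frac{\zeta_s - \ubar{D}_s}{\bar{D}_s - \ubar{D}_s} \in [0,1]$ gets paired with a slack, and the budget constraint becomes one equality over a box. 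This is the standard ``lifted uncertainty set'' trick (cf. the $\Ulift$ notation defined but unused in the preamble). The point is that after lifting, the robust counterpart of~\eqref{prob:ldr_4} becomes an instance of~\eqref{prob:ldr_b} with a cost function of the form~\eqref{line:cost}, just with more constraints $m$ and more stages.

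\textbf{Next I would run the three-lemma machinery of Theorem~\ref{thm:main} on the lifted problem.} Lemma~\ref{lem:1} gives existence of an extreme-point optimal solution; Lemma~\ref{lem:2} writes that extreme point as the unique solution of a \eqref{line:type1}--\eqref{line:type2} system; and Lemma~\ref{lem:zeros} bounds the number of nonzeros provided the \eqref{line:type2} equations can be massaged into the column-sparse form~\eqref{line:system_lemma:2}. The delicate point is that in the lifted formulation the uncertain variables are no longer independent across stages in the original indexing, so the decomposition of the $\max$ over $\mathcal{U}$ into per-stage maxima (which was what made the box case work cleanly) must be re-derived for the lifted box. But since the lifted set \emph{is} a box, this decomposition holds verbatim once we index by the lifted stages. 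I would then carefully count: the budget constraint contributes $k$ extra ``degrees of coupling,'' and tracing this through Lemma~\ref{lem:zeros} (where $m_1$ governs the nonzero count via $\|\bar{\bz}\|_0 \le 2m_1$) should produce the extra additive term $12kT$ on top of the $2+8E+10T+6ET$ from Theorem~\ref{thm:main}.

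\textbf{The main obstacle will be controlling how the budget constraint inflates the \eqref{line:type1} count.} In the box case, the number of ``hard'' equations is $O(m)$ and the number of ``easy'' equations is $O(mT)$, and the easy system has the crucial column-sparsity property (each column of $\bbp_2$ has at most one nonzero). When the budget constraint is present, the dual of the inner maximization over $\mathcal{U}$ introduces a single dual multiplier for the budget inequality that appears in \emph{every} stage's term, which threatens to destroy the column-sparsity of~\eqref{line:system_lemma:2}. The resolution I anticipate is that this budget multiplier is a single scalar, so it can be ``absorbed'' into the $\bbp_1$ block rather than the $\bbp_2$ block, at the cost of $O(T)$ extra rows in $\bbp_1$ coming from the $T+1$ box constraints that the budget polytope's vertices activate, and an extra factor of $k$ from the combinatorial structure of which $T-k$ coordinates sit at their upper bounds at a vertex. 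Making this absorption precise---showing that exactly the right set of equations can be moved from EASY to HARD so that the residual EASY system stays column-sparse, and that the HARD count grows by only $O(kT)$ rather than something worse---is where the real work lies. Once that bookkeeping is done, plugging $m_1 = O(E + T + ET + kT)$ into $\|\bar{\by}\|_0 \le 2m_1$ and adding the contributions from the nonnegativity-structure terms yields the claimed bound.
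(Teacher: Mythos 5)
Your reduction in the first paragraph does not work as stated. The budget set $\mathcal{U}$ in \eqref{eq:newU} is \emph{not} the projection of a box: adding a slack variable turns the budget inequality into an equality over a box, so the lifted set is a box intersected with a hyperplane, not a box. (Equivalently, linear images of boxes are zonotopes, and a box with one corner cut off by a knapsack constraint is not one.) Consequently your claim that ``since the lifted set is a box, this decomposition holds verbatim'' fails: the coupling equality ties all lifted coordinates together, Assumption~\ref{ass:1}(b) is violated for the lifted problem, the inner maximization no longer separates stage by stage, and Lemmas~\ref{lem:1}--\ref{lem:2} cannot be invoked for it. Your third paragraph then pivots to a dual-multiplier view of the inner maximization, which is indeed the right direction, but you explicitly leave the decisive step (``making this absorption precise \dots is where the real work lies'') unproven, and your heuristic for why only $O(kT)$ equations migrate from EASY to HARD (``an extra factor of $k$ from which $T-k$ coordinates sit at their upper bounds'') is not an argument. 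Without it, nothing rules out that the budget multiplier contaminates the easy equations of \emph{every} constraint, destroying the column-sparsity needed for Lemma~\ref{lem:zeros} or inflating the hard block far beyond $O(kT)$; the stated bound $2+8E+10T+6ET+12kT$ is therefore not established.

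For comparison, the paper's proof dualizes the budget constraint \emph{inside each robust constraint separately}: writing $\mathcal{U}=\{\zeta_t\in\mathcal{U}_t,\ \bff^\intercal\bzeta\le g\}$, it introduces a multiplier $\mu_i\ge 0$ per constraint $i$, defines the box-uncertainty feasible sets $\mathcal{Y}_{\bmu}$, and proves $\mathcal{Y}=\cup_{\bmu\ge\bzero}\mathcal{Y}_{\bmu}$ (Lemma~\ref{lem:equi}, by weak/strong LP duality) and that every extreme point of $\mathcal{Y}$ is an extreme point of some $\mathcal{Y}_{\bmu}$ (Lemma~\ref{lem:extreme_point}). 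The second key ingredient, absent from your sketch, is that the constraints indexed by $t\le T+1-k$ involve only $\zeta_1,\dots,\zeta_{t+1}$, and over those coordinates the budget set projects onto the full box, so no multiplier is needed for them; only the $O(kE)$ production constraints and $O(k)$ inventory constraints for late periods acquire $\mu$-terms. For those late periods the EASY equations become inhomogeneous (e.g.\ $y_{t,s,e}=\mu f_s$ rather than $y_{t,s,e}=0$) and must be counted in the hard block, contributing exactly $6kT$ additional rows to $|\mathcal{I}|$, while the early-period EASY equations keep the zero/column-sparse structure needed for Lemma~\ref{lem:zeros}. Doubling via $\|\bar{\by}\|_0\le 2|\mathcal{I}|$ then gives the $+12kT$. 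Supplying this per-constraint dualization, the extreme-point transfer lemma, and the early-period box observation is precisely what your proposal is missing.
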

The proof of Theorem \ref{thm:nonbox} can be found in Appendix~\ref{sec:proof_4}. The basic idea of the proof technique is to reduce the problem with a non-box uncertainty set to one with a box uncertainty set by dualizing the coupling constraint (i.e., the budget constraint for this example). We then  utilize the proof framework presented in \S\ref{sec:main_result}.

We emphasize that Theorem \ref{thm:nonbox} only establishes the existence of a sparse solution of \eqref{prob:ldr_4} in the case when $k$ is small. It is an open question whether sparsity guarantees hold when $k$ is large relative to $T$, as is typically recommended in the literature. 
Nevertheless, we hope the proof of Theorem \ref{thm:nonbox} can inspire the community to develop more examples of robust optimization problems with many time periods where the existence of sparse optimal linear decision rules is guaranteed.
  
\section{Conclusion and Future  Research} \label{sec:conclusion}

In this paper, we proved that there  exist sparse optimal linear decision rules for the-0 class of production-inventory problems from Ben-Tal et al.~\cite{ben2004adjustable}, and that the number of nonzero parameters in sparse optimal linear decision rules grows linearly in the number of time periods. 
Apart from their theoretical significance, our sparsity guarantees were shown to give rise to new practical algorithms for computing linear decision rules in robust optimization problems with huge numbers of time periods that were  out-of-reach by previous state-of-the-art algorithms. 
More broadly, we hope that this paper offers new motivations to firms for using linear decision rules in real-world large-scale applications, as well as opens up new research directions at the interface  between the practice and theory of robust optimization. These research directions include harnessing the structure of sparse optimal linear decision rules to design  effective algorithms for computing optimal linear decision rules in real-world large-scale applications, studying the tradeoffs between interpretability of linear decision rules and Pareto efficiency \cite{iancu2014pareto,bertsimas2020pareto}, analyzing the implications of sparsity of linear decision rules on time inconsistency in risk-averse planning problems, and investigating whether sparsity can be combined in Fourier-Motzkin elimination to obtain high-accuracy approximations in robust optimization problems with many recourse decisions \cite{zhen2018adjustable}.

\bibliographystyle{plainnat}
\bibliography{robust_ldr} 
%% Here starts the e-companion (EC)
%%%%%%%%%%%%%%%%%%%%%%%%%%%%%%%%%%%%%%%%%%%%%%%%%%%

\ECSwitch

%\ECDisclaimer
%%%%%%%%%%%%%%%%%%%%%%%%%%%%%%%%%%%%%%%%%%%%%%%%%%%%%%%%%%
%\OneAndAHalfSpacedXI

\AppendixTitle{Technical Proofs and Additional Results}
\setlength{\parskip}{0em}

%%% Main head for the e-companion
\begin{APPENDICES}

\section{Proofs of Lemmas~\ref{lem:1}, \ref{lem:2}, and \ref{lem:zeros}} \label{appx:3}
Throughout the proofs in Appendix~\ref{appx:3}, we will denote the set of feasible solutions to \eqref{prob:ldr_b} by 
\begin{align}\label{set}
\mathcal{Y} \triangleq \left \{ (\by,c_0): \;  \max_{\zeta_1 \in \mathcal{U}_1, \ldots,\zeta_T \in \mathcal{U}_T}  \left \{ \sum_{t=1}^T \ba_{i,t}^\intercal  \left(\sum_{s=1}^t \by_{t,s} \zeta_s \right) - \sum_{t=1}^T b_{i,t}   \zeta_t \right \}  \le c_i  \;\; \forall i \in \{0,\ldots,m\} \right \}.
\end{align}
We remark that it follows readily from Assumption~\ref{ass:1} that $\mathcal{Y}$ is a nonempty convex polyhedron. 
\begin{proof}{Proof of Lemma~\ref{lem:1}.}

Our proof of Lemma~\ref{lem:1} is based on contradiction. Indeed, for the sake of developing a contradiction, suppose that the set  $\mathcal{Y}$ does not have an extreme point. Since the set $\mathcal{Y}$ is a polyhedron, it follows from the supposition that $\mathcal{Y}$ has no extreme points that $\mathcal{Y}$ must contain  a line. In other words, there must exist a solution $(\by^0,c^0_0) \in \mathcal{Y}$  and a direction $(\bd, r) \neq (\bzero,0)$ such that $(\by^0,c_0^0)+\alpha (\bd,r) \in \mathcal{Y}$ for all $\alpha\in\R$. 

We begin by showing under the supposition that $\mathcal{Y}$ has no extreme points that $\bd \neq \bzero$. Indeed, we recall from Assumption~\ref{ass:1} that the optimal objective value of \eqref{prob:ldr} is finite. Moreover, we recall that $(\by^0,c_0^0)+\alpha (\bd,r) \in \mathcal{Y}$ for all $\alpha \in \R$. Since the objective value of $(\by^0,c_0^0)+\alpha (\bd,r)$   is equal to $c_0^0 + \alpha r$, it must be the case that $r = 0$. Therefore, it follows from the fact that $(\bd,r) \neq (\bzero,0)$ that $\bd \neq \bzero$.  

Next, it follows from Assumption~\ref{ass:2} that $(\by^0,c_0^0)+\alpha (\bd,r) \in \mathcal{Y}$ for all $\alpha\in\R$ must satisfy
\begin{align*}
    \sum_{s=1}^t \left( \by^0_{t,s} + \alpha \bd_{t,s}\right) \zeta_s \ge \bzero  \quad \quad \forall \alpha \in \R, \; t \in [T], \; \zeta_1 \in \mathcal{U}_1,\ldots,\zeta_T \in \mathcal{U}_T. 
\end{align*}
Since the above inequalities hold for all $\alpha \in \R$, it follows from algebra that
\begin{align}
        \sum_{s=1}^t  \bd_{t,s} \zeta_s = \bzero  \quad \quad \forall t \in [T], \; \zeta_1 \in \mathcal{U}_1,\ldots,\zeta_T \in \mathcal{U}_T.  \label{line:bradisbadatcomingupwithnames}
\end{align}
Recall that the uncertainty sets are intervals of the form $\mathcal{U}_1 \triangleq [\ubar{D}_1,\bar{D}_1],\ldots,\mathcal{U}_{T} \triangleq [\ubar{D}_{T}, \bar{D}_{T}]$, where $\ubar{D}_1 = \bar{D}_1 = 1$ and $\ubar{D}_t < \bar{D}_t$ for all $t \in \{2,\ldots,T\}$. Therefore, we observe  that the equalities on line~\eqref{line:bradisbadatcomingupwithnames} imply that the equality $\bd_{t,s} = \bzero$ must hold for all $s \in \{2,\ldots,T\}$ and $t \in \{s,\ldots,T\}$. Moreover, it follows from line~\eqref{line:bradisbadatcomingupwithnames}, from  the fact that $\ubar{D}_1 = \bar{D}_1 = 1$, and from the fact that $\bd_{t,s} = \bzero$ for all $s \in \{2,\ldots,T\}$ and $t \in \{s,\ldots,T\}$ that the equality $\bd_{t,1}=- \sum_{s=2}^t \bd_{t,s} \zeta_s = \bzero$ must hold for all $t \in \{1,\ldots,T\}$. We have thus shown that $\bd = \bzero$, which contradicts the supposition that the set of optimal solutions $\mathcal{Y}$ has a line. This concludes our proof that $\mathcal{Y}$ has at least one extreme point. \halmos
\end{proof} 

\begin{proof}{Proof of Lemma~\ref{lem:2}.}
Let $(\bar{\by},\bar{c}_0)$ denote an extreme point of  the set $\mathcal{Y}$ for \eqref{prob:ldr_b}. We first observe from the definitions of the uncertainty sets $\mathcal{U}_1 = [\ubar{D}_1,\bar{D}_1],\ldots,\mathcal{U}_T=[\ubar{D}_T,\bar{D}_T]$ and from algebra that $\mathcal{Y}$ can be written equivalently as
\begin{align*}%\label{set}
\mathcal{Y} &= \left \{ (\by,c_0): \;  \max_{\zeta_1 \in \mathcal{U}_1, \ldots,\zeta_T \in \mathcal{U}_T}  \left \{ \sum_{t=1}^T \ba_{i,t}^\intercal  \left(\sum_{s=1}^t \by_{t,s} \zeta_s \right) - \sum_{t=1}^T b_{i,t}   \zeta_t \right \}  \le c_i  \;\; \forall i \in \{0,\ldots,m\} \right \} \\
&=\left \{ (\by,c_0): \;  \max_{\zeta_1 \in \mathcal{U}_1, \ldots,\zeta_T \in \mathcal{U}_T}  \left \{ \sum_{s=1}^T\left( -b_{i,s} + \sum_{t=s}^T \ba_{i,t}^\intercal \by_{t,s}  \right) \zeta_s \right \}  \le c_i  \;\; \forall i \in \{0,\ldots,m\} \right \} \\
&=\left \{ (\by,c_0): \;  \sum_{s=1}^T  \max \left \{ \left( - b_{i,s}+  \sum_{t=s}^T \ba_{i,t}^\intercal \by_{t,s} \right) \ubar{D}_s,\;  \left( - b_{i,s}+  \sum_{t=s}^T \ba_{i,t}^\intercal \by_{t,s} \right) \bar{D}_s \right \} \le c_i   \;\; \forall i \in \{0,\ldots,m\} \right \} \\
&=\left \{ (\by,c_0): \;  \sum_{s=1}^T  \left( - b_{i,s}+  \sum_{t=s}^T \ba_{i,t}^\intercal \by_{t,s} \right) {D}_s^* \le c_i \; \; \forall D^*_s \in \left\{\ubar{D}_s, \bar{D}_s  \right \},\; s \in [T],  \textnormal{ and }i \in \{0,\ldots,m\} \right \}.
\end{align*}
Since $(\bar{\by},\bar{c}_0)$ is an extreme point of the set  $\mathcal{Y}$, and since the set $\mathcal{Y}$ is a polyhedron, we observe that $(\bar{\by},\bar{c}_0)$ is a basic feasible solution of $\mathcal{Y}$. In other words, $(\bar{\by}, \bar{c}_0)$ must be the unique solution to the system of constraints in $\mathcal{Y}$ which are active constraints at $(\bar{\by},\bar{c}_0)$. Let $\mathcal{I}$ denote the subset of $\{0,\ldots,m\}$ corresponding to the active constraints at the basic feasible solution $(\bar{\by},\bar{c}_0)$, that is, let 
\begin{align*}
\mathcal{I} \triangleq \{0\} \cup \left \{ i:  \begin{aligned}
 \textnormal{there exists } D^*_s \in \left\{\ubar{D}_s, \bar{D}_s \right \} \textnormal{ for each stage  }s \in [T] \textnormal{ such that } \sum_{s=1}^T  \left( - b_{i,s}+  \sum_{t=s}^T \ba_{i,t}^\intercal \bar{\by}_{t,s} \right) D^*_s  = c_i \end{aligned}
   \right \}.
\end{align*}
Since $(\bar{\by},\bar{c}_0)$ is an element of $\mathcal{Y}$, we observe for each $i \in \mathcal{I}$ that the equality
\begin{align*}
 \sum_{s=1}^T  \left( - b_{i,s}+  \sum_{t=s}^T \ba_{i,t}^\intercal \bar{\by}_{t,s} \right) D^*_s  = \begin{cases}c_i,&\text{if } i \neq 0,\\
 \bar{c}_0,&\text{if } i = 0
 \end{cases}
 \end{align*}
is satisfied whenever the following equality holds for each $s \in [T]$.
 \begin{align*}
 D^*_s = \begin{cases}
 \bar{D}_s,&\text{if } -b_{i,s} + \sum_{t=s}^T \ba_{i,t}^\intercal \bar{\by}_{t,s} > 0,\\
  \ubar{D}_s,&\text{if } -b_{i,s} + \sum_{t=s}^T \ba_{i,t}^\intercal \bar{\by}_{t,s} < 0,\\
 \ubar{D}_s \textnormal{ or } \bar{D}_s &\text{if }  -b_{i,s} + \sum_{t=s}^T \ba_{i,t}^\intercal \bar{\by}_{t,s} = 0.
 \end{cases}
 \end{align*}
Therefore, we conclude from the above observation that the set of active constraints at the basic feasible solution $(\bar{\by},\bar{c}_0)$ is given by the system of equalities
  \begin{align}
  &\sum_{s \in \mathcal{T}^>_i} \left( - b_{i,s}+  \sum_{t=s}^T \ba_{i,t}^\intercal \by_{t,s} \right) \bar{D}_s  +  \sum_{s \in \mathcal{T}^<_i} \left( - b_{i,s}+  \sum_{t=s}^T \ba_{i,t}^\intercal \by_{t,s} \right) \ubar{D}_s + \sum_{s \in \mathcal{T}^=_i} \left( - b_{i,s}+  \sum_{t=s}^T \ba_{i,t}^\intercal \by_{t,s} \right) {D}_s^*  = c_i \notag \\
 & \quad \quad \forall i \in \mathcal{I} \textnormal{ and }  D_s^* \in \left\{\ubar{D}_s,\bar{D}_s \right \} \forall s \in \mathcal{T}^=_i,\label{line:bind}
  \end{align}
where we define the disjoint index sets $\mathcal{T}^>_i$, $\mathcal{T}^=_i$, $\mathcal{T}^<_i$ for each $i \in \mathcal{I}$ as 
\begin{gather*}
\mathcal{T}^>_i \triangleq \left \{ s: \; -b_{i,s} + \sum_{t=s}^T \ba_{i,t}^\intercal \bar{\by}_{t,s} > 0  \right \},\quad  \mathcal{T}^=_i \triangleq \left \{ s: \; -b_{i,s} + \sum_{t=s}^T \ba_{i,t}^\intercal \bar{\by}_{t,s} = 0  \right \},\\
  \mathcal{T}^<_i \triangleq \left \{ s: \; -b_{i,s} + \sum_{t=s}^T \ba_{i,t}^\intercal \bar{\by}_{t,s} < 0  \right \}.
  \end{gather*}
We observe that the system of equalities~\eqref{line:bind} can be rewritten as 
 \begin{align*}
 \sum_{s \in \mathcal{T}^>_i} \left( - b_{0,s}+  \sum_{t=s}^T \ba_{0,t}^\intercal \by_{t,s} \right) \bar{D}_s  +  \sum_{s \in \mathcal{T}^<_i} \left( - b_{0,s}+  \sum_{t=s}^T \ba_{0,t}^\intercal \by_{t,s} \right) \ubar{D}_s  &= c_i &&\forall i \in \mathcal{I} \\
  \sum_{t=s}^T \ba_{i,t}^\intercal \by_{t,s} &=  b_{i,s}  && \forall i \in \mathcal{I}, s \in \mathcal{T}^=_i,
 \end{align*}
 which concludes our proof of Lemma~\ref{lem:2}.  \halmos \end{proof}
 
\begin{proof}{Proof of Lemma~\ref{lem:zeros}. }
The overarching idea of the proof of  Lemma~\ref{lem:zeros} is that the equations $\bbp_2 \bz=\textbf{0}$ from \eqref{line:system_lemma:2} can be eliminated by eliminating the corresponding variables. 

We begin by making several assumptions without loss of generality.  First, we recall that $\bbp_2$ is a matrix with more columns than rows. We henceforth assume without loss of generality that $\bbp_2$ has linearly independent rows  (otherwise we may remove the rows which are linearly dependent without changing the set of feasible solutions to the system of equations~\eqref{line:system_lemma:1}-\eqref{line:system_lemma:3}). We also assume for each row $i \in [m_2]$ of the matrix $\bbp_2$ that there exists a column $j \in [n] \setminus \mathcal{N}$ such that $p_{2,i,j} \neq 0$ (otherwise we may remove row $i$  without changing the set of feasible solutions to the system of equations~\eqref{line:system_lemma:1}-\eqref{line:system_lemma:3}).

  We will also use the following notation. For each row $i \in [m_2]$ of the matrix $\bbp_2$, let $j_i \in [n] \setminus \mathcal{N}$ denote the smallest column for which $p_{2,i,j} \neq 0$, and let $\mathcal{S}_i \triangleq \left \{ j \in [n] \setminus \{j_i\}: \; p_{2,i,j} \neq 0 \right \}$ denote the remaining columns for which the $i$-th row of $\bbp_2$ has nonzero entries. Finally, let $\mathcal{S} \triangleq \{ j_1,\ldots,j_{m_2}\}$, in which case it follows from the assumption that $\sum_{i=1}^{m_2} \mathbb{I} \left \{ p_{2,i,j} \neq 0 \right \} \le 1$ for each $j \in [n]$ that $\mathcal{S},\mathcal{S}_1,\ldots,\mathcal{S}_{m_2}$ are disjoint sets. 

We now perform a substitution of variables to eliminate the constraints~\eqref{line:system_lemma:2} from the system of equations. Specifically, by performing the  substitution $z_{j_i} = - \sum_{j \in \mathcal{S}_i} z_{j}$ for each $i \in [m_2]$, we conclude that $(\bar{z}_j: \; j \in [n] \setminus \mathcal{S})$ is the unique solution to the following system of equations. %equalities:
\begin{align*}
\sum_{j \in [n] \setminus \mathcal{S}} \bbp_{1,j} z_j + \sum_{i=1}^{m_2} \bbp_{1,j_i} \left(- \sum_{j \in \mathcal{S}_i} z_{j} \right) &= \bq\\
z_j &= 0 \quad \forall j \in \mathcal{N}. 
\end{align*}
Since $(\bar{z}_j: \; j \in [n] \setminus \mathcal{S})$ is the unique solution to the above system, and since $\bq \in \R^{m_1}$, we conclude that $(\bar{z}_j: \; j \in [n] \setminus \mathcal{S})$ must have at most $m_1$ nonzero entries. Moreover, it follows from the fact that $\mathcal{S},\mathcal{S}_1,\ldots,\mathcal{S}_{m_2}$ are disjoint sets and the fact that  $z_{j_i} = - \sum_{j \in \mathcal{S}_i} z_{j}$ for each $i \in [m_2]$ that at most $m_1$ of the entries $z_{j_1},\ldots,z_{j_{m_2}}$ are nonzero. This concludes our proof that $\| \bar{\bz} \|_0 \le 2 m_1$. 
\halmos
\end{proof}

\section{Proof of Theorem~\ref{thm:main}} \label{appx:proof_thm12}

\begin{proof}{Proof of Theorem~\ref{thm:main}. } Our proof is split into three steps, corresponding to Lemmas~\ref{lem:1}, \ref{lem:2}, and \ref{lem:zeros}. 

\vspace{0.5em}

\textbf{Step 1}: 
\noindent  We begin in the first step of our proof of Theorem~\ref{thm:main} by applying Lemma~\ref{lem:1} for the epigraph formulation of \eqref{prob:ldr} for cost functions given by \eqref{prob:example1:a}-\eqref{prob:example1:d}. To do this, we first notice that since the minimal lead time across the factories is $\delta \triangleq\min_{e\in [E]} \delta_e$  and since $c_{te}>0$, it must be the case that $x_{te}=0$ for every $t\ge T-\delta+1$ and $e\in [E]$ at every optimal solution. 
Next, we rewrite the cost function from  lines~\eqref{prob:example1:a}-\eqref{prob:example1:d} to match the format  used by Lemmas~\ref{lem:1} and \ref{lem:2}. Indeed, we recall that Lemmas~\ref{lem:1} and \ref{lem:2} involve cost functions in which the number of stages with decisions is equal to the number of stages with uncertain variables. Thus, by introducing dummy decision variables $\bx_{T+1}=0 \in \R^E$,  we  observe that the cost function on lines~\eqref{prob:example1:a}-\eqref{prob:example1:d} can be equivalently written  as 
\begin{align} 
%\begin{aligned}
C(\bx_1,\ldots,\bx_{T+1}, \zeta_1,\ldots,\zeta_{T+1}) =  &  \sum_{e=1}^E \sum_{t=1}^{T+1-\delta}  c_{te} x_{te}\tag{1a} \label{prob:example1:a} \\
\textnormal{subject to}\quad &\sum_{t=1}^{T+1-\delta} x_{te} \le  Q_e &&  \forall e  \in [E] \tag{1b} \label{prob:example1:b}\\
&0 \le x_{te} \le  p_{te} && \forall  e \in [E],  t \in [T +1-\delta] \tag{1c} \label{prob:example1:c}\\
&V_{\textnormal{min}} \le v_1 + \sum_{e=1}^E \sum_{\ell=1}^{t-\delta_e}   x_{\ell e} - \sum_{s=2}^{t+1} \zeta_s \le  V_{\textnormal{max}} && \forall  t \in [T],\tag{1d} \label{prob:example1:d}
%\end{aligned}
\end{align}
where we define $p_{T+1,e}\equiv 0$ and $c_{T+1,e} \equiv 0$ for each factory $e \in [E]$.  After adding the dummy decision variables $\bx_{T+1}$, we observe that the above cost function matches the format used by Lemmas~\ref{lem:1} and \ref{lem:2}.

Before proceeding onward, let us make two brief remarks about the convention used in the above cost function. First, we remark that the constraint~\eqref{prob:example1:c} implies that the dummy decision variables $\bx_{T+1}$ will always be equal to zero. Second, we remark that the decisions $x_{te}$ for each $t \ge T-\delta_e + 1$ are unnecessary in the above formulation, in the sense that these decisions  could always be set to zero without loss of generality. To see why the decisions are unnecessary, we notice that the decision  $x_{te}$ for each $t \ge T-\delta_e + 1$ does not appear in constraint~\eqref{prob:example1:d}; therefore, since $c_{te} \ge 0$, we observe that $x_{te}$ can at optimality take its minimum value that is allowed by constraints~\eqref{prob:example1:b} and \eqref{prob:example1:c}.  Although they are not necessary, the decisions  $x_{te}$ for each $t \ge T-\delta_e + 1$ are included in the above cost function to simplify the notation in the rest of the proof. % analysis. 

We next derive the optimization problem~\eqref{prob:ldr} corresponding to the above cost function~\eqref{prob:example1:a}-\eqref{prob:example1:d}. 
For the sake of clarity, we will show each of the intermediary algebra steps in this derivation. 
Indeed, we first observe from algebra that  line~\eqref{prob:example1:a}  can be written with linear decision rules as
\begin{align*}
  \sum_{e=1}^E \sum_{t=1}^{T+1-\delta}  c_{te} x_{te} &=  \sum_{e=1}^E \sum_{t=1}^{T+1-\delta}  c_{te} \left( \sum_{s=1}^t y_{t,s,e} \zeta_s \right)= \sum_{s=1}^{T +1-\delta}  \left( \sum_{t=s}^{T+1-\delta} \sum_{e=1}^E c_{te} y_{t,s,e} \right)  \zeta_s,
\end{align*}
where the first equality comes from using linear decision rules and the second equality follows from algebra. 
We observe that the left-hand side of line~\eqref{prob:example1:b} for each $e \in [E]$  can be written   with linear decision rules  as
\begin{align*}
   \sum_{t=1}^{T+1-\delta} x_{te} =  \sum_{t=1}^{T+1-\delta}  \left( \sum_{s=1}^t y_{t,s,e} \zeta_s \right) &= \sum_{s=1}^{T+1-\delta} \left( \sum_{t=s}^{T+1-\delta} y_{t,s,e} \right) \zeta_s, 
\end{align*}
where the first equality comes from using linear decision rules and the second equality follows from algebra. 
We observe that the decision in line~\eqref{prob:example1:c} for each $e \in [E]$ and $t \in [T+1-\delta]$ can be written with  linear decision rules as
\begin{align*}
    x_{te} = \sum_{s=1}^{t}  y_{t,s,e} \zeta_s + \sum_{s=t+1}^{T+1} 0 \zeta_s,
\end{align*}
where the equality comes from using linear decision rules. Finally, we observe that the inventory in line~\eqref{prob:example1:d} for each $t \in [T]$ can be written with linear decision rules as
\begin{align*}
    &v_1 + \sum_{e=1}^E \sum_{\ell=1}^{t-\delta_e}   x_{\ell e} - \sum_{s=2}^{t+1} \zeta_s \\
    &= v_1 + \sum_{e=1}^E \sum_{\ell=1}^{t-\delta_e}   \left( \sum_{s=1}^\ell y_{\ell,s,e} \zeta_s \right) - \sum_{s=2}^{t+1} \zeta_s\\
    &= v_1 + \sum_{s=1}^t \left( \sum_{\ell=s}^t \sum_{e \in [E]: \delta_e \le t - \ell} y_{\ell,s,e} \right)\zeta_s - \sum_{s=2}^{t+1} \zeta_s\\
    &= v_1 + \left( \sum_{\ell=1}^t \sum_{e \in [E]: \delta_e \le t - \ell} y_{\ell,1,e} \right) \zeta_1  + \sum_{s=2}^t \left(-1 + \sum_{\ell=s}^t \sum_{e \in [E]: \delta_e \le t - \ell} y_{\ell,s,e} \right) \zeta_s - \zeta_{t+1} + \sum_{s=t+2}^{T+1} 0 \zeta_s,
\end{align*}
where the first equality comes from using linear decision rules and the second and third equalities follow from algebra. Combining the above steps, we  conclude that the epigraph form of the optimization problem~\eqref{prob:ldr} with cost function~\eqref{prob:example1:a}-\eqref{prob:example1:d} is equivalent to 
\begin{equation} \label{prob:ldr_1} \tag{\textnormal{LDR-1}}
    \begin{aligned}
   & \underset{\substack{c_0 \in \R \\\by_{t,1},\ldots,\by_{t,t} \in \R^E:\;  \forall t \in [T+1]}}{\textnormal{minimize}}&&c_0\\ 
    &\textnormal{subject to}&& \sum_{s=1}^{T +1-\delta}  \left( \sum_{t=s}^{T+1-\delta} \sum_{e=1}^E c_{te} y_{t,s,e} \right)  \zeta_s \le c_0 \\
    &&&\sum_{s=1}^{T+1-\delta} \left( \sum_{t=s}^{T+1-\delta} y_{t,s,e} \right) \zeta_s \le  Q_e \quad \quad \quad   \forall e \in[E] \\
   &&& 0 \le  \sum_{s=1}^{t}  y_{t,s,e} \zeta_s + \sum_{s=t+1}^{T+1} 0 \zeta_s \le  p_{te} \quad \forall  e\in[E],\; t \in [T+1-\delta]\\
   &&&V_{\textnormal{min}} \le  v_1 + \left( \sum_{\ell=1}^t \sum_{e \in [E]: \delta_e \le t - \ell} y_{\ell,1,e} \right) \zeta_1 \\
   &&& \quad \quad \quad \quad + \sum_{s=2}^t \left(-1 + \sum_{\ell=s}^t \sum_{e \in [E]: \delta_e \le t - \ell} y_{\ell,s,e} \right) \zeta_s - \zeta_{t+1} + \sum_{s=t+2}^{T+1} 0 \zeta_s  \le  V_{\textnormal{max}}\quad  \forall  t \in [T]\\
   &&& \quad \forall \zeta_1 \in \mathcal{U}_1,\ldots,\zeta_{T+1} \in \mathcal{U}_{T+1}.
    \end{aligned}
  \end{equation} 
With the above notation, we are now ready to invoke Lemma~\ref{lem:1}. Indeed, we recall from the statement of Theorem~\ref{thm:main} that Assumption~\ref{ass:1} holds for cost function~\eqref{prob:example1:a}-\eqref{prob:example1:d}. {Furthermore, the constraint in \eqref{prob:example1:c} guarantees Assumption \ref{ass:2} holds.} Therefore, it follows from Lemma~\ref{lem:1} that the set of feasible solutions to \eqref{prob:ldr_1} is a nonempty polyhedron with at least one extreme point.  

\vspace{0.5em}
\textbf{Step 2}:   In the second step of our proof of Theorem~\ref{thm:main}, we use Lemma~\ref{lem:2} to characterize the structure of extreme points for the feasible set of \eqref{prob:ldr_1}. Indeed, let $(\bar{\by},\bar{c}_0)$ denote an extreme point of the set of feasible solutions of \eqref{prob:ldr_1}. Since Assumption~\ref{ass:1} holds, 
   it follows from Lemma~\ref{lem:2} that there exists 
\begin{itemize}
\item
index sets $\mathcal{I}^{\ref{prob:example1:b},\bar{\by}} \subseteq [E]$,   $\ubar{\mathcal{I}}^{\ref{prob:example1:c},\bar{\by}},\bar{\mathcal{I}}^{\ref{prob:example1:c},\bar{\by}} \subseteq [T+1-\delta]\times[E]$, and  $\ubar{\mathcal{I}}^{\ref{prob:example1:d},\bar{\by}},\bar{\mathcal{I}}^{\ref{prob:example1:d},\bar{\by}}  \subseteq [T]$; 
\item index sets $\mathcal{T}^{\ref{prob:example1:a},\bar{\by}} \subseteq [T+1-\delta]$, $\mathcal{T}^{\ref{prob:example1:b},\bar{\by}}_e \subseteq [T+1-\delta]$ for each $e \in \mathcal{I}^{\ref{prob:example1:b},\bar{\by}}$,  $\ubar{\mathcal{T}}^{\ref{prob:example1:c},\bar{\by}}_{t,e} \subseteq [T+1-\delta]$ for each $(t,e) \in \ubar{\mathcal{I}}^{\ref{prob:example1:c},\bar{\by}}$, 
$\bar{\mathcal{T}}^{\ref{prob:example1:c},\bar{\by}}_{t,e} \subseteq [T+1]$  for each $(t,e) \in \bar{\mathcal{I}}^{\ref{prob:example1:c},\bar{\by}}$, 
$\ubar{\mathcal{T}}^{\ref{prob:example1:d},\bar{\by}}_{t} \subseteq [T+1]$ for each $t \in \ubar{\mathcal{I}}^{\ref{prob:example1:d},\bar{\by}}$,  and  $\bar{\mathcal{T}}^{\ref{prob:example1:d},\bar{\by}}_{t} \subseteq [T+1]$ for each $t \in \bar{\mathcal{I}}^{\ref{prob:example1:d},\bar{\by}}$;
\item  hyperplanes $(\balpha^{\ref{prob:example1:a},\bar{\by}},\beta^{\ref{prob:example1:a},\bar{\by}})$, $(\balpha^{\ref{prob:example1:b},\bar{\by}}_e,\beta^{\ref{prob:example1:b},\bar{\by}}_e)$ for each $e \in {\mathcal{I}}^{\ref{prob:example1:b},\bar{\by}}$, $(\ubar{\balpha}^{\ref{prob:example1:c},\bar{\by}}_{t,e},\ubar{\beta}^{\ref{prob:example1:c},\bar{\by}}_{t,e})$ for each $(t,e) \in \ubar{\mathcal{I}}^{\ref{prob:example1:c},\bar{\by}}$, 
  $(\bar{\balpha}^{\ref{prob:example1:c},\bar{\by}}_{t,e},\bar{\beta}^{\ref{prob:example1:c},\bar{\by}}_{t,e})$ for each $(t,e) \in \bar{\mathcal{I}}^{\ref{prob:example1:c},\bar{\by}}$,  $(\ubar{\balpha}^{\ref{prob:example1:d},\bar{\by}}_{t},\ubar{\beta}^{\ref{prob:example1:d},\bar{\by}}_{t})$ for each $t \in \ubar{\mathcal{I}}^{\ref{prob:example1:d}, \bar{\by}}$, and  $(\bar{\balpha}^{\ref{prob:example1:d},\bar{\by}}_{t},\bar{\beta}^{\ref{prob:example1:d},\bar{\by}}_{t})$ for each $t \in \bar{\mathcal{I}}^{\ref{prob:example1:d}, \bar{\by}}$ 
  \end{itemize}
such that $\bar{\by}$ is the unique solution to  the following system of equations: 
\begin{align}
    %\begin{aligned}
&\balpha^{\ref{prob:example1:a},\bar{\by}} \cdot \by = \beta^{\ref{prob:example1:a},\bar{\by}}  \tag{HARD-1a} \label{line:hard_1a} \\
 &\balpha^{\ref{prob:example1:b},\bar{\by}}_e \cdot \by = \beta^{\ref{prob:example1:b},\bar{\by}}_e && \forall e \in \mathcal{I}^{\ref{prob:example1:b},\bar{\by}}\tag{HARD-1b} \label{line:hard_1b}\\
&\ubar{\balpha}^{\ref{prob:example1:c},\bar{\by}}_{t,e} \cdot \by = \ubar{\beta}^{\ref{prob:example1:c},\bar{\by}}_{t,e} && \forall (t,e) \in  \ubar{\mathcal{I}}^{\ref{prob:example1:c},\bar{\by}}  \tag{HARD-1c-LB} \label{line:hard_1c_lb}\\
&\bar{\balpha}^{\ref{prob:example1:c},\bar{\by}}_{t,e} \cdot \by = \bar{\beta}^{\ref{prob:example1:c},\bar{\by}}_{t,e} && \forall (t,e) \in  \bar{\mathcal{I}}^{\ref{prob:example1:c},\bar{\by}}  \tag{HARD-1c-UB} \label{line:hard_1c_ub}\\
&\ubar{\balpha}^{\ref{prob:example1:d},\bar{\by}}_{t} \cdot \by = \ubar{\beta}^{\ref{prob:example1:d},\bar{\by}}_{t} && \forall t \in  \ubar{\mathcal{I}}^{\ref{prob:example1:d},\bar{\by}}  \tag{HARD-1d-LB} \label{line:hard_1d_lb}\\
&\bar{\balpha}^{\ref{prob:example1:d},\bar{\by}}_{t} \cdot \by = \bar{\beta}^{\ref{prob:example1:d},\bar{\by}}_{t} && \forall t \in  \bar{\mathcal{I}}^{\ref{prob:example1:d},\bar{\by}}  \tag{HARD-1d-UB} \label{line:hard_1d_ub}\\
&\sum_{t=s}^{T+1} \sum_{e=1}^E  c_{te} y_{t,s,e} = 0 && \forall s \in \mathcal{T}^{\ref{prob:example1:a},\bar{\by}}\tag{EASY-1a} \label{line:easy_1a}\\
& \sum_{t=s}^{T+1} y_{t,s,e} =0 && \forall e \in[E], \; s \in \mathcal{T}^{\ref{prob:example1:b},\bar{\by}}_e\tag{EASY-1b} \label{line:easy_1b}\\
&y_{t,s,e} =0 && \forall e \in[E], \; t \in[T+1], \; s \in  \ubar{\mathcal{T}}^{\ref{prob:example1:c},\bar{\by}}_{t,e}  \cup \bar{\mathcal{T}}^{\ref{prob:example1:c},\bar{\by}}_{t,e} \text{ if } s \le t \tag{EASY-1c-i} \label{line:easy_1c_i}\\
&0 =0 && \forall e \in[E], \; t \in[T+1], \; s \in  \ubar{\mathcal{T}}^{\ref{prob:example1:c},\bar{\by}}_{t,e}  \cup \bar{\mathcal{T}}^{\ref{prob:example1:c},\bar{\by}}_{t,e} \text{ if } s \ge t+1 \tag{EASY-1c-ii} \label{line:easy_1c_ii}\\
& \sum_{\ell=s}^t \sum_{e \in [E]: \delta_e \le t - \ell} y_{\ell,s,e}    = 0 && \forall t \in [T], \; s \in \ubar{\mathcal{T}}^{\ref{prob:example1:d},\bar{\by}}_t \cup \bar{\mathcal{T}}^{\ref{prob:example1:d},\bar{\by}}_t \text{ if } s =1\tag{EASY-1d-i} \label{line:easy_1d_i}\\
& \sum_{\ell=s}^t \sum_{e \in [E]: \delta_e \le t - \ell} y_{\ell,s,e}    = 1 && \forall t \in [T], \; s \in \ubar{\mathcal{T}}^{\ref{prob:example1:d},\bar{\by}}_t \cup \bar{\mathcal{T}}^{\ref{prob:example1:d},\bar{\by}}_t \text{ if } s \in \{2,\ldots,t\} \tag{EASY-1d-ii} \label{line:easy_1d_ii}\\
& 0   = 1 && \forall t \in [T], \; s \in \ubar{\mathcal{T}}^{\ref{prob:example1:d},\bar{\by}}_t \cup \bar{\mathcal{T}}^{\ref{prob:example1:d},\bar{\by}}_t \text{ if } s = t+1 \tag{EASY-1d-iii} \label{line:easy_1d_iii}\\
& 0   = 0 && \forall t \in [T], \; s \in \ubar{\mathcal{T}}^{\ref{prob:example1:d},\bar{\by}}_t \cup \bar{\mathcal{T}}^{\ref{prob:example1:d},\bar{\by}}_t \text{ if } s \ge t+2. \tag{EASY-1d-iv} \label{line:easy_1d_iv}
 \end{align}
Let us make two observations about the above system of equations. First, we recall from its construction that there is exactly one solution to the above system, namely, $\bar{\by}$. Therefore, we readily observe that there must be no constraints of the form \eqref{line:easy_1d_iii}, i.e.,  the inequality $s \le t$ must hold for all $t \in [T]$ and all $s \in \ubar{\mathcal{T}}^{\ref{prob:example1:d},\bar{\by}}_t \cup \bar{\mathcal{T}}^{\ref{prob:example1:d},\bar{\by}}_t$. Second, we observe that we can remove the constraints of type \eqref{line:easy_1c_ii} and \eqref{line:easy_1d_iv} without loss of generality. Hence, we will drop the constraints  \eqref{line:easy_1d_iii},  \eqref{line:easy_1c_ii}, and \eqref{line:easy_1d_iv}  in our subsequent analysis.  %Finally, we observe without loss of generality that we can combine 
 \vspace{0.5em}
 
\textbf{Step 3}: In the third step of our proof of Theorem~\ref{thm:main}, we use Lemma~\ref{lem:zeros} to show that every extreme point $(\bar{\by},\bar{c}_0)$ of the feasible set of \eqref{prob:ldr_1} is sparse. 

To motivate our usage of Lemma~\ref{lem:zeros}, let us make several observations about the system of equations corresponding to the extreme point $(\bar{\by},\bar{c}_0)$. First, we observe that there are $\mathcal{O}(ET)$ equations in each of the lines~\eqref{line:hard_1a}, \eqref{line:hard_1b}, \eqref{line:hard_1c_lb},  \eqref{line:hard_1c_ub}, \eqref{line:hard_1d_lb},  \eqref{line:hard_1d_ub}, \eqref{line:easy_1a}, \eqref{line:easy_1b}, and \eqref{line:easy_1d_i}. Second, we recall from our discussion at the end of  Step 2 that the equations in lines~\eqref{line:easy_1c_ii}, \eqref{line:easy_1d_iii} and \eqref{line:easy_1d_iv}  can be dropped without loss of generality. Third, we observe that the equations in line~\eqref{line:easy_1c_i} will ultimately impose sparsity into the solution the system of equations.  Finally, we observe that there are up to $\mathcal{O}(E T^2)$ equations on line~\eqref{line:easy_1d_ii}.

With the goal of applying Lemma~\ref{lem:zeros} to the above system of equations, we perform algebraic manipulations on the equations in \eqref{line:easy_1d_ii}. Indeed, we first define the following index sets:
 \begin{align*}
 \mathscr{S}^{\bar{\by}} &\triangleq \bigcup_{t =1}^T \left( \left( \ubar{\mathcal{T}}^{\ref{prob:example1:d},\bar{\by}}_t  \cup \bar{\mathcal{T}}^{\ref{prob:example1:d},\bar{\by}}_t\right) \cap  \{2,\ldots,t\}\right) ,\\
 \mathscr{T}_s^{\bar{\by}} &{\triangleq \left \{ t \in [T]: s \in \left(\ubar{\mathcal{T}}^{\ref{prob:example1:d}, \bar{\by}}_t \cup \bar{\mathcal{T}}^{\ref{prob:example1:d},\bar{\by}}_t \right) \cap  \{2,\ldots,t\} \right \} \quad  \forall s \in \mathscr{S}^{\bar{\by}}}.
 \end{align*} 
 With the above notation, we readily observe that  \eqref{line:easy_1d_ii} can be written equivalently  as 
 \begin{align*}
 \sum_{\ell=s}^t \sum_{e\in[E]: \delta_e\le t-\ell} y_{\ell,s,e}   = 1 \quad\forall s \in \mathscr{S}^{\bar{\by}}, \; t \in \mathscr{T}^{\bar{\by}}_s. \tag{EASY-1d-ii}
 \end{align*}
For each $s \in \mathscr{S}^{\bar{\by}}$, let the elements of $\mathscr{T}_s^{\bar{\by}}$ be indexed in ascending order by $t_{s,1}^{\bar{\by}} < \ldots < t_{s,| \mathscr{T}_s^{\bar{\by}}|}^{\bar{\by}}$. 
With this notation, we observe that \eqref{line:easy_1d_ii}  can be written equivalently as  the following system of equations. %equivalently as % with the following equivalent constraints:
 \begin{align}
 &\sum_{\ell=s}^{t_{s,1}^{\bar{\by}}} \sum_{e\in[E]: \delta_e\le t_{s,1}^{\bar{\by}}-\ell} y_{\ell,s,e}  = 1 && \forall s \in \mathscr{S}^{\bar{\by}}  \tag{EASY-1d-ii'} \label{line:easy_1d_prime}\\
  &\sum_{\ell=s}^{t_{s,k+1}^{\bar{\by}}} \sum_{e\in [E]: \delta_e\le t_{s,k+1}^{\bar{\by}} -\ell} y_{\ell,s,e}  - \sum_{\ell=s}^{t_{s,k}^{\bar{\by}}} \sum_{e\in [E]: \delta_e\le t_{s,k}^{\bar{\by}} -\ell} y_{\ell,s,e} = 1-1=0 && \forall s \in \mathscr{S}^{\bar{\by}}, \; k \in \left\{1,\ldots,|\mathscr{T}_s^{\bar{\by}}| - 1 \right \}.  \tag{EASY-1d-ii''} \label{line:easy_1d_prime_prime}
 \end{align}
 In particular, it follows from algebra that  \eqref{line:easy_1d_prime_prime} is equivalent to
 \begin{align*}
       &\sum_{\ell=t_{s,k}^{\bar{\by}}+1}^{t_{s,k+1}^{\bar{\by}}} \sum_{e\in [E]: \delta_e\le t_{s,k+1}^{\bar{\by}} -\ell} y_{\ell,s,e}  +  \sum_{\ell=s}^{t_{s,k}^{\bar{\by}}} \sum_{e\in [E]:t_{s,k}^{\bar{\by}} -\ell + 1 \le \delta_e\le t_{s,k+1}^{\bar{\by}} -\ell} y_{\ell,s,e} = 0&& \forall s \in \mathscr{S}^{\bar{\by}}, \; k \in \left\{1,\ldots,|\mathscr{T}_s^{\bar{\by}}| - 1 \right \}.  \tag{EASY-1d-ii''} \label{line:easy_1d_prime_prime}
 \end{align*}

To simplify our notation, we now compactly represent the constraints from lines~\eqref{line:hard_1a}, \eqref{line:hard_1b}, \eqref{line:hard_1c_lb}, \eqref{line:hard_1c_ub} \eqref{line:hard_1d_lb}, \eqref{line:hard_1d_ub}, \eqref{line:easy_1a}, \eqref{line:easy_1b}, \eqref{line:easy_1d_i} and \eqref{line:easy_1d_prime} using the index set $\mathscr{I}^{\bar{\by}}$ and hyperplanes $(\balpha_i^{\bar{\by}}, \beta_i^{\bar{\by}})$ for each $i \in \mathscr{I}^{\bar{\by}}$, where 
 \begin{align*}
| \mathscr{I}^{\bar{\by}}|&= \underbrace{1}_{\eqref{line:hard_1a}} +  \underbrace{\left|\mathcal{I}^{\ref{prob:example1:b},\bar{\by}}\right|}_{\eqref{line:hard_1b}} + \underbrace{\left| \ubar{\mathcal{I}}^{\ref{prob:example1:c},\bar{\by}}   \right| }_{\eqref{line:hard_1c_lb}} + \underbrace{\left| \bar{\mathcal{I}}^{\ref{prob:example1:c},\bar{\by}}   \right| }_{\eqref{line:hard_1c_ub}} +  \underbrace{\left| \ubar{\mathcal{I}}^{\ref{prob:example1:d},\bar{\by}} \right|}_{\eqref{line:hard_1d_lb}}+  \underbrace{\left| \bar{\mathcal{I}}^{\ref{prob:example1:d},\bar{\by}} \right|}_{\eqref{line:hard_1d_ub}} \\
&\quad +\underbrace{\left|  \mathcal{T}^{\ref{prob:example1:a},\bar{\by}} \right| }_{\eqref{line:easy_1a}} + \underbrace{ \sum_{e=1}^E \left| \mathcal{T}^{\ref{prob:example1:b},\bar{\by}}_e \right|}_{\eqref{line:easy_1b}} +  \underbrace{\left| \left \{t \in [T]: \; 1 \in \ubar{\mathcal{T}}^{\ref{prob:example1:d},\bar{\by}}_t \cup \bar{\mathcal{T}}^{\ref{prob:example1:d},\bar{\by}}_t \right \} \right| }_{\eqref{line:easy_1d_i}}+ \underbrace{ \left| \mathscr{S}^{\bar{\by}} \right|}_{\eqref{line:easy_1d_prime}}\\
&\le \underbrace{1}_{\eqref{line:hard_1a}} +  \underbrace{E}_{\eqref{line:hard_1b}} + \underbrace{(T+1-\delta)E}_{\eqref{line:hard_1c_lb}}  + \underbrace{(T+1-\delta)E}_{\eqref{line:hard_1c_ub}} + \underbrace{T}_{\eqref{line:hard_1d_lb}}+ \underbrace{T}_{\eqref{line:hard_1d_ub}}\\
&\quad + \underbrace{T+1}_{\eqref{line:easy_1a}} + \underbrace{(T+1-\delta)E }_{\eqref{line:easy_1b}} +  \underbrace{T }_{\eqref{line:easy_1d_i}} + \underbrace{T-1}_{\eqref{line:easy_1d_prime}}  \\
&= 1 + 4E + 5T + 3ET -3E\delta.
 \end{align*}
With the above notation, we have shown that $\bar{\by}$ is the unique solution to the following system of equations.
\begin{align}
    %\begin{aligned}
&\balpha_i^{\bar{\by}} \cdot \by = \beta_i^{\bar{\by}} \quad  \forall i \in \mathscr{I}^{\bar{\by}} \tag{HARD-combined} \label{line:hard_combined} \\
&\sum_{\ell=t_{s,k}^{\bar{\by}}+1}^{t_{s,k+1}^{\bar{\by}}} \sum_{e\in [E]: \delta_e\le t_{s,k+1}^{\bar{\by}} -\ell} y_{\ell,s,e}  +  \sum_{\ell=s}^{t_{s,k}^{\bar{\by}}} \sum_{e\in [E]:t_{s,k}^{\bar{\by}} -\ell + 1 \le \delta_e\le t_{s,k+1}^{\bar{\by}} -\ell} y_{\ell,s,e} = 0\quad  \forall s \in \mathscr{S}^{\bar{\by}}, \; k \in \left\{1,\ldots,|\mathscr{T}_s^{\bar{\by}}| - 1 \right \}.  \tag{EASY-1d-ii''} \label{line:easy_1d_prime_prime}\\
&y_{t,s,e} =0 \quad \forall e \in[E], \; t \in[T+1], \; s \in  \ubar{\mathcal{T}}^{\ref{prob:example1:c},\bar{\by}}_{t,e}  \cup \bar{\mathcal{T}}^{\ref{prob:example1:c},\bar{\by}}_{t,e} \text{ if } s \le t \tag{EASY-1c-i} \label{line:easy_1c_i}
 \end{align}
{We next apply Lemma~\ref{lem:zeros} to the above system of equations by noticing \eqref{line:hard_combined}, \eqref{line:easy_1d_prime_prime}, and \eqref{line:easy_1c_i} follow the same structure of \eqref{line:system_lemma:1}, \eqref{line:system_lemma:2} and \eqref{line:system_lemma:3}, respectively. Furthermore, we notice that there is no overlapping index of $y$ in \eqref{line:easy_1d_prime_prime}, thus each column of the corresponding $\bbp_2$ has at most one nonzero entry.}
Since $\bar{\by}$ is the unique solution to the above system of equations, it follows from Lemma~\ref{lem:zeros} that the number of nonzero entries in $\bar{\by}$ satisfies
\begin{align*}
\| \bar{\by} \|_0 \le 2 \left| \mathcal{I} \right| &\le 2 \left( 1 + 4E + 5T + 3ET -3E\delta \right),
\end{align*}

Since the above inequality holds for every extreme point $(\bar{\by},\bar{c}_0)$, since we have proven that \eqref{prob:ldr_1} has at least one extreme point, and since we observe from Assumption~\ref{ass:1} that \eqref{prob:ldr_1} has an optimal solution which is an extreme point, our proof of Theorem~\ref{thm:main} is complete. \halmos 
\end{proof}

\section{Proofs of Theorem~\ref{thm:3}}
\begin{proof}{Proof of Theorem~\ref{thm:3}.}
Our proof of Theorem~\ref{thm:3} follows a similar organization to that of Theorem~\ref{thm:main}. Specifically, the proof of Theorem~\ref{thm:main} is split into three steps which correspond to Lemmas~\ref{lem:1}, \ref{lem:2}, and \ref{lem:zeros}. In contrast to the proof of Theorem~\ref{thm:main}, the proof of Theorem~\ref{thm:3} requires the introduction of auxiliary decision rules to account for nonlinear cost functions. 

\vspace{0.5em}
\textbf{Step 1}: We begin in the first step of our proof of Theorem~\ref{thm:3} by applying Lemma~\ref{lem:1} to an epigraph formulation of \eqref{prob:ldr} for cost function~\eqref{prob:example3:a}-\eqref{prob:example3:b}. 
Indeed, we observe that \eqref{prob:main} with cost function \eqref{prob:example3:a}-\eqref{prob:example3:b} can be reformulated as  
\begin{align*}
    \begin{aligned}
&\underset{\bx, \bz}{\textnormal{minimize}}&& \max_{\zeta_1 \in \mathcal{U}_1,\ldots ,\zeta_{T+1} \in \mathcal{U}_{T+1}} \left \{ \sum_{t=1}^{T+1} c_t  x_{t}(\zeta_1,\ldots,\zeta_t) + \sum_{t=1}^{T+1} z_{t}(\zeta_1,\ldots,\zeta_{t}) \right \}  \\
&\textnormal{subject to}&& z_{t+1}(\zeta_1,\ldots,\zeta_{t+1})   \ge h_t \left( v_1 + \sum_{s=1}^t x_{s}(\zeta_1,\ldots,\zeta_s)  - \sum_{s=2}^{t+1} \zeta_s \right)  && \forall t \in [T]\\
&&&z_{t+1}(\zeta_1,\ldots,\zeta_{t+1})   \ge - b_t \left( v_1 + \sum_{s=1}^t x_{s}(\zeta_1,\ldots,\zeta_s)  - \sum_{s=2}^{t+1} \zeta_s \right)  &&  \forall  t \in [T]\\
&&&0 \le x_t(\zeta_1,\ldots,\zeta_t) \le  p_{t} && \forall t\in[T + 1]\\
&&& z_1(\zeta_1) \ge 0 \\
&&& \quad \forall \zeta_1 \in \mathcal{U}_1,\ldots,\zeta_{T+1} \in \mathcal{U}_{T+1},
\end{aligned}
\end{align*}
where the auxiliary decision rule $z_{t+1}(\zeta_1,\ldots,\zeta_{t+1})$ captures the holding and backorder costs in each period $t \in [T]$, $p_{T+1} = 0$, $c_{T+1} = 1$, and  $x_{T+1}(\zeta_1,\ldots,\zeta_{T+1})$ and $z_1(\zeta_1)$ are dummy decision rules that  can always be identically equal to zero at optimality.  
We note that the dummy decision rules $x_{T+1}(\zeta_1,\ldots,\zeta_{T+1})$ and $z_1(\zeta_1)$ have been introduced into the above optimization problem to match the setting of Lemmas~\ref{lem:1} and \ref{lem:2}, in which the number of decisions is constant in each stage (in this case, the number of decisions in each stage is $n = 2$).  We also readily observe from inspection that any feasible solution of the above optimization problem will satisfy  $x_t(\zeta_1,\ldots,\zeta_{t}), z_t(\zeta_1,\ldots,\zeta_t) \ge 0$ for all stages $t \in [T+1]$ and all realizations $\zeta_1 \in \mathcal{U}_1,\ldots,\zeta_{T+1} \in \mathcal{U}_{T+1}$.  

It follows from \cite[Theorem 3.1]{bertsimas2010optimality} and Assumption~\ref{ass:1}  that the auxiliary decision rules  $z_{t}(\zeta_1,\ldots,\zeta_{t})$ and the production decision rules $x_t(\zeta_1,\ldots,\zeta_t)$ for all $t \in [T+1]$ in the above optimization problem can be replaced with linear decision rules without any loss of optimality. Hence,  we conclude that the optimization problems~\eqref{prob:main} and \eqref{prob:ldr} with cost function \eqref{prob:example3:a}-\eqref{prob:example3:b} are equivalent to one another and can be written as  
\begin{align*}%\label{eq:example3}
\begin{aligned}
   & \underset{\substack{y_{t,1},\ldots,y_{t,t} \in \R \; \forall t \in [T+1]\\
    w_{t,1},\ldots,w_{t,t} \in \R \; \forall t \in [T+1]}}{\textnormal{minimize}}\quad &&\max_{\zeta_1 \in \mathcal{U}_1,\ldots,\zeta_{T+1} \in \mathcal{U}_{T+1}} \left \{ \sum_{t=1}^{T+1} c_t  \left( \sum_{s=1}^t y_{t,s} \zeta_s \right) + \sum_{t=1}^{T+1} \left( \sum_{s=1}^{t} w_{t,s} \zeta_s \right) \right \} % \tag{1a} \label{prob:example1:a} 
\\
&\textnormal{subject to}&&  \sum_{s=1}^{t+1} w_{t+1,s} \zeta_s \ge h_t \left( v_1 + \sum_{\ell=1}^t   \left( \sum_{s=1}^\ell y_{\ell,s} \zeta_s \right)  - \sum_{s=2}^{t+1} \zeta_s \right)  && \forall t \in [T]\\
&&& \sum_{s=1}^{t+1} w_{t+1,s} \zeta_s  \ge - b_t \left( v_1 + \sum_{\ell=1}^t \left( \sum_{s=1}^\ell y_{\ell,s} \zeta_s \right)   - \sum_{s=2}^{t+1} \zeta_s \right)  &&  \forall  t \in [T]\\
&&&0 \le   \sum_{s=1}^t y_{t,s} \zeta_s  \le  p_{t} && \forall t\in[T+1]\\
&&& w_{1,1} \zeta_1 \ge 0 \\
&&& \quad \forall \zeta_1 \in \mathcal{U}_1,\ldots,\zeta_{T+1} \in \mathcal{U}_{T+1}.
\end{aligned}
\end{align*}
By rearranging terms in the above optimization problem, by adding an epigraph decision variable $c_0 \in \R$, and by noticing that $\sum_{\ell=t+1}^t h_t y_{\ell,s} = -\sum_{\ell=t+1}^t b_t y_{\ell,s}=0$ for each period $t \in [T]$, 
we observe that the above optimization problem can be written equivalently as
\begin{align} 
\underset{\substack{c_0 \in \R\\y_{t,1},\ldots,y_{t,t} \in \R \; \forall t \in [T+1]\\
w_{t,1},\ldots,w_{t,t} \in \R \; \forall t \in [T+1]}}{\textnormal{minimize}}\quad &c_0 \notag \\
\textnormal{subject to}\quad &   \sum_{s=1}^{T+1} \left(\sum_{t=s}^{T+1} \left( c_t y_{t,s} +   w_{t,s} \right) \right) \zeta_s  \le c_0 \tag{2a} \label{line:ldr_3a} \\
&  \sum_{s=1}^{t+1} \left(\sum_{\ell=s}^t h_t y_{\ell,s} - h_t - w_{t+1,s} \right) \zeta_s + \sum_{s=t+2}^{T+1} 0 \zeta_s \le -h_t v_1  &&  \forall  t \in [T] \tag{2b} \label{prob:example3:b}\\
& \sum_{s=1}^{t+1} \left(\sum_{\ell=s}^t -b_t y_{\ell,s} + b_t - w_{t+1,s} \right) \zeta_s + \sum_{s=t+2}^{T+1} 0 \zeta_s  \le b_t v_1  &&  \forall t \in [T] \tag{2c} \label{prob:example3:c}\\
 &0 \le  \sum_{s=1}^t y_{t,s} \zeta_s + \sum_{s=t+1}^{T+1} 0 \zeta_s \le p_t && \forall  t \in [T+1] \tag{2d} \label{prob:example3:d}\\
  &\sum_{s=1}^t w_{t,s} \zeta_s  + \sum_{s=t+1}^{T+1} 0 \zeta_s \ge 0 \tag{2e} && \forall t \in [1] \label{prob:example3:e}\\
& \quad \forall \zeta_1 \in \mathcal{U}_1,\ldots,\zeta_{T+1} \in \mathcal{U}_{T+1}.\notag \end{align}
With the above notation, we are now ready to invoke Lemma~\ref{lem:1}. Indeed, we recall from the statement of Theorem~\ref{thm:3} that Assumption~\ref{ass:1} holds for cost function~\eqref{prob:example3:a}-\eqref{prob:example3:b}. Therefore,  it follows from the above reasoning, the constraints in \eqref{prob:example3:b}-\eqref{prob:example3:e}, and Lemma~\ref{lem:1} that the feasible set of the above optimization problem is a nonempty polyhedron with at least one extreme point.  

\vspace{0.5em}
\textbf{Step 2}:   In the second step of our proof of Theorem~\ref{thm:3}, we use Lemma~\ref{lem:2} to characterize the structure of extreme points for the feasible set of the above optimization problem. Indeed, let $(\bar{\by},\bar{\bw},\bar{c}_0)$ denote an extreme point of the feasible set of the above optimization problem. Then it follows readily from Lemma~\ref{lem:2} that there exists 
\begin{itemize}
\item index sets $\mathcal{I}^{\ref{prob:example3:b},\bar{\by},\bar{\bw}}, \mathcal{I}^{\ref{prob:example3:c},\bar{\by},\bar{\bw}} \subseteq [T]$, $\ubar{\mathcal{I}}^{\ref{prob:example3:d},\bar{\by},\bar{\bw}},\bar{\mathcal{I}}^{\ref{prob:example3:d},\bar{\by},\bar{\bw}}  \subseteq [T + 1]$, and $\mathcal{I}^{\ref{prob:example3:e},\bar{\by},\bar{\bw}}  \subseteq [1]$; 
\item index sets $\mathcal{T}^{\ref{prob:example3:a},\bar{\by},\bar{\bw}} \subseteq [T+1]$, $\mathcal{T}^{\ref{prob:example3:b},\bar{\by},\bar{\bw}}_t \subseteq [T+1]$ for each $t \in \mathcal{I}^{\ref{prob:example3:b},\bar{\by},\bar{\bw}}$, $\mathcal{T}^{\ref{prob:example3:c},\bar{\by},\bar{\bw}}_t \subseteq [T+1]$ for each $t \in \mathcal{I}^{\ref{prob:example3:c},\bar{\by},\bar{\bw}}$, $\ubar{\mathcal{T}}^{\ref{prob:example3:d},\bar{\by},\bar{\bw}}_{t} \subseteq [T+1]$ for each $t \in \ubar{\mathcal{I}}^{\ref{prob:example3:d},\bar{\by},\bar{\bw}}$, $\bar{\mathcal{T}}^{\ref{prob:example3:d},\bar{\by},\bar{\bw}}_{t} \subseteq [T+1]$ for each $t \in \bar{\mathcal{I}}^{\ref{prob:example3:d},\bar{\by},\bar{\bw}}$, and  $\mathcal{T}_t^{\ref{prob:example3:e},\bar{\by},\bar{\bw}} \subseteq [T+1]$ for each $t \in \mathcal{I}^{\ref{prob:example3:e},\bar{\by},\bar{\bw}}$; 
\item  hyperplanes $(\balpha^{\ref{prob:example3:a},\bar{\by},\bar{\bw}},\bgamma^{\ref{prob:example3:a},\bar{\by},\bar{\bw}},\beta^{\ref{prob:example3:a},\bar{\by},\bar{\bw}})$, $(\balpha^{\ref{prob:example3:b},\bar{\by},\bar{\bw}}_t,\bgamma^{\ref{prob:example3:b},\bar{\by},\bar{\bw}}_t,\beta^{\ref{prob:example3:b},\bar{\by},\bar{\bw}}_t)$ for each $t \in {\mathcal{I}}^{\ref{prob:example3:b},\bar{\by},\bar{\bw}}$,
$(\balpha^{\ref{prob:example3:c},\bar{\by},\bar{\bw}}_t,\bgamma^{\ref{prob:example3:c},\bar{\by},\bar{\bw}}_t,\beta^{\ref{prob:example3:c},\bar{\by},\bar{\bw}}_t)$ for each $t \in {\mathcal{I}}^{\ref{prob:example3:c},\bar{\by},\bar{\bw}}$, 
$(\ubar{\balpha}^{\ref{prob:example3:d},\bar{\by},\bar{\bw}}_{t},\ubar{\bgamma}^{\ref{prob:example3:d},\bar{\by},\bar{\bw}}_{t},\ubar{\beta}^{\ref{prob:example3:d},\bar{\by},\bar{\bw}}_{t})$ for each $t \in \ubar{\mathcal{I}}^{\ref{prob:example3:d},\bar{\by},\bar{\bw}}$, $(\bar{\balpha}^{\ref{prob:example3:d},\bar{\by},\bar{\bw}}_{t},\bar{\bgamma}^{\ref{prob:example3:d},\bar{\by},\bar{\bw}}_{t}, \bar{\beta}^{\ref{prob:example3:d},\bar{\by},\bar{\bw}}_{t})$ for each $t \in \bar{\mathcal{I}}^{\ref{prob:example3:d},\bar{\by},\bar{\bw}}$, and  $(\balpha_t^{\ref{prob:example3:e},\bar{\by},\bar{\bw}},\bgamma_t^{\ref{prob:example3:e},\bar{\by},\bar{\bw}},\beta_t^{\ref{prob:example3:e},\bar{\by},\bar{\bw}})$ for each $t \in {\mathcal{I}}^{\ref{prob:example3:e},\bar{\by},\bar{\bw}}$
\end{itemize}
such that $(\bar{\by},\bar{\bw})$ is the unique solution to  the following system of equalities. % optimization problem:
\begin{align}
    %\begin{aligned}
&\balpha^{\ref{prob:example3:a},\bar{\by},\bar{\bw}} \cdot \by + \bgamma^{\ref{prob:example3:a},\bar{\by},\bar{\bw}} \cdot \bw = \beta^{\ref{prob:example3:a},\bar{\by},\bar{\bw}}  \tag{HARD-2a} \label{prob:example3:a1} \\
 &\balpha^{\ref{prob:example3:b},\bar{\by},\bar{\bw}}_t \cdot \by + \bgamma^{\ref{prob:example3:b},\bar{\by},\bar{\bw}}_t \cdot \bw = \beta^{\ref{prob:example3:b},\bar{\by},\bar{\bw}}_t && \forall t \in \mathcal{I}^{\ref{prob:example3:b},\bar{\by},\bar{\bw}}\tag{HARD-2b} \label{prob:example3:b1}\\
 &\balpha^{\ref{prob:example3:c},\bar{\by},\bar{\bw}}_t \cdot \by + \bgamma^{\ref{prob:example3:c},\bar{\by},\bar{\bw}}_t \cdot \bw = \beta^{\ref{prob:example3:c},\bar{\by},\bar{\bw}}_t && \forall t \in \mathcal{I}^{\ref{prob:example3:c},\bar{\by},\bar{\bw}}\tag{HARD-2c} \label{prob:example3:c1}\\
&\bar{\balpha}^{\ref{prob:example3:d},\bar{\by},\bar{\bw}}_{t} \cdot \by + \bar{\bgamma}^{\ref{prob:example3:d},\bar{\by},\bar{\bw}}_{t} \cdot \bw = \bar{\beta}^{\ref{prob:example3:d},\bar{\by},\bar{\bw}}_{t} && \forall t \in  \bar{\mathcal{I}}^{\ref{prob:example3:d},\bar{\by},\bar{\bw}}  \tag{HARD-2d-UB} \label{prob:example3:d1UB}\\
&\ubar{\balpha}^{\ref{prob:example3:d},\bar{\by},\bar{\bw}}_{t} \cdot \by + \ubar{\bgamma}^{\ref{prob:example3:d},\bar{\by},\bar{\bw}}_{t} \cdot \bw = \ubar{\beta}^{\ref{prob:example3:d},\bar{\by},\bar{\bw}}_{t} && \forall t \in  \ubar{\mathcal{I}}^{\ref{prob:example3:d},\bar{\by},\bar{\bw}}   \tag{HARD-2d-LB} \label{prob:example3:d1LB}\\
 &\balpha^{\ref{prob:example3:e},\bar{\by},\bar{\bw}}_t \cdot \by + \bgamma^{\ref{prob:example3:e},\bar{\by},\bar{\bw}}_t \cdot \bw = \beta^{\ref{prob:example3:e},\bar{\by},\bar{\bw}}_t && \forall t \in {\mathcal{I}}^{\ref{prob:example3:e},\bar{\by},\bar{\bw}} \tag{HARD-2e} \label{prob:example3:e1}\\
&\sum_{t=s}^{T+1} \left( c_t y_{t,s} +   w_{t,s} \right)= 0 && \forall s \in \mathcal{T}^{\ref{prob:example3:a},\bar{\by},\bar{\bw}}\tag{EASY-2a} \label{prob:example3:a2}\\
& \sum_{\ell=s}^t h_t y_{\ell,s} - h_t - w_{t+1,s} =0 && \forall t \in[T], s \in \mathcal{T}^{\ref{prob:example3:b},\bar{\by},\bar{\bw}}_t\text{ if } s \le t + 1\tag{EASY-2b-i} \label{prob:example3:b2_i}\\
& 0=0 && \forall t \in[T], s \in \mathcal{T}^{\ref{prob:example3:b},\bar{\by},\bar{\bw}}_t\text{ if } s \ge t + 2\tag{EASY-2b-ii} \label{prob:example3:b2_ii}\\
& - \sum_{\ell=s}^t b_t y_{\ell,s} + b_t - w_{t+1,s} =0 && \forall t \in[T], s \in \mathcal{T}^{\ref{prob:example3:c},\bar{\by},\bar{\bw}}_t\text{ if } s \le t + 1\tag{EASY-2c-i} \label{prob:example3:c2_i}\\
&0=0 && \forall t \in[T], s \in \mathcal{T}^{\ref{prob:example3:c},\bar{\by},\bar{\bw}}_t\text{ if } s \ge t + 2\tag{EASY-2c-ii} \label{prob:example3:c2_ii}\\
&y_{t,s} =0 && \forall t \in[T+1], \; s \in \bar{\mathcal{T}}^{\ref{prob:example3:d},\bar{\by},\bar{\bw}}_{t} \cup \ubar{\mathcal{T}}^{\ref{prob:example3:d},\bar{\by},\bar{\bw}}_{t} \text{ if } s \le t \tag{EASY-2d-i} \label{prob:example3:d2_i}\\
&0=0 && \forall t \in[T+1], \; s \in \bar{\mathcal{T}}^{\ref{prob:example3:d},\bar{\by},\bar{\bw}}_{t} \cup \ubar{\mathcal{T}}^{\ref{prob:example3:d},\bar{\by},\bar{\bw}}_{t} \text{ if } s \ge t+1   \tag{EASY-2d-ii} \label{prob:example3:d2_ii}\\
&w_{t,s} = 0 && \forall t \in [1], s \in {\mathcal{T}}^{\ref{prob:example3:e},\bar{\by},\bar{\bw}}_{t} \text{ if } s  \le t    \tag{EASY-2e-i} \label{prob:example3:e2_i}\\
&0 = 0 && \forall  t \in [1], s \in {\mathcal{T}}^{\ref{prob:example3:e},\bar{\by},\bar{\bw}}_{t} \text{ if } s \ge t + 1.    \tag{EASY-2e-ii} \label{prob:example3:e2_ii}
 \end{align}

 \vspace{0.5em}
 
 \textbf{Step 3}: In the third step of our proof of Theorem~\ref{thm:3}, we apply Lemma~\ref{lem:zeros} to every extreme point $(\bar{\by},\bar{\bw},\bar{c}_0)$. With the goal of Lemma~\ref{lem:zeros} in mind, we first observe from \eqref{prob:example3:b2_i} and \eqref{prob:example3:c2_i} that  any solution to the above system of equations must satisfy the following equalities:
 \begin{align*}
     w_{t+1,s} &= \sum_{\ell=s}^t h_t y_{\ell,s} - h_t && \forall t \in [T], \; s \in\mathcal{T}^{\ref{prob:example3:b},\bar{\by},\bar{\bw}}_t \text{ if } s \le t + 1\\
     w_{t+1,s} &= - \sum_{\ell=s}^t b_t y_{\ell,s} + b_t && \forall t \in [T], \; s \in\mathcal{T}^{\ref{prob:example3:c},\bar{\by},\bar{\bw}}_t  \text{ if } s \le t + 1. 
 \end{align*}

%Motivated by the above equality, 
We can thus eliminate the variables $w_{t+1,s}$ for each $t\in [T]$ and $s\in \mathcal{T}^{\ref{prob:example3:b},\bar{\by},\bar{\bw}}_t \cup \mathcal{T}^{\ref{prob:example3:c},\bar{\by},\bar{\bw}}_t$ with $s \le t + 1$ in the above system of equations by the substitution 
\begin{align*}
    w_{t+1,s} \leftarrow \begin{cases}
    \sum_{\ell=s}^t h_t y_{\ell,s} - h_t,&\text{if } t \in [T] \text{ and }s \in\mathcal{T}^{\ref{prob:example3:b},\bar{\by},\bar{\bw}}_t,\\
    - \sum_{\ell=s}^t b_t y_{\ell,s} + b_t,&\text{if } t \in [T] \text{ and }  s \in  \mathcal{T}^{\ref{prob:example3:c},\bar{\by},\bar{\bw}}_t  \setminus \mathcal{T}^{\ref{prob:example3:b},\bar{\by},\bar{\bw}}_t.
    \end{cases}
\end{align*}
With this substitution, we observe that \eqref{prob:example3:b2_i} and \eqref{prob:example3:c2_i} can be replaced with
\begin{align}
    \sum_{\ell=s}^t y_{\ell,s}=1 ~~~~\forall t\in [T], s\in \mathcal{T}^{\ref{prob:example3:c},\bar{\by},\bar{\bw}}_t \cap \mathcal{T}^{\ref{prob:example3:b},\bar{\by},\bar{\bw}}_t, \; s \le t + 1 \tag{EASY-2bc-i} \label{prob:example3:bc'2}
\end{align}

We next perform algebraic manipulations on the equations in \eqref{prob:example3:bc'2}. 
Indeed, we first define the following sets:
 \begin{align*}
 \mathscr{S}^{\bar{\by},\bar{\bw}} &\triangleq \bigcup_{t =1}^T  \left( \left( \mathcal{T}^{\ref{prob:example3:c},\bar{\by},\bar{\bw}}_t \cap \mathcal{T}^{\ref{prob:example3:b},\bar{\by},\bar{\bw}}_t\right) \cap \{1,\ldots,t+1\} \right),\\
  \mathscr{T}^{\bar{\by},\bar{\bw}}_s &\triangleq \left \{ t \in [T]:\; s \in \mathcal{T}^{\ref{prob:example3:c},\bar{\by},\bar{\bw}}_t \cap \mathcal{T}^{\ref{prob:example3:b},\bar{\by},\bar{\bw}}_t {\cap \{1,\ldots,t+1\}} \right \} \quad  \forall s \in \mathscr{S}^{\bar{\by},\bar{\bw}}.
 \end{align*} 
 With the above notation, we observe that the constraints \eqref{prob:example3:bc'2} can be rewritten as 
 \begin{align*}
 &\sum_{\ell=s}^t  y_{\ell,s}   = 1 && \forall s \in \mathscr{S}^{\bar{\by},\bar{\bw}}, \; t \in \mathscr{T}^{\bar{\by},\bar{\bw}}_s.% \tag{1d-2} \label{prob:example1:d2}
 \end{align*}
Moreover, let the elements of $\mathscr{T}^{\bar{\by},\bar{\bw}}_s$ be denoted by $t^{\bar{\by},\bar{\bw}}_{s,1} < \ldots < t^{\bar{\by},\bar{\bw}}_{s,| \mathscr{T}^{\bar{\by},\bar{\bw}}_s|}$. 
With this notation, we readily observe using algebra that the constraints \eqref{prob:example3:bc'2} can be replaced with the following equivalent constraints:
 \begin{align}
 &\sum_{\ell=s}^{t_{s,1}^{\bar{\by},\bar{\bw}}}  y_{\ell,s}   = 1 && \forall s \in \mathscr{S}^{\bar{\by},\bar{\bw}}  \tag{EASY-2bc-i'} \label{prob:example3:bc2_prime}\\
  &\sum_{\ell={t^{\bar{\by},\bar{\bw}}_{s,k}}+1}^{t^{\bar{\by},\bar{\bw}}_{s,k+1}}  y_{\ell,s}   = 0 && \forall s \in \mathscr{S}^{\bar{\by},\bar{\bw}}, \; k \in \left\{1,\ldots,|\mathscr{T}^{\bar{\by},\bar{\bw}}_s| - 1 \right \}.  \tag{EASY-2bc-i''} \label{prob:example3:bc2_prime_prime}
 \end{align}

To simplify our notation, we now compactly represent the constraints from lines~\eqref{prob:example3:a1}, \eqref{prob:example3:b1}, \eqref{prob:example3:c1}, \eqref{prob:example3:d1UB},  \eqref{prob:example3:d1LB},  \eqref{prob:example3:e1},  \eqref{prob:example3:a2},  and \eqref{prob:example3:bc2_prime} using the index set $\mathscr{I}^{\bar{\by},\bar{\bw}}$ and hyperplanes $(\balpha^{\bar{\by},\bar{\bw}}_i, \bgamma_i^{\bar{\by},\bar{\bw}}, \beta^{\bar{\by},\bar{\bw}}_i)$   for each $i \in \mathscr{I}^{\bar{\by},\bar{\bw}}$\footnote{The vector $\bgamma^{\bar{\by},\bar{\bw}}_i$ in each hyperplane $i \in \mathscr{I}^{\bar{\by},\bar{\bw}}$ contains an element corresponding to the variable $w_{t+1,s}$ for each $t \in [T]$ and $s \notin \mathcal{T}^{\ref{prob:example3:b},\bar{\by},\bar{\bw}}_t \cup \mathcal{T}^{\ref{prob:example3:c},\bar{\by},\bar{\bw}}_t$.}, where
 \begin{align*}
|  \mathscr{I}^{\bar{\by},\bar{\bw}}|&= \underbrace{1}_{\eqref{prob:example3:a1}} 
+  \underbrace{\left|\mathcal{I}^{\ref{prob:example3:b},\bar{\by},\bar{\bw}}\right|}_{\eqref{prob:example3:b1}} 
+  \underbrace{\left|\mathcal{I}^{\ref{prob:example3:c},\bar{\by},\bar{\bw}}\right|}_{\eqref{prob:example3:c1}}
+ \underbrace{\left| \bar{\mathcal{I}}^{\ref{prob:example3:d},\bar{\by},\bar{\bw}}   \right| }_{\eqref{prob:example3:d1UB}}
+  \underbrace{\left| \ubar{\mathcal{I}}^{\ref{prob:example3:d},\bar{\by},\bar{\bw}} \right| }_{\eqref{prob:example3:d1LB}} +  \underbrace{\left| {\mathcal{I}}^{\ref{prob:example3:e},\bar{\by},\bar{\bw}} \right| }_{\eqref{prob:example3:e1}} 
+ \underbrace{ \left| \mathcal{T}^{\ref{prob:example3:a},\bar{\by},\bar{\bw}} \right|}_{\eqref{prob:example3:a2}} 
+ \underbrace{ \left| \mathscr{S}^{\bar{\by},\bar{\bw}} \right|}_{\eqref{prob:example3:bc2_prime}}\\
&\le \underbrace{1}_{\eqref{prob:example3:a1}} +  \underbrace{T}_{\eqref{prob:example3:b1}} + 
\underbrace{T}_{\eqref{prob:example3:c1}} + 
\underbrace{T+1}_{\eqref{prob:example3:d1UB}} +  \underbrace{T+1}_{\eqref{prob:example3:d1LB}} +  \underbrace{1 }_{\eqref{prob:example3:e1}} + \underbrace{T+1}_{\eqref{prob:example3:a2}}+   \underbrace{T}_{\eqref{prob:example3:bc2_prime}}  \\
&\le 5 + 6T.
 \end{align*}
It follows from the above notation, from the fact that we have used substitution to eliminate  the variable $w_{t+1,s}$ for each $t \in [T]$ and $s \in  \mathcal{T}^{\ref{prob:example3:c},\bar{\by},\bar{\bw}}_t  \cup \mathcal{T}^{\ref{prob:example3:b},\bar{\by},\bar{\bw}}_t$ with $s \le t + 1$, and from the fact that \eqref{prob:example3:b2_ii}, \eqref{prob:example3:c2_ii},  \eqref{prob:example3:d2_ii}, and  \eqref{prob:example3:e2_ii} can be eliminated without loss of generality that $\left(\bar{\by}, \left( \bar{w}_{t+1,s}: t \in [T], \; s \notin \left( \mathcal{T}^{\ref{prob:example3:c},\bar{\by},\bar{\bw}}_t  \cup \mathcal{T}^{\ref{prob:example3:b},\bar{\by},\bar{\bw}}_t \right) \cap \{1,\ldots,t+1\} \right ) \right)$ is the unique solution to the following system of equations. 
\begin{align}
    %\begin{aligned}
&\balpha_i^{\bar{\by},\bar{\bw}} \cdot \by + \sum_{t=1}^T \sum_{s \notin \left(\mathcal{T}^{\ref{prob:example3:c},\bar{\by},\bar{\bw}}_t  \cup \mathcal{T}^{\ref{prob:example3:b},\bar{\by},\bar{\bw}}_t {\cap \{1,\ldots,t+1\}} \right )}  \gamma_i^{\bar{\by},\bar{\bw}} w_{t+1,s} = \beta_i^{\bar{\by},\bar{\bw}} && \forall i \in \mathscr{I}^{\bar{\by},\bar{\bw}} \notag \\
 &y_{t,s} =0 && \forall t \in[T+1], \; s \in \bar{\mathcal{T}}^{\ref{prob:example3:d},\bar{\by},\bar{\bw}}_{t} \cup \ubar{\mathcal{T}}^{\ref{prob:example3:d},\bar{\by},\bar{\bw}}_{t} \text{ if } s \le t \tag{EASY-2d-i} \label{prob:example3:d2_i}\\
&w_{1,1} = 0 && \forall t \in [1], s \in {\mathcal{T}}^{\ref{prob:example3:e},\bar{\by},\bar{\bw}}_{t} \text{ if } s  \le t    \tag{EASY-2e-i} \label{prob:example3:e2_i}\\
  &\sum_{\ell={t^{\bar{\by},\bar{\bw}}_{s,k}}+1}^{t^{\bar{\by},\bar{\bw}}_{s,k+1}}  y_{\ell,s}   = 0 && \forall s \in \mathscr{S}^{\bar{\by},\bar{\bw}}, \; k \in \left\{1,\ldots,|\mathscr{T}^{\bar{\by},\bar{\bw}}_s| - 1 \right \}.  \tag{EASY-2bc''}\label{line:EASY-2bc''}
  \end{align}
Notice that any $y_{\ell,s}$ only appear once in \eqref{line:EASY-2bc''}. It thus follows from Lemma~\ref{lem:zeros} that the number of nonzero entries in $\bar{\by}$ satisfies
\begin{align*}
\| \bar{\by} \|_0 \le
\left\| \left(\bar{\by}, \left( \bar{w}_{t+1,s}: t \in [T], \; s \notin \left( \mathcal{T}^{\ref{prob:example3:c},\bar{\by},\bar{\bw}}_t  \cup \mathcal{T}^{\ref{prob:example3:b},\bar{\by},\bar{\bw}}_t \right) \cap \{1,\ldots,t+1\} \right ) \right) \right\|_0 \le 2 \left| \mathscr{I}^{\bar{\by},\bar{\bw}} \right| &\le 10 + 12T,
\end{align*}
which concludes our proof of Theorem~\ref{thm:3}.
\halmos \end{proof}

% \section{Proof fro Theorem }

%\clearpage

\section{Proof of Theorem~\ref{thm:nonbox}\label{sec:proof_4}}
\begin{proof}{Proof of Theorem~\ref{thm:nonbox}.}

The basic idea of the proof is reducing the problem with non-separable uncertainty set to a problem with box uncertainty set, by dualizing the linking constraints, and then utilizing the same arguments as in the proof of Theorem \ref{thm:main}.

With the same argument as the Step 1 in the proof of Theorem~\ref{thm:main}, we know that the epigraph form of the problem is equivalent to
    \begin{align}\label{budget} \tag{3}
   & \underset{\substack{c_0 \in \R \\\by_{t,1},\ldots,\by_{t,t} \in \R^E:\;  \forall t \in [T+1]}}{\textnormal{minimize}}&&c_0  \notag \\ 
    &\textnormal{subject to}&& \sum_{s=1}^{T +1}  \left( \sum_{t=s}^{T+1} \sum_{e=1}^E c_{te} y_{t,s,e} \right)  \zeta_s \le c_0 \tag{3a} \label{4-a} \\
    &&&\sum_{s=1}^{T+1} \left( \sum_{t=s}^{T+1} y_{t,s,e} \right) \zeta_s \le  Q_e \quad \quad \quad   \forall e \in[E] \tag{3b} \label{4-b} \\
   &&& 0 \le  \sum_{s=1}^{t}  y_{t,s,e} \zeta_s + \sum_{s=t+1}^{T+1} 0 \zeta_s \le  p_{te} \quad \forall  e\in[E],\; t \in [T+1] \tag{3c} \label{4-c}\\
   &&&V_{\textnormal{min}} \le  v_1 + \left( \sum_{\ell=1}^t \sum_{e \in [E]} y_{\ell,1,e} \right) \zeta_1 \nonumber \\
   &&& \quad \quad \quad \quad + \sum_{s=2}^t \left(-1 + \sum_{\ell=s}^t \sum_{e \in [E]} y_{\ell,s,e} \right) \zeta_s - \zeta_{t+1} + \sum_{s=t+2}^{T+1} 0 \zeta_s  \le  V_{\textnormal{max}} \tag{3d} \label{4-d}\quad  \forall  t \in [T]\\
   &&& \quad \forall \bzeta \equiv (\zeta_1,\ldots,\zeta_{T+1})\in \mathcal{U}.\nonumber
    \end{align}
  % \end{equation} 

Next, we denote $f_s=\frac{1}{\bar{D}_s-\ubar{D}_s}$ for $s=2,...,T+1$, $f_s=0$ for $s=1$,  and $g=T-k+ \sum_{s=2}^{T+1} \frac{ \ubar{D}_s}{\bar{D}_s - \ubar{D}_s}$. Then, we can rewrite the budget uncertainty set $\mathcal{U}$ as  $\{\zeta_1 \in \mathcal{U}_1, \ldots,\zeta_T \in \mathcal{U}_T: \bff^\intercal \bzeta \le g\}$. Now we consider the generic form of the problem with the notation of \eqref{set}, where the feasible region becomes
\begin{align*}
\mathcal{Y} \triangleq \left \{ (\by,c_0): \;  \max_{\zeta_1 \in \mathcal{U}_1, \ldots,\zeta_T \in \mathcal{U}_T, \bff^\intercal\bzeta\le g}  \left \{ \sum_{t=1}^T \ba_{i,t}^\intercal  \left(\sum_{s=1}^t \by_{t,s} \zeta_s \right) - \sum_{t=1}^T b_{i,t}   \zeta_t \right \}  \le c_i  \;\; \forall i \in \{0,\ldots,m\} \right \}.
\end{align*}
Furthermore, we denote $\mu_i\ge 0$ as the dual variable for $\bff^\intercal \bzeta \le g$ in the $i$-th constraint, and denote
\begin{align*}%\label{set}
\mathcal{Y}_{\mu} \triangleq \left \{ (\by,c_0): \;  \max_{\zeta_1 \in \mathcal{U}_1, \ldots,\zeta_T \in \mathcal{U}_T}  \left \{ \sum_{t=1}^T \ba_{i,t}^\intercal  \left(\sum_{s=1}^t \by_{t,s} \zeta_s \right) - \sum_{t=1}^T b_{i,t}   \zeta_t + \mu_i (g-\bff^\intercal \bzeta) \right \}  \le c_i  \;\; \forall i \in \{0,\ldots,m\} \right \}.
\end{align*}
The next lemma presents the connection between $\mathcal{Y}$ and $\mathcal{Y}_{\bmu}$:
\begin{lemma}\label{lem:equi}
$\mathcal{Y}=\cup_{\bmu\ge \bzero} \mathcal{Y}_{\bmu}$.
\end{lemma}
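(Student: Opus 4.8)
The plan is to prove the set identity $\mathcal{Y} = \bigcup_{\bmu \ge \bzero} \mathcal{Y}_{\bmu}$ by establishing the two inclusions separately, with the workhorse being Lagrangian duality applied separately to each of the $m+1$ inner maximization problems. First I would fix a candidate pair $(\by, c_0)$ and a constraint index $i \in \{0,\ldots,m\}$, and observe that the $i$-th defining inequality of $\mathcal{Y}$ is exactly the statement that the optimal value of the linear program
\begin{align*}
\max_{\bzeta} \left\{ \textstyle\sum_{t=1}^T \ba_{i,t}^\intercal \big(\textstyle\sum_{s=1}^t \by_{t,s}\zeta_s\big) - \textstyle\sum_{t=1}^T b_{i,t}\zeta_t : \; \zeta_1 \in \mathcal{U}_1,\ldots,\zeta_T\in\mathcal{U}_T, \; \bff^\intercal\bzeta \le g \right\}
\end{align*}
is at most $c_i$. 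Dualizing only the single coupling constraint $\bff^\intercal\bzeta \le g$ (keeping the box constraints in the inner problem), standard Lagrangian/LP duality gives that this optimal value equals $\min_{\mu_i \ge 0}$ of the inner-maximization value appearing in the definition of $\mathcal{Y}_{\bmu}$ for that constraint. This is the key identity; it holds because the inner maximization over a box is a bounded LP, so strong duality applies with no duality gap, and because the feasible region of $\mathcal{Y}$-type problems is nonempty and the objective bounded (which I would justify via Assumption~\ref{ass:1}, exactly as in earlier lemmas).

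For the inclusion $\bigcup_{\bmu \ge \bzero}\mathcal{Y}_{\bmu} \subseteq \mathcal{Y}$: suppose $(\by, c_0) \in \mathcal{Y}_{\bmu}$ for some $\bmu \ge \bzero$. Then for each $i$, the value $\max_{\bzeta \in \mathcal{U}_1\times\cdots\times\mathcal{U}_T}\{\cdots + \mu_i(g - \bff^\intercal\bzeta)\} \le c_i$. Weak duality (or simply: for any feasible $\bzeta$ of the $\mathcal{Y}$-problem, the extra term $\mu_i(g - \bff^\intercal\bzeta) \ge 0$ since $\bff^\intercal\bzeta \le g$ there) shows that the $\mathcal{Y}$-side maximization is dominated by the $\mathcal{Y}_{\bmu}$-side maximization, hence also $\le c_i$; thus $(\by,c_0)\in\mathcal{Y}$. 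For the reverse inclusion $\mathcal{Y} \subseteq \bigcup_{\bmu \ge \bzero}\mathcal{Y}_{\bmu}$: suppose $(\by,c_0) \in \mathcal{Y}$. By the duality identity above, for each $i$ there exists a minimizer $\mu_i^\star \ge 0$ (the inner LP is bounded and the dual is feasible, so the dual optimum is attained) such that $\max_{\bzeta\in\mathcal{U}_1\times\cdots\times\mathcal{U}_T}\{\cdots + \mu_i^\star(g - \bff^\intercal\bzeta)\}$ equals the $\mathcal{Y}$-side value, which is $\le c_i$. Collecting $\bmu^\star = (\mu_0^\star,\ldots,\mu_m^\star)$ gives $(\by,c_0) \in \mathcal{Y}_{\bmu^\star}$.

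The main obstacle — and the place requiring care rather than cleverness — is verifying that the single-constraint Lagrangian dual is attained, i.e. that a finite optimal $\mu_i^\star \ge 0$ exists for each $i$. This is where I expect subtlety: if $\bff^\intercal\bzeta \le g$ is never active on the box, or if the box optimum already satisfies it, then $\mu_i^\star = 0$ works; otherwise attainment follows from LP duality because the primal (the inner max over the box intersected with the halfspace) is a feasible bounded LP (feasibility of the intersection is part of the hypothesis that $\mathcal{U}$ is the feasible uncertainty set, and boundedness follows since the box is compact), so its dual is feasible and attains the same value. I would also need to handle the edge case where some constraint's inner maximum is $+\infty$ (then $(\by,c_0)\notin\mathcal{Y}$ and there is nothing to prove on that side), and confirm that the convention $0/0 = 0$ and the degenerate box coordinate $s=1$ (where $\ubar D_1 = \bar D_1 = 1$, so $f_1 = 0$) cause no trouble — they do not, since $\zeta_1$ is simply pinned to $1$ and drops out of the coupling constraint. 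With these pieces assembled, both inclusions follow and the lemma is proved.
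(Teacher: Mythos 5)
Your proposal is correct and takes essentially the same route as the paper: weak duality (nonnegativity of the Lagrangian term $\mu_i(g-\bff^\intercal\bzeta)$ on the budget set) gives $\bigcup_{\bmu\ge\bzero}\mathcal{Y}_{\bmu}\subseteq\mathcal{Y}$, and strong LP duality of each inner maximization, constraint by constraint, supplies the multipliers $\mu_i^\star$ for the reverse inclusion. Your extra care about dual attainment and edge cases is fine but not needed beyond what compactness of the box and standard LP duality already provide, which is exactly what the paper invokes.
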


\begin{proof}{Proof.}
Consider any $\bmu\ge \bzero$, and let $(\by,c_0)\in \mathcal{Y}_{\bmu}$. Then it holds for each $i \in \{0,\ldots,m\}$ that
\begin{align*}
&\max_{\zeta_1 \in \mathcal{U}_1, \ldots,\zeta_T \in \mathcal{U}_T, \bff^\intercal\bzeta\le g}  \left \{ \sum_{t=1}^T \ba_{i,t}^\intercal  \left(\sum_{s=1}^t \by_{t,s} \zeta_s \right) - \sum_{t=1}^T b_{i,t}   \zeta_t \right \}\\
&\le \max_{\zeta_1 \in \mathcal{U}_1, \ldots,\zeta_T \in \mathcal{U}_T}  \left \{  \sum_{t=1}^T \ba_{i,t}^\intercal  \left(\sum_{s=1}^t \by_{t,s} \zeta_s \right) - \sum_{t=1}^T b_{i,t}   \zeta_t + \mu_i (g-\bff^\intercal\bzeta) \right \}\\
&\le c_i,
\end{align*}
where the first inequality uses weak duality and the second inequality is from the definition of $\mathcal{Y}_{\bmu}$. Thus, it holds for all $\mu_i\ge 0$ that $\mathcal{Y}_{\bmu}\subseteq \mathcal{Y}$, whereby
\begin{align}\label{eq:equi_1}
    \cup_{\bmu\ge \bzero}\mathcal{Y}_{\bmu}\subseteq \mathcal{Y}.
\end{align}

On the other hand, for any $(\by,c_0)\in \mathcal{Y}$, it holds by strong duality of linear optimization that there exists $\bmu\ge \bzero$ such that
\begin{align*}
    &\max_{\zeta_1 \in \mathcal{U}_1, \ldots,\zeta_T \in \mathcal{U}_T}  \left \{ \sum_{t=1}^T \ba_{i,t}^\intercal  \left(\sum_{s=1}^t \by_{t,s} \zeta_s \right) - \sum_{t=1}^T b_{i,t}   \zeta_t + \mu_i (g-\bff^\intercal\xi) \right \}\\
    =& \max_{\zeta_1 \in \mathcal{U}_1, \ldots,\zeta_T \in \mathcal{U}_T, \bff^\intercal\zeta\le g}  \left \{ \sum_{t=1}^T \ba_{i,t}^\intercal  \left(\sum_{s=1}^t \by_{t,s} \zeta_s \right) - \sum_{t=1}^T b_{i,t}   \zeta_t \right \} \le c_i, \forall i\in \{0,...,m\} \ ,
\end{align*}
thus it holds that $y\in \mathcal{Y}_{\bmu}$, whereby
\begin{align}\label{eq:equi_2}
    \mathcal{Y} \subseteq \cup_{\bmu\ge \bzero}\mathcal{Y}_{\bmu}\ .
\end{align}
We finish the proof by combining \eqref{eq:equi_1} and \eqref{eq:equi_2}.\halmos
\end{proof}

The next lemma shows that an extreme point of $\mathcal{Y}$ is also an extreme point of $\mathcal{Y}_{\bmu}$.
\begin{lemma}\label{lem:extreme_point}
For any extreme point $(\by,c_0)$ of $\mathcal{Y}$, there exists $\bmu\ge \bzero$, such that $(\by,c_0)$ is an extreme point of $\mathcal{Y}_{\bmu}$.
\end{lemma}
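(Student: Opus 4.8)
\textbf{Proof proposal for Lemma~\ref{lem:extreme_point}.}

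The plan is to exploit the set relation $\mathcal{Y} = \cup_{\bmu \ge \bzero} \mathcal{Y}_{\bmu}$ from Lemma~\ref{lem:equi} together with the fact that each $\mathcal{Y}_{\bmu}$ is a subset of $\mathcal{Y}$. First I would fix an extreme point $(\by,c_0)$ of $\mathcal{Y}$. Since $(\by,c_0) \in \mathcal{Y}$, Lemma~\ref{lem:equi} guarantees that $(\by,c_0) \in \mathcal{Y}_{\bmu}$ for \emph{some} $\bmu \ge \bzero$. Fix such a $\bmu$. I then claim this is the $\bmu$ we want, i.e., that $(\by,c_0)$ is an extreme point of $\mathcal{Y}_{\bmu}$. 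The key structural fact is that $\mathcal{Y}_{\bmu} \subseteq \mathcal{Y}$ (this is inequality~\eqref{eq:equi_1} in the proof of Lemma~\ref{lem:equi}, specialized to the single chosen $\bmu$, since $\mathcal{Y}_{\bmu} \subseteq \cup_{\bmu' \ge \bzero} \mathcal{Y}_{\bmu'} \subseteq \mathcal{Y}$).

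The main step is then a general and elementary observation about convex sets: if $K' \subseteq K$ and a point $p$ lies in $K'$ and is an extreme point of the larger set $K$, then $p$ is an extreme point of $K'$. Indeed, suppose for contradiction that $p = \tfrac12(q_1 + q_2)$ with $q_1, q_2 \in K'$ and $q_1 \neq q_2$; then $q_1, q_2 \in K$ as well, contradicting that $p$ is extreme in $K$. Applying this with $K = \mathcal{Y}$, $K' = \mathcal{Y}_{\bmu}$, and $p = (\by,c_0)$ yields the conclusion. I would also note that $\mathcal{Y}_{\bmu}$ is a nonempty convex polyhedron — nonempty because it contains $(\by,c_0)$, convex/polyhedral because, as in the manipulations in the proof of Lemma~\ref{lem:2}, the inner maximization over the box $\mathcal{U}_1 \times \cdots \times \mathcal{U}_T$ reduces each constraint to finitely many linear inequalities in $(\by,c_0)$ (the perturbation term $\mu_i(g - \bff^\intercal \bzeta)$ is linear in $\bzeta$, so the maximand remains linear in $\bzeta$ and the max over a box is attained at a vertex). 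This makes the notion ``extreme point of $\mathcal{Y}_{\bmu}$" meaningful and consistent with the later invocation of Lemma~\ref{lem:2}.

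I do not anticipate a serious obstacle here; the lemma is essentially a bookkeeping step that bridges Lemma~\ref{lem:equi} to the machinery of Lemmas~\ref{lem:1}--\ref{lem:zeros}. The only point requiring mild care is making sure the chosen $\bmu$ is fixed \emph{before} the extreme-point argument — one should not re-quantify over $\bmu$ midway — and confirming that $\mathcal{Y}_{\bmu}$ really is polyhedral so that extreme points are well-defined; both are handled by the remarks above. The statement then follows, and the subsequent argument (mirroring Step 2 and Step 3 of the proof of Theorem~\ref{thm:2}, now applied to $\mathcal{Y}_{\bmu}$ with the extra constant terms $\mu_i g$ absorbed into the right-hand sides $c_i$ and the extra linear terms $-\mu_i f_s$ absorbed into the coefficients $b_{i,s}$) will deliver the claimed sparsity bound $\|\bar{\by}\|_0 \le 2 + 8E + 10T + 6ET + 12kT$, with the additional $12kT$ term arising from the new \eqref{line:type1}-type equations induced by the budget multipliers.
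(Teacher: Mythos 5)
Your proposal is correct and follows essentially the same route as the paper: fix the $\bmu$ given by Lemma~\ref{lem:equi}, use the inclusion $\mathcal{Y}_{\bmu} \subseteq \mathcal{Y}$, and observe that an extreme point of the larger set lying in the smaller set is an extreme point of the smaller set (the paper phrases this as a contradiction with a convex combination of two points of $\mathcal{Y}_{\bmu}$, which by the inclusion lie in $\mathcal{Y}$). Your additional remark that $\mathcal{Y}_{\bmu}$ is polyhedral is fine but not needed for the lemma itself.
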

\begin{proof}{Proof.}
It follows from Lemma \ref{lem:equi} that there exists $\bmu\ge \bzero$ such that $(\by,c_0)\in \mathcal{Y}_{\bmu}$. Suppose $(\by,c_0)$ is not an extreme point of $\mathcal{Y}_{\bmu}$, then there exist $(\by^1,c_0^1), (\by^2,c_0^2)\in \mathcal{Y}_{\bmu}$ and $0<\lambda<1$ such that $(\by,c_0)=\lambda (\by^1,c_0^1) + (1-\lambda) (\by^2,c_0^2)$. Notice that it follows from Lemma~\ref{lem:equi} that $(\by^1,c_0^1), (\by^2,c_0^2)\in \mathcal{Y}$. This shows that $(\by,c_0)$ is not an extreme point of $\mathcal{Y}$, which leads to a contradiction. Therefore, it must be the case that $(\by,c_0)$ is an extreme point of $\mathcal{Y}_{\bmu}$. \halmos
\end{proof}

Notice that the uncertainty set for $\mathcal{Y}_{\bmu}$ only involves box constraint. Now we can use the above argument to reduce \eqref{budget} with non-box constraint to a similar problem with box constraint by dualizing the budget constraint. 

Another key observation is that the constraint \eqref{4-c} for $t\le T+1-k$ only involves $\zeta_1,...,\zeta_t$ and is independent of $\zeta_{T+1-k+1},\zeta_{T+1-k+2},...,\zeta_{T+1}$. Furthermore, notice that for any $\zeta_1\in \mathcal{U}_1, \zeta_2\in \mathcal{U}_2, ..., \zeta_{T+1-k}\in \mathcal{U}_{T+1-k}$, we can find $\bzeta \equiv (\zeta_1,...,\zeta_{T+1})\in \mathcal{U}$ with the same value of $\zeta_1,\ldots,\zeta_{T+1-k}$ due to the definition of $\mathcal U$. This means that the effective uncertainty set for \eqref{4-c} with $t\le T+1-k$ is exactly a box constraint. The same argument works for \eqref{4-d} for $t\le T+1-k$.

Therefore, for any extreme point $\bar \by$ of the constraint set in \eqref{budget}, it follows from Lemma \ref{lem:extreme_point} that there exists $\mu^a\ge 0$, $\mu^b_e\ge 0$ for $e\in [E]$, $\ubar{\mu}^c_{et}, \bar{\mu}^c_{et}\ge 0$ for $e\in[E]$, $t\in\{T+2-k,...,T+1\}$, $\ubar{\mu}_t^d,\bar{\mu}_t^d\ge 0$ for $t\in \{T+2-k,...,T+1\}$, such that $\bar \by$ is an extreme point to the following set:

 \begin{align}
    & && \sum_{s=1}^{T +1}  \left( -\mu^a f_s +\sum_{t=s}^{T+1} \sum_{e=1}^E c_{te} y_{t,s,e} \right)  \zeta_s \le c_0 -\mu^a g \tag{4a} \label{budget-2a} \\
    &&&\sum_{s=1}^{T+1} \left( -\mu^b_e f_s +\sum_{t=s}^{T+1} y_{t,s,e} \right) \zeta_s \le  Q_e -\mu^b_e g \quad \quad \quad   \forall e \in[E] \tag{4b} \label{budget-2b} \\
   &&& 0 \le  \sum_{s=1}^{t}  y_{t,s,e} \zeta_s  \le p_{te} \quad \forall  e\in[E],\; t \in \{1,...,T+1-k\} \tag{4c-i} \label{budget-2c-i}\\
   &&& -\ubar{\mu}^c_{et} g \le  \sum_{s=1}^{t}  \left(y_{t,s,e}-\ubar{\mu}^c_{et}f_s\right) \zeta_s   \quad \forall  e\in[E],\; t \in \{T+2-k,..., T+1\} \tag{4c-ii-LB} \label{budget-2c-ii-LB}\\
   &&&  \sum_{s=1}^{t}  \left(y_{t,s,e}-\bar{\mu}^c_{et}f_s\right) \zeta_s  \le  p_{te} -\bar{\mu}^c_{et} g \quad \forall  e\in[E],\; t \in \{T+2-k,..., T+1\} \tag{4c-ii-UB} \label{budget-2c-ii-UB}\\
   &&&V_{\textnormal{min}} \le  v_1 + \left( \sum_{\ell=1}^t \sum_{e \in [E]} y_{\ell,1,e} \right) \zeta_1 \nonumber \\
   &&& \quad \quad \quad \quad + \sum_{s=2}^t \left(-1 + \sum_{\ell=s}^t \sum_{e \in [E]} y_{\ell,s,e} \right) \zeta_s - \zeta_{t+1} \le  V_{\textnormal{max}}   \tag{4d-i} \label{budget-2d-i}\quad  \forall  t \in \{1,...,T+1-k\}\\
   &&&V_{\textnormal{min}} -\ubar{\mu}_t^d g \le  v_1 + \left(-\ubar{\mu}_t^d f_1+ \sum_{\ell=1}^t \sum_{e \in [E]} y_{\ell,1,e} \right) \zeta_1 \nonumber \\
   &&& \quad \quad \quad \quad + \sum_{s=2}^t \left(-1 -\ubar{\mu}_t^d f_s + \sum_{\ell=s}^t \sum_{e \in [E]} y_{\ell,s,e} \right) \zeta_s - \zeta_{t+1}    \tag{4d-ii-LB} \label{budget-2d-ii-LB}\quad  \forall  t \in \{T+2-k,..., T\}\\
   &&&  v_1 + \left( -\bar{\mu}_t^d f_1 +\sum_{\ell=1}^t \sum_{e \in [E]} y_{\ell,1,e} \right) \zeta_1 \nonumber \\
   &&& \quad \quad \quad \quad + \sum_{s=2}^t \left(-1 -\bar{\mu}_t^d f_s + \sum_{\ell=s}^t \sum_{e \in [E]} y_{\ell,s,e} \right) \zeta_s - \zeta_{t+1}   \le  V_{\textnormal{max}} -\bar{\mu}_t^d g \tag{4d-ii-UB} \label{budget-2d-ii-UB}\quad  \forall  t \in\{T+2-k,..., T\}\\
   &&& \quad \forall \zeta_1 \in \mathcal{U}_1,\ldots,\zeta_{T+1} \in \mathcal{U}_{T+1} \nonumber.
    \end{align}

Now notice that the uncertainty set is a box constraint, and we can utilize the same arguments as in the proof of Theorem \ref{thm:main} for the proof. We can create a similar set of equality systems as in \eqref{line:hard_1a}-\eqref{line:easy_1d_iv}. Consider an extreme point $\bar y$ to the constraint set \eqref{budget-2a}-\eqref{budget-2d-ii-UB}. It follows from Lemma~\ref{lem:2} that there exists 
\begin{itemize}
\item
index sets $\mathcal{I}^{\ref{budget-2b},\bar{\by}} \subseteq [E]$,   $\ubar{\mathcal{I}}^{\ref{budget-2c-i},\bar{\by}},\bar{\mathcal{I}}^{\ref{budget-2c-i},\bar{\by}} \subseteq [T+1-k]\times[E]$, ${\mathcal{I}}^{\ref{budget-2c-ii-LB},\bar{\by}},{\mathcal{I}}^{\ref{budget-2c-ii-UB},\bar{\by}} \subseteq \{T+2-k,...,T+1\}\times[E]$, $\ubar{\mathcal{I}}^{\ref{budget-2d-i},\bar{\by}},\bar{\mathcal{I}}^{\ref{budget-2d-i},\bar{\by}}  \subseteq [T+1-k]$ and  ${\mathcal{I}}^{\ref{budget-2d-ii-LB},\bar{\by}},{\mathcal{I}}^{\ref{budget-2d-ii-UB},\bar{\by}}  \subseteq \{T+2-k,...,T+1\}$; 

\item index sets $\mathcal{T}^{\ref{budget-2a},\bar{\by}} \subseteq [T+1]$, $\mathcal{T}^{\ref{budget-2b},\bar{\by}}_e \subseteq [T+1]$ for each $e \in \mathcal{I}^{\ref{budget-2b},\bar{\by}}$,  
$\ubar{\mathcal{T}}^{\ref{budget-2c-i},\bar{\by}}_{t,e} \subseteq [T+1-k]$ for each $(t,e) \in \ubar{\mathcal{I}}^{\ref{budget-2c-i},\bar{\by}}$, 
$\bar{\mathcal{T}}^{\ref{budget-2c-i},\bar{\by}}_{t,e} \subseteq [T+1-k]$  for each $(t,e) \in \bar{\mathcal{I}}^{\ref{budget-2c-i},\bar{\by}}$, 
${\mathcal{T}}^{\ref{budget-2c-ii-LB},\bar{\by}}_{t,e} \subseteq \{T+2-k,...,T+1\}$ for each $(t,e) \in {\mathcal{I}}^{\ref{budget-2c-ii-LB},\bar{\by}}$, 
${\mathcal{T}}^{\ref{budget-2c-ii-UB},\bar{\by}}_{t,e} \subseteq \{T+2-k,...,T+1\}$  for each $(t,e) \in {\mathcal{I}}^{\ref{budget-2c-ii-UB},\bar{\by}}$,
$\ubar{\mathcal{T}}^{\ref{budget-2d-i},\bar{\by}}_{t} \subseteq [T+1-k]$ for each $t \in \ubar{\mathcal{I}}^{\ref{budget-2d-i},\bar{\by}}$,   $\bar{\mathcal{T}}^{\ref{budget-2d-i},\bar{\by}}_{t} \subseteq [T+1-k]$ for each $t \in \bar{\mathcal{I}}^{\ref{budget-2d-i},\bar{\by}}$, ${\mathcal{T}}^{\ref{budget-2d-ii-LB},\bar{\by}}_{t} \subseteq \{T+2-k,...,T+1\}$ for each $t \in {\mathcal{I}}^{\ref{budget-2d-ii-LB},\bar{\by}}$,   ${\mathcal{T}}^{\ref{budget-2d-ii-UB},\bar{\by}}_{t} \subseteq \{T+2-k,...,T+1\}$ for each $t \in {\mathcal{I}}^{\ref{budget-2d-ii-UB},\bar{\by}}$;

\item  hyperplanes $(\balpha^{\ref{budget-2a},\bar{\by}},\beta^{\ref{budget-2a},\bar{\by}})$, $(\balpha^{\ref{budget-2b},\bar{\by}}_e,\beta^{\ref{budget-2b},\bar{\by}}_e)$ for each $e \in {\mathcal{I}}^{\ref{budget-2b},\bar{\by}}$, 
$(\ubar{\balpha}^{\ref{budget-2c-i},\bar{\by}}_{t,e},\ubar{\beta}^{\ref{budget-2c-i},\bar{\by}}_{t,e})$ for each $(t,e) \in \ubar{\mathcal{I}}^{\ref{budget-2c-i},\bar{\by}}$, 
  $(\bar{\balpha}^{\ref{budget-2c-i},\bar{\by}}_{t,e},\bar{\beta}^{\ref{budget-2c-i},\bar{\by}}_{t,e})$ for each $(t,e) \in \bar{\mathcal{I}}^{\ref{budget-2c-i},\bar{\by}}$,
  $({\balpha}^{\ref{budget-2c-ii-LB},\bar{\by}}_{t,e},{\beta}^{\ref{budget-2c-ii-LB},\bar{\by}}_{t,e})$ for each $(t,e) \in {\mathcal{I}}^{\ref{budget-2c-ii-LB},\bar{\by}}$, 
  $({\balpha}^{\ref{budget-2c-ii-UB},\bar{\by}}_{t,e},{\beta}^{\ref{budget-2c-ii-UB},\bar{\by}}_{t,e})$ for each $(t,e) \in {\mathcal{I}}^{\ref{budget-2c-ii-UB},\bar{\by}}$,
  $(\ubar{\balpha}^{\ref{budget-2d-i},\bar{\by}}_{t},\ubar{\beta}^{\ref{budget-2d-i},\bar{\by}}_{t})$ for each $t \in \ubar{\mathcal{I}}^{\ref{budget-2d-i}, \bar{\by}}$, 
  $(\bar{\balpha}^{\ref{budget-2d-i},\bar{\by}}_{t},\bar{\beta}^{\ref{budget-2d-i},\bar{\by}}_{t})$ for each $t \in \bar{\mathcal{I}}^{\ref{budget-2d-i}, \bar{\by}}$ ,
  $({\balpha}^{\ref{budget-2d-ii-LB},\bar{\by}}_{t},{\beta}^{\ref{budget-2d-ii-LB},\bar{\by}}_{t})$ for each $t \in {\mathcal{I}}^{\ref{budget-2d-ii-LB}, \bar{\by}}$, 
  $(\bar{\balpha}^{\ref{budget-2d-ii-UB},{\by}}_{t},{\beta}^{\ref{budget-2d-ii-UB},\bar{\by}}_{t})$ for each $t \in {\mathcal{I}}^{\ref{budget-2d-ii-UB}, \bar{\by}}$ ,
  \end{itemize}
such that $\bar{\by}$ is the unique solution to  the following system of equations. % optimization problem:
\begin{align}
    %\begin{aligned}
&\balpha^{\ref{budget-2a},\bar{\by}} \cdot \by = \beta^{\ref{budget-2a},\bar{\by}}  \tag{HARD-4a} \label{line:budget_hard_5a} \\
 &\balpha^{\ref{budget-2b},\bar{\by}}_e \cdot \by = \beta^{\ref{budget-2b},\bar{\by}}_e && \forall e \in \mathcal{I}^{\ref{budget-2b},\bar{\by}}\tag{HARD-4b} \label{line:budget_hard_5b}\\
&\ubar{\balpha}^{\ref{budget-2c-i},\bar{\by}}_{t,e} \cdot \by = \ubar{\beta}^{\ref{budget-2c-i},\bar{\by}}_{t,e} && \forall (t,e) \in  \ubar{\mathcal{I}}^{\ref{budget-2c-i},\bar{\by}}  \tag{HARD-4c-i-LB} \label{line:budget_hard_5c_lb}\\
&\bar{\balpha}^{\ref{budget-2c-i},\bar{\by}}_{t,e} \cdot \by = \bar{\beta}^{\ref{budget-2c-i},\bar{\by}}_{t,e} && \forall (t,e) \in  \bar{\mathcal{I}}^{\ref{budget-2c-i},\bar{\by}}  \tag{HARD-4c-i-UB} \label{line:budget_hard_5c_ub}\\
&{\balpha}^{\ref{budget-2c-ii-UB},\bar{\by}}_{t,e} \cdot \by = {\beta}^{\ref{budget-2c-ii-UB},\bar{\by}}_{t,e} && \forall (t,e) \in  {\mathcal{I}}^{\ref{budget-2c-ii-UB},\bar{\by}}  \tag{HARD-4c-ii-LB} \label{line:budget_hard_5c_ii_lb}\\
&{\balpha}^{\ref{budget-2c-ii-LB},\bar{\by}}_{t,e} \cdot \by = {\beta}^{\ref{budget-2c-ii-LB},\bar{\by}}_{t,e} && \forall (t,e) \in  {\mathcal{I}}^{\ref{budget-2c-ii-LB},\bar{\by}}  \tag{HARD-4c-ii-UB} \label{line:budget_hard_5c_ii_ub}\\
&\ubar{\balpha}^{\ref{budget-2d-i},\bar{\by}}_{t} \cdot \by = \ubar{\beta}^{\ref{budget-2d-i},\bar{\by}}_{t} && \forall t \in  \ubar{\mathcal{I}}^{\ref{budget-2d-i},\bar{\by}}  \tag{HARD-4d-i-LB} \label{line:budget_hard_5d_lb}\\
&\bar{\balpha}^{\ref{budget-2d-i},\bar{\by}}_{t} \cdot \by = \bar{\beta}^{\ref{budget-2d-i},\bar{\by}}_{t} && \forall t \in  \bar{\mathcal{I}}^{\ref{budget-2d-i},\bar{\by}} 
\tag{HARD-4d-i-UB} \label{line:budget_hard_5d_ub}\\
&{\balpha}^{\ref{budget-2d-ii-UB},\bar{\by}}_{t} \cdot \by = {\beta}^{\ref{budget-2d-ii-UB},\bar{\by}}_{t} && \forall t \in  {\mathcal{I}}^{\ref{budget-2d-ii-UB},\bar{\by}}  \tag{HARD-4d-ii-LB} \label{line:budget_hard_5d_ii_lb}\\
&{\balpha}^{\ref{budget-2d-ii-LB},\bar{\by}}_{t} \cdot \by = {\beta}^{\ref{budget-2d-ii-LB},\bar{\by}}_{t} && \forall t \in  {\mathcal{I}}^{\ref{budget-2d-ii-LB},\bar{\by}} 
\tag{HARD-4d-ii-UB} \label{line:budget_hard_5d_ii_ub}\\
&\sum_{t=s}^{T+1} \sum_{e=1}^E  c_{te} y_{t,s,e} = \mu^a f_s && \forall s \in \mathcal{T}^{\ref{budget-2a},\bar{\by}}\tag{EASY-4a} \label{line:budget_easy_5a}\\
& \sum_{t=s}^{T+1} y_{t,s,e} =\mu^b_e g && \forall e \in[E], \; s \in \mathcal{T}^{\ref{budget-2b},\bar{\by}}_e\tag{EASY-4b} \label{line:budget_easy_5b}\\
&y_{t,s,e} =0 && \forall e \in[E], \; t \in[T+1-k], \; s \in  \ubar{\mathcal{T}}^{\ref{budget-2c-i},\bar{\by}}_{t,e}  \cup \bar{\mathcal{T}}^{\ref{budget-2c-i},\bar{\by}}_{t,e} \text{ if } s \le t \tag{EASY-4c-i} \label{line:budget_easy_5c_i}\\
&y_{t,s,e} =\ubar{\mu}^c_{et}f_s && \forall e \in[E], \; t \in\{T+2-k,T+1\}, \; s \in  {\mathcal{T}}^{\ref{budget-2c-ii-LB},\bar{\by}}_{t,e}   \text{ if } s \le t \tag{EASY-4c-ii-LB} \label{line:budget_easy_5c_ii_LB}\\
&y_{t,s,e} =\bar{\mu}^c_{et}f_s && \forall e \in[E], \; t \in\{T+2-k,T+1\}, \; s \in   {\mathcal{T}}^{\ref{budget-2c-ii-UB},\bar{\by}}_{t,e} \text{ if } s \le t \tag{EASY-4c-ii-UB} \label{line:budget_easy_5c_ii_UB}\\
& \sum_{\ell=s}^t \sum_{e \in [E]} y_{\ell,s,e}    = 0 && \forall t \in [T+1-k], \; s \in \ubar{\mathcal{T}}^{\ref{budget-2d-i},\bar{\by}}_t \cup \bar{\mathcal{T}}^{\ref{budget-2d-i},\bar{\by}}_t \text{ if } s =1\tag{EASY-4d-i-i} \label{line:budget_easy_5d_i_i}\\
& \sum_{\ell=s}^t \sum_{e \in [E]} y_{\ell,s,e}    = 1 && \forall t \in [T+1-k], \; s \in \ubar{\mathcal{T}}^{\ref{budget-2d-i},\bar{\by}}_t \cup \bar{\mathcal{T}}^{\ref{budget-2d-i},\bar{\by}}_t \text{ if } s \in \{2,\ldots,t\} \tag{EASY-4d-i-ii} \label{line:budget_easy_5d_i_ii}\\
& \sum_{\ell=s}^t \sum_{e \in [E]} y_{\ell,s,e}    = \ubar{\mu}_t^d f_1 && \forall t \in \{T+2-k,T+1\}, \; s \in {\mathcal{T}}^{\ref{budget-2d-ii-LB},\bar{\by}}_t  \text{ if } s =1\tag{EASY-4d-ii-LB-i} \label{line:budget_easy_5d_ii_LB_i}\\
& \sum_{\ell=s}^t \sum_{e \in [E]} y_{\ell,s,e}    = 1 +\ubar{\mu}_t^d f_s && \forall t \in \{T+2-k,T+1\}, \; s \in {\mathcal{T}}^{\ref{budget-2d-ii-LB},\bar{\by}}_t  \text{ if } s \in \{2,\ldots,t\} \tag{EASY-4d-ii-LB-ii} \label{line:budget_easy_5d_ii_LB_ii}\\
& \sum_{\ell=s}^t \sum_{e \in [E]} y_{\ell,s,e}    = \bar{\mu}_t^d f_1 && \forall t \in \{T+2-k,T+1\}, \; s \in {\mathcal{T}}^{\ref{budget-2d-ii-UB},\bar{\by}}_t  \text{ if } s =1\tag{EASY-4d-ii-UB-i} \label{line:budget_easy_5d_ii_UB_i}\\
& \sum_{\ell=s}^t \sum_{e \in [E]} y_{\ell,s,e}    = 1+\bar{\mu}_t^d f_s && \forall t \in \{T+2-k,T+1\}, \; s \in {\mathcal{T}}^{\ref{budget-2d-ii-UB},\bar{\by}}_t \text{ if } s \in \{2,\ldots,t\}. \tag{EASY-4d-ii-UB-ii} \label{line:budget_easy_5d_ii_UB_ii}
 \end{align}

With exactly the same argument as in the step 3 of the proof of Theorem \ref{thm:main}, we have
 \begin{align*}
| \mathcal I|
&\le \underbrace{1}_{\eqref{line:budget_hard_5a}} +  \underbrace{E}_{\eqref{line:budget_hard_5b}} + \underbrace{(T+1-k)E}_{\eqref{line:budget_hard_5c_lb}}  + \underbrace{(T+1-k)E}_{\eqref{line:budget_hard_5c_ub}} + 
\underbrace{kE}_{\eqref{line:budget_hard_5c_ii_lb}}  + \underbrace{kE}_{\eqref{line:budget_hard_5c_ii_ub}}\\
& +\underbrace{T+1-k}_{\eqref{line:budget_hard_5d_lb}}+ \underbrace{T+1-k}_{\eqref{line:budget_hard_5d_ub}}+ \underbrace{k-1}_{\eqref{line:budget_hard_5d_ii_lb}}+ \underbrace{k-1}_{\eqref{line:budget_hard_5d_ii_ub}}
\quad + \underbrace{T+1}_{\eqref{line:budget_easy_5a}} + \underbrace{(T+1)E }_{\eqref{line:budget_easy_5b}} \\
&+ \underbrace{kT }_{\eqref{line:budget_easy_5c_ii_LB}} + 
\underbrace{kT }_{\eqref{line:budget_easy_5c_ii_UB}} +
\underbrace{T-k}_{\eqref{line:budget_easy_5d_i_i}}  +
\underbrace{T-k}_{\eqref{line:budget_easy_5d_i_ii}}  \\
& +
\underbrace{kT}_{\eqref{line:budget_easy_5d_ii_LB_i}} +
\underbrace{kT}_{\eqref{line:budget_easy_5d_ii_LB_ii}} +
\underbrace{kT}_{\eqref{line:budget_easy_5d_ii_UB_i}} +
\underbrace{kT}_{\eqref{line:budget_easy_5d_ii_UB_ii}} \\
&= 1 + 4E + 5T + 3ET + 6kT\ ,
 \end{align*}
 where we treat \eqref{line:budget_easy_5a}, \eqref{line:budget_easy_5b}, \eqref{line:budget_easy_5c_ii_LB}, \eqref{line:budget_easy_5c_ii_UB}, \eqref{line:budget_easy_5d_ii_LB_i}, \eqref{line:budget_easy_5d_ii_LB_ii} \eqref{line:budget_easy_5d_ii_UB_i}, \eqref{line:budget_easy_5d_ii_LB_ii}, \eqref{line:budget_easy_5d_ii_UB_ii} as hard constraints, and we utilize the same argument as in the proof of Theorem \ref{thm:main} for \eqref{line:budget_easy_5d_i_ii}.
Similar to the proof of Theorem \ref{thm:main}, it follows from Lemma \ref{lem:zeros} that
$$
\|\bar \by\|_0\le 2 | \mathcal I|\le 2\pran{1 + 4E + 5T + 3ET + 6kT},
$$
which finishes the proof. 
\halmos
\end{proof}

\section{Proof of Proposition~\ref{prop:productioninventory_reformulation}}

\begin{proof}{Proof of Proposition~\ref{prop:productioninventory_reformulation}.}
Consider any active set $\mathcal{A}$. We observe for each $s \in \{1,\ldots,T+1\}$ that the quantities
\begin{align*}
    - b_{i,s} + \sum_{t,j: (t,s,j) \in \mathcal{A}} a_{i,t,j} y_{t,s,j}
\end{align*}
in the constraints of the optimization problem~\eqref{prob:ldr_1} are given by %corresponding to the 
\begin{align}
    &0 + \sum_{t,e: (t,s,e) \in \mathcal{A}} c_{te} y_{tse} \label{cons:1a} \tag{CONS-$s$-1a} \\
    &0 + \sum_{t: (t,s,e) \in \mathcal{A}}  y_{tse} && \forall e \in [E] \label{cons:1b} \tag{CONS-$s$-1b} \\
    &0 + y_{tse} && \forall t \in \{s,\ldots,T+1\}, e \in [E]: (t,s,e) \in \mathcal{A} \label{cons:1c-UB} \tag{CONS-$s$-1c-UB}\\
    &0 - y_{tse} && \forall t \in \{s,\ldots,T+1\}, e \in [E]: (t,s,e) \in \mathcal{A}\label{cons:1c-LB} \tag{CONS-$s$-1c-LB}\\
    &0 + \sum_{\ell=1}^t \sum_{e \in [E]: (t,s,e) \in \mathcal{A}} y_{\ell s e} && \forall t \in [T] \quad \left[ \textnormal{if } s = 1 \right]\label{cons:1d-UB-i} \tag{CONS-$s$-1d-UB-i}\\
     &0 - \sum_{\ell=1}^t \sum_{e \in [E]: (t,s,e) \in \mathcal{A}} y_{\ell s e} && \forall t \in [T] \quad \left[ \textnormal{if } s = 1 \right] \label{cons:1d-LB-i} \tag{CONS-$s$-1d-LB-i} \\
    &-1 + \sum_{\ell=s}^t \sum_{e \in [E]: (\ell,s,e) \in \mathcal{A}} y_{\ell se} && \forall t \in \{s,\ldots,T\} \quad \left[ \textnormal{if } s \in \{2,\ldots,T\} \right] \label{cons:1d-UB-ii} \tag{CONS-$s$-1d-UB-ii}\\
    &1 - \sum_{\ell=s}^t \sum_{e \in [E]: (\ell,s,e) \in \mathcal{A}} y_{\ell se} && \forall t \in \{s,\ldots,T\} \quad \left[ \textnormal{if } s \in \{2,\ldots,T\} \right] \label{cons:1d-LB-ii} \tag{CONS-$s$-1d-LB-ii} \\
    &1 && \left[\textnormal{if } s \in \{2,\ldots,T+1 \} \right] \label{cons:1d-UB-iii} \tag{CONS-$s$-1d-UB-iii} \\
     &-1 && \left[\textnormal{if } s \in \{2,\ldots,T+1 \} \right]\label{cons:1d-LB-iii} \tag{CONS-$s$-1d-LB-iii}
\end{align}
We now count the number of unique quantities across the periods $s \in \{1,\ldots,T+1\}$ of the optimization problem.
First, we observe that the number of unique quantities among \eqref{cons:1a}, \eqref{cons:1b}, \eqref{cons:1c-UB}, and \eqref{cons:1c-LB} across periods $s \in \{1,\ldots,T+1\}$ is upper bounded by 
\begin{align}
\sum_{s=1}^{T+1} \left(  \underbrace{1}_{\eqref{cons:1a}} +  \underbrace{\sum_{e \in [E]} 1}_{\eqref{cons:1b}} + \underbrace{  \sum_{t=s}^{T+1} \sum_{e \in [E]: (t,s,e) \in \mathcal{A} }2 }_{\eqref{cons:1c-UB} + \eqref{cons:1c-LB}} \right) &= (E+1) (T+1) + 2 | \mathcal{A}|. \label{line:counting:1}
\end{align}
Second, we observe for each $s \in \{2,\ldots,T\}$ and $t \in \{s,\ldots,T\}$ that \begin{align*}
\left[ (t,s,e) \notin \mathcal{A} \textnormal{ for all } e \in [E]\right] \implies  \left[ \sum_{\ell=s}^{t-1} \sum_{e \in [E]: (\ell,s,e) \in \mathcal{A}} y_{\ell se}  =  \sum_{\ell=s}^{t} \sum_{e \in [E]: (\ell,s,e) \in \mathcal{A}} y_{\ell se}  \right].
\end{align*}
Therefore, we observe that the number of unique quantities among \eqref{cons:1d-UB-i}, \eqref{cons:1d-LB-i}, \eqref{cons:1d-UB-ii}, and \eqref{cons:1d-LB-ii} across periods  $s \in \{1,\ldots,T\}$ is upper bounded by
\begin{align}
   \sum_{s=1}^T \underbrace{\left(2 + \sum_{t \in \{s,\ldots,T\}: \textnormal{there exists } e \in [E] \textnormal{ such that } (t,s,e) \in \mathcal{A}} 2 \right)}_{\substack{\eqref{cons:1d-UB-i} + \eqref{cons:1d-LB-i} \\+ \eqref{cons:1d-UB-ii} +\eqref{cons:1d-LB-ii}}} &\le     \sum_{s=1}^T \left(2 + \sum_{t =s}^T \sum_{e \in [E]: (t,s,e) \in \mathcal{A}} 2 \right) \notag \\
    &\le 2T + 2 | \mathcal{A}|. \label{line:counting:2}
\end{align}
Finally, we observe that the number of unique quantities among  \eqref{cons:1d-UB-iii} and \eqref{cons:1d-LB-iii} is given by 
\begin{align}
\sum_{s=2}^{T+1}  \underbrace{  2 }_{\eqref{cons:1d-UB-iii} + \eqref{cons:1d-LB-iii}}  =  2T.  \label{line:counting:3}
\end{align}
Combining lines~\eqref{line:counting:1}, \eqref{line:counting:2}, and \eqref{line:counting:3}, we conclude for the production-inventory problem that 
\begin{align*}
K^{\mathcal{A},1} + \cdots + K^{\mathcal{A},T+1} &\le \underbrace{(E+1)(T+1) + 2 | \mathcal{A}|}_{\eqref{line:counting:1}} \;+\; \underbrace{ 2T + 2 | \mathcal{A}| }_{\eqref{line:counting:2}}\; + \; \underbrace{2T}_{\eqref{line:counting:3}} \\
&= 4 | \mathcal{A}| + ET + 5T + E + 1. 
\end{align*}

\halmos \end{proof}

\begin{proof}{Proof of Lemma~\ref{lem:reform}.} Consider any  $s \in [T]$ and $i,i' \in \{0,\ldots,m\}$ that satisfy the equality
    \begin{align*}
\left(b_{i,s}, \left( a_{i,t,j} \right)_{t,j: (t,s,j) \in \mathcal{A}}\right) = \left(b_{i',s}, \left( a_{i',t,j} \right)_{t,j: (t,s,j) \in \mathcal{A}}\right).
    \end{align*}
  It follows from the above equality that the following equality holds for all $y_{t,s,j} \in \R$, $(t,s,j) \in \mathcal{A}$:   
    \begin{align*}
    \left( - b_{i,s} + \sum_{t,j: (t,s,j) \in \mathcal{A}} a_{i',t,j} y_{t,s,j} \right) = \left( - b_{i',s} + \sum_{t,j: (t,s,j) \in \mathcal{A}} a_{i,t,j} y_{t,s,j} \right),
    \end{align*}
    and the above equality implies that the following two optimization problems are identical:
     \begin{gather*}
%&\max_{\zeta_s \in \mathcal{U}_s} \left \{ \left( - b_{i,s} + \sum_{t,j: (t,s,j) \in \mathcal{A}} a_{i,t,j} y_{t,s,j} \right) \zeta_s \right \} = 
\left[ \begin{aligned}
&\underset{\bar{\omega}_{i,s},\ubar{\omega}_{i,s} \in \R}{\text{minimize}}&& \bar{D}_s  \bar{\omega}_{i,s}  - \ubar{D}_s \ubar{\omega}_{i,s}  \\
    &\textnormal{subject to}&& \bar{\omega}_{i,s} - \ubar{\omega}_{i,s} =  -b_{i,s} + \sum_{t,j: (t,s,j) \in \mathcal{A}}  a_{i,t,j}  y_{t,s,j} \\
        &&& \bar{\omega}_{i,s}, \ubar{\omega}_{i,s} \ge 0 
    \end{aligned} \right]; \\
    \left[ \begin{aligned}
&\underset{\bar{\omega}_{i',s},\ubar{\omega}_{i',s} \in \R}{\text{minimize}}&& \bar{D}_s  \bar{\omega}_{i',s}  - \ubar{D}_s \ubar{\omega}_{i',s}  \\
    &\textnormal{subject to}&& \bar{\omega}_{i',s} - \ubar{\omega}_{i',s} =  -b_{i',s} + \sum_{t,j: (t,s,j) \in \mathcal{A}}  a_{i',t,j}  y_{t,s,j} \\
        &&& \bar{\omega}_{i',s}, \ubar{\omega}_{i',s} \ge 0 
    \end{aligned} \right]. 
\end{gather*}
Because those two optimization problems are identical, they have the same optimal solution. Said another way, the fact that those two optimization problems are identical implies that there are optimal solutions $(\bar{\omega}^*_{i,s}, \ubar{\omega}^*_{i,s}),(\bar{\omega}^*_{i',s}, \ubar{\omega}^*_{i',s})$ for those two problems that satisfy $(\bar{\omega}^*_{i,s}, \ubar{\omega}^*_{i,s}) = (\bar{\omega}^*_{i',s}, \ubar{\omega}^*_{i',s})$. 
    Hence, we conclude that the optimal objective value of the linear optimization problem~\eqref{prob:P_A_1} would not change if we added the constraint
    $$(\bar{\omega}_{i,s}, \ubar{\omega}_{i,s}) = (\bar{\omega}_{i',s}, \ubar{\omega}_{i',s}).$$
    Because the above reasoning can be applied to all $s \in [T]$ and all $i,i' \in \{0,\ldots,m\}$ that map to the same tuple in $\mathcal{K}^{\mathcal{A},s}$, our proof of Lemma~\ref{lem:reform} is complete. 
\halmos \end{proof}
\section{Proof of Propositions~\ref{prop:terminate} and \ref{prop:monotone} in Section \ref{sec:algorithm:activeset}} \label{appx:D}
Our proofs of Propositions~\ref{prop:terminate} and \ref{prop:monotone} and Lemma~\ref{lem:D_A_feas} make use of several intermediary observations and results. First,  we observe using the robust counterpart technique that the dual linear optimization reformulation of \eqref{prob:ldr} is stated as follows:
 \begin{equation} \label{prob:D} \tag{D}
\begin{aligned}
&\underset{ \substack{
\lambda_0,\ldots,\lambda_m \in \R, \\
\zeta_{0,s},\ldots,\zeta_{m,s} \in \R \; \forall s \in [T]}}{\textnormal{maximize}}&& - \sum_{i=1}^m c_i \lambda_i - \sum_{i=0}^m \sum_{s=1}^T  b_{i,s} \zeta_{i,s}   \\
&\textnormal{subject to}&& \begin{aligned}[t]
&\sum_{i=0}^m   a_{i,t,j}  \zeta_{i,s}= 0 && \forall 1 \le s \le t \le T, j \in [n] \\
&  \ubar{D}_s \lambda_i  \le \zeta_{i,s} \le \bar{D}_s \lambda_i && \forall i \in \{0,\ldots,m\}, \; s \in [T]\\
& \lambda_0 = 1 \\
&\lambda_i \ge 0 && \forall i \in \{1,\ldots,m\}.  
\end{aligned}
\end{aligned} 
\end{equation} 
The optimal objective value of \eqref{prob:D} is equal to the optimal objective value of \eqref{prob:ldr}, which implies that the optimal objective value of~\eqref{prob:D} is always less than or equal to the optimal objective value of \eqref{prob:D_A}. Moreover, we observe that the dual of the linear optimization problem~\eqref{prob:P_A_1} is given by
 \begin{equation} \label{prob:D_A_1} \tag{D-$\mathcal{A}$'}
\begin{aligned}
&\underset{ \substack{
\lambda_0,\ldots,\lambda_m \in \R, \\
\zeta_{0,s},\ldots,\zeta_{m,s} \in \R \; \forall s \in [T]}}{\textnormal{maximize}}&& - \sum_{i=1}^m c_i \lambda_i - \sum_{i=0}^m \sum_{s=1}^T  b_{i,s} \zeta_{i,s}   \\
&\textnormal{subject to}&& \begin{aligned}[t]
&\sum_{i=0}^m   a_{i,t,j}  \zeta_{i,s}= 0 && \forall (t,s,j) \in \mathcal{A} \\
&  \ubar{D}_s \lambda_i  \le \zeta_{i,s} \le \bar{D}_s \lambda_i && \forall i \in \{0,\ldots,m\}, \; s \in [T]\\
& \lambda_0 = 1 \\
&\lambda_i \ge 0 && \forall i \in \{1,\ldots,m\}.  
\end{aligned}
\end{aligned} 
\end{equation} 
Finally, for the sake of convenience, we repeat the optimization problem~\eqref{prob:D_A} below:
\begin{equation} \tag{\ref{prob:D_A}}
\begin{aligned}
&\underset{ \substack{
\lambda_0,\ldots,\lambda_m \in \R, \\
\zeta^{\mathcal{A},s}_{k,s} \in \R \; \forall s \in [T], k \in [K^{\mathcal{A},s}]}}{\textnormal{maximize}}&& - \sum_{i=1}^m c_i \lambda_i -  \sum_{s=1}^T  \sum_{k=1}^{K^{\mathcal{A},s}} b^{\mathcal{A},s}_{k,s} \zeta^{\mathcal{A},s}_{k,s}  \\
&\textnormal{subject to}&& \begin{aligned}[t]
&\sum_{k=1}^{K^{\mathcal{A},s}}  a^{\mathcal{A},s}_{k,t,j}  \zeta^{\mathcal{A},s}_{k,s}   = 0 && \forall (t,s,j) \in \mathcal{A} \\
&  \ubar{D}_s \left( \sum_{i \in \{0,\ldots,m\}: \pi^{\mathcal{A},s}(i) = k} \lambda_{i} \right)   \le \zeta^{\mathcal{A},s}_{k,s}  \le \bar{D}_s \left( \sum_{i \in \{0,\ldots,m\}: \pi^{\mathcal{A},s}(i) = k} \lambda_{i} \right)  && \forall  s \in [T], k \in [ K^{\mathcal{A},s}]\\
& \lambda_0 = 1 \\
&\lambda_i \ge 0 && \forall i \in \{1,\ldots,m\}. 
\end{aligned}
\end{aligned} 
\end{equation}

In the following lemma, we provide a transformation of a feasible solution of the optimization problem \eqref{prob:D_A} into a (possibly-infeasible) solution  for the optimization problem~\eqref{prob:D} with the same objective value. 
\begin{lemma} \label{lem:transformation}
Consider any feasible solution of \eqref{prob:D_A}. For each $i \in \{0,\ldots,m\}$ and $s \in [T]$, let
\begin{align*}
\zeta_{i,s} &\triangleq 
\left( \frac{\lambda_i}{\sum_{i' \in \{0,\ldots,m\}: \pi^{\mathcal{A},s}(i') = \pi^{\mathcal{A},s}(i)} \lambda_{i'}} \right) \zeta^{\mathcal{A},s}_{\pi^{\mathcal{A},s}(i),s}
\end{align*}
where we define $0/0$ to be equal to $0$. 
Then the following conditions are satisfied:
\begin{subequations}
\begin{align}
 - \sum_{i=1}^m c_i \lambda_i - \sum_{i=0}^m \sum_{s=1}^T  b_{i,s} \zeta_{i,s}  & = - \sum_{i=1}^m c_i \lambda_i -  \sum_{s=1}^T  \sum_{k=1}^{K^{\mathcal{A},s}} b^{\mathcal{A},s}_{k,s} \zeta^{\mathcal{A},s}_{k,s} \label{line:obj_same} \\
\sum_{i=0}^m   a_{i,t,j}  \zeta_{i,s}&= 0 && \forall (t,s,j) \in \mathcal{A} \label{line:cons:lem:1}\\
  \ubar{D}_s \lambda_i  \le \zeta_{i,s} &\le \bar{D}_s \lambda_i && \forall i \in \{0,\ldots,m\}, \; s \in [T] \label{line:cons:lem:2}\\
 \lambda_0& = 1\label{line:cons:lem:3} \\
\lambda_i & \ge 0 && \forall i \in \{1,\ldots,m\}.\label{line:cons:lem:4}  
\end{align}
\end{subequations}
\end{lemma}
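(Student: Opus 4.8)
The plan is to check the five conditions \eqref{line:obj_same}--\eqref{line:cons:lem:4} one at a time, with almost all of the work concentrated in a single bookkeeping identity relating the grouped dual variables $\zeta^{\mathcal{A},s}_{k,s}$ to the un-grouped variables $\zeta_{i,s}$. Fix a feasible solution $(\lambda_i,\zeta^{\mathcal{A},s}_{k,s})$ of \eqref{prob:D_A}, and for each period $s \in [T]$ and each $k \in [K^{\mathcal{A},s}]$ write $\Lambda^s_k \triangleq \sum_{i \in \{0,\ldots,m\}:\, \pi^{\mathcal{A},s}(i) = k} \lambda_i$ for the total multiplier mass in group $k$. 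Since $\lambda_0 = 1$ and $\lambda_i \ge 0$ for $i \ge 1$ (these are constraints of \eqref{prob:D_A}), every $\Lambda^s_k$ is nonnegative; conditions \eqref{line:cons:lem:3} and \eqref{line:cons:lem:4} are then immediate because the transformation does not alter the $\lambda_i$.

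First I would prove the identity $\sum_{i:\, \pi^{\mathcal{A},s}(i)=k} \zeta_{i,s} = \zeta^{\mathcal{A},s}_{k,s}$ for every $s$ and $k$. If $\Lambda^s_k > 0$ this is just $\sum_{i:\,\pi^{\mathcal{A},s}(i)=k} (\lambda_i / \Lambda^s_k)\, \zeta^{\mathcal{A},s}_{k,s} = \zeta^{\mathcal{A},s}_{k,s}$. If $\Lambda^s_k = 0$, nonnegativity forces $\lambda_i = 0$ for every $i$ in group $k$, so each summand equals $0$ under the convention $0/0 \triangleq 0$; moreover the box constraint of \eqref{prob:D_A} reads $\ubar{D}_s \cdot 0 \le \zeta^{\mathcal{A},s}_{k,s} \le \bar{D}_s \cdot 0$, whence $\zeta^{\mathcal{A},s}_{k,s} = 0$, and the identity again holds. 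The same dichotomy yields the per-variable bound \eqref{line:cons:lem:2}: writing $k = \pi^{\mathcal{A},s}(i)$, when $\Lambda^s_k > 0$ one multiplies the feasibility bound $\ubar{D}_s \Lambda^s_k \le \zeta^{\mathcal{A},s}_{k,s} \le \bar{D}_s \Lambda^s_k$ by $\lambda_i / \Lambda^s_k \ge 0$ to obtain $\ubar{D}_s \lambda_i \le \zeta_{i,s} \le \bar{D}_s \lambda_i$, and when $\Lambda^s_k = 0$ both $\lambda_i$ and $\zeta_{i,s}$ vanish, so the bound is trivial.

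Next I would invoke the defining property of the maps $\pi^{\mathcal{A},s}$, namely that $b_{i,s} = b^{\mathcal{A},s}_{\pi^{\mathcal{A},s}(i),s}$ and $a_{i,t,j} = a^{\mathcal{A},s}_{\pi^{\mathcal{A},s}(i),t,j}$ whenever $(t,s,j) \in \mathcal{A}$. Combining this with the identity above, for each $(t,s,j) \in \mathcal{A}$ we get
\[
\sum_{i=0}^m a_{i,t,j}\, \zeta_{i,s} \;=\; \sum_{k=1}^{K^{\mathcal{A},s}} a^{\mathcal{A},s}_{k,t,j} \sum_{i:\,\pi^{\mathcal{A},s}(i)=k} \zeta_{i,s} \;=\; \sum_{k=1}^{K^{\mathcal{A},s}} a^{\mathcal{A},s}_{k,t,j}\, \zeta^{\mathcal{A},s}_{k,s} \;=\; 0,
\]
the last equality being the first constraint of \eqref{prob:D_A}; this is \eqref{line:cons:lem:1}. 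Applying the same regrouping to the $b$-terms gives $\sum_{i=0}^m b_{i,s}\, \zeta_{i,s} = \sum_{k=1}^{K^{\mathcal{A},s}} b^{\mathcal{A},s}_{k,s}\, \zeta^{\mathcal{A},s}_{k,s}$ for each $s$; summing over $s$ and noting that the term $-\sum_{i=1}^m c_i \lambda_i$ is untouched establishes \eqref{line:obj_same}. I do not anticipate a genuine obstacle here: the only point requiring care is the disciplined treatment of the $0/0$ convention in the degenerate case $\Lambda^s_k = 0$, which is resolved cleanly by the squeeze $\ubar{D}_s \Lambda^s_k \le \zeta^{\mathcal{A},s}_{k,s} \le \bar{D}_s \Lambda^s_k$ forcing $\zeta^{\mathcal{A},s}_{k,s} = 0$.
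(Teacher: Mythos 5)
Your proof is correct and follows essentially the same route as the paper's: regroup the sums over $i$ by the map $\pi^{\mathcal{A},s}$, use that $b_{i,s}$ and $a_{i,t,j}$ (for $(t,s,j)\in\mathcal{A}$) are constant within each group, and invoke the constraints of \eqref{prob:D_A}. Your explicit treatment of the degenerate case $\Lambda^s_k=0$ (where feasibility forces $\zeta^{\mathcal{A},s}_{k,s}=0$) is a point the paper passes over implicitly via the $0/0$ convention, but it is a refinement of the same argument rather than a different approach.
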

Before presenting the proof of the above  Lemma~\ref{lem:transformation}, let us offer an interpretation of its conditions. 
Condition~\eqref{line:obj_same} shows that the transformation from the above lemma yields a solution for the optimization problem~\eqref{prob:D} with the same objective value as the feasible solution for \eqref{prob:D_A}. Conditions~\eqref{line:cons:lem:1}-\eqref{line:cons:lem:4} show that the transformation yields a solution that satisfies all of the constraints of the optimization problem~\eqref{prob:D} with the possible exception of equality constraints of the form $\sum_{i=0}^m   a_{i,t,j}  \zeta_{i,s} = 0$ for some subset of tuples $ (t,s,j) \notin \mathcal{A}$. Having interpreted those conditions, we now present the proof of Lemma~\ref{lem:transformation}.

\begin{proof}{Proof of Lemma~\ref{lem:transformation}.}
Consider any feasible solution of \eqref{prob:D_A}. For each $i \in \{0,\ldots,m\}$ and $s \in [T]$, let
\begin{align}
\zeta_{i,s} &\triangleq 
\left( \frac{\lambda_i}{\sum_{i' \in \{0,\ldots,m\}: \pi^{\mathcal{A},s}(i') = \pi^{\mathcal{A},s}(i)} \lambda_{i'}} \right) \zeta^{\mathcal{A},s}_{\pi^{\mathcal{A},s}(i),s} \label{line:transformation_proof_zeta}
\end{align}
where we define $0/0$ to be equal to $0$. 

We first show that condition~\eqref{line:obj_same} holds. Indeed, 
\begin{align*}
 - \sum_{i=1}^m c_i \lambda_i - \sum_{i=0}^m \sum_{s=1}^T  b_{i,s} \zeta_{i,s} &=\sum_{i=1}^m c_i \lambda_i - \sum_{s=1}^T \sum_{k=1}^{K^{\mathcal{A},s}}  \sum_{i \in \{0,\ldots,m\}: \pi^{\mathcal{A},s}(i) = k}  b_{i,s} \zeta_{i,s}  \\
 &=\sum_{i=1}^m c_i \lambda_i - \sum_{s=1}^T \sum_{k=1}^{K^{\mathcal{A},s}}  b^{\mathcal{A},s}_{k,s} \sum_{i \in \{0,\ldots,m\}: \pi^{\mathcal{A},s}(i) = k} \zeta_{i,s} \\
  &=\sum_{i=1}^m c_i \lambda_i - \sum_{s=1}^T \sum_{k=1}^{K^{\mathcal{A},s}}  b^{\mathcal{A},s}_{k,s} \sum_{i \in \{0,\ldots,m\}: \pi^{\mathcal{A},s}(i) = k} \left( \frac{\lambda_i}{\sum_{i' \in \{0,\ldots,m\}: \pi^{\mathcal{A},s}(i') = \pi^{\mathcal{A},s}(i)} \lambda_{i'}} \right) \zeta^{\mathcal{A},s}_{\pi^{\mathcal{A},s}(i),s} \\
    &=\sum_{i=1}^m c_i \lambda_i - \sum_{s=1}^T \sum_{k=1}^{K^{\mathcal{A},s}}  b^{\mathcal{A},s}_{k,s} \sum_{i \in \{0,\ldots,m\}: \pi^{\mathcal{A},s}(i) = k} \left( \frac{\lambda_i}{\sum_{i' \in \{0,\ldots,m\}: \pi^{\mathcal{A},s}(i') =  k} \lambda_{i'}} \right) \zeta^{\mathcal{A},s}_{k,s} \\
        &=\sum_{i=1}^m c_i \lambda_i - \sum_{s=1}^T \sum_{k=1}^{K^{\mathcal{A},s}}  b^{\mathcal{A},s}_{k,s} \zeta^{\mathcal{A},s}_{k,s}. 
\end{align*}
The first and second equalities follow from algebra. The third equality follows from line~\eqref{line:transformation_proof_zeta}. The fourth and fifth equalities follow from algebra. 

We first show that condition~\eqref{line:cons:lem:1} holds. Indeed,  we observe for each $(t,s,j) \in \mathcal{A}$ that 
\begin{align*}
\sum_{i=0}^m a_{i,t,j} \zeta_{i,s} &= \sum_{k=1}^{K^{\mathcal{A},s}} \sum_{i \in \{0,\ldots,m\}: \pi^{\mathcal{A},s}(i) = k}  a_{i,t,j} \zeta_{i,s}\\
&=  \sum_{k=1}^{K^{\mathcal{A},s}}a^{\mathcal{A},s}_{k,t,j} \sum_{i \in \{0,\ldots,m\}: \pi^{\mathcal{A},s}(i) = k} \zeta_{i,s}\\
&=  \sum_{k=1}^{K^{\mathcal{A},s}}a^{\mathcal{A},s}_{k,t,j} \sum_{i \in \{0,\ldots,m\}: \pi^{\mathcal{A},s}(i) = k} \left( \frac{\lambda_i}{\sum_{i' \in \{0,\ldots,m\}: \pi^{\mathcal{A},s}(i') = \pi^{\mathcal{A},s}(i)} \lambda_{i'}} \right) \zeta^{\mathcal{A},s}_{\pi^{\mathcal{A},s}(i),s}\\
&=  \sum_{k=1}^{K^{\mathcal{A},s}}a^{\mathcal{A},s}_{k,t,j} \sum_{i \in \{0,\ldots,m\}: \pi^{\mathcal{A},s}(i) = k} \left( \frac{\lambda_i}{\sum_{i' \in \{0,\ldots,m\}: \pi^{\mathcal{A},s}(i') = k} \lambda_{i'}} \right) \zeta^{\mathcal{A},s}_{k,s}\\
&=  \sum_{k=1}^{K^{\mathcal{A},s}}a^{\mathcal{A},s}_{k,t,j} \zeta^{\mathcal{A},s}_{k,s}\\
&= 0. 
\end{align*}
The first and second equalities follow from algebra. The third equality follows from line~\eqref{line:transformation_proof_zeta}. The fourth and fifth equalities follow from algebra. The sixth equality follows from the fact that we are considering a feasible solution of the linear optimization problem~\eqref{prob:D_A}. 

We next show that condition~\eqref{line:cons:lem:2} holds. Indeed, we observe for each $i \in \{0,\ldots,m\}$ and $s \in [T]$ that
\begin{align*}
\zeta_{i,s} &= \left( \frac{\lambda_i}{\sum_{i' \in \{0,\ldots,m\}: \pi^{\mathcal{A},s}(i') = \pi^{\mathcal{A},s}(i)} \lambda_{i'}} \right) \zeta^{\mathcal{A},s}_{\pi^{\mathcal{A},s}(i),s} \\
&\in \left[ \left( \frac{\lambda_i}{\sum_{i' \in \{0,\ldots,m\}: \pi^{\mathcal{A},s}(i') = \pi^{\mathcal{A},s}(i)} \lambda_{i'}} \right) \ubar{D}_s \left( \sum_{i \in \{0,\ldots,m\}: \pi^{\mathcal{A},s}(i) = k} \lambda_{i} \right) , \right.  \\
&\phantom{\in \Big[}\;  \left.   \left( \frac{\lambda_i}{\sum_{i' \in \{0,\ldots,m\}: \pi^{\mathcal{A},s}(i') = \pi^{\mathcal{A},s}(i)} \lambda_{i'}} \right) \bar{D}_s \left( \sum_{i \in \{0,\ldots,m\}: \pi^{\mathcal{A},s}(i) = k} \lambda_{i} \right) \right] \\
&= \left[ \ubar{D}_s \lambda_i, \bar{D}_s,\lambda_i \right]. 
\end{align*}
The first equality follows from line~\eqref{line:transformation_proof_zeta}. The inclusion follows from the fact that  we are considering a feasible solution of the linear optimization problem~\eqref{prob:D_A}. The second equality follows from algebra. 

Because  conditions~\eqref{line:cons:lem:3} and \eqref{line:cons:lem:4} follow from the  fact that  we are considering a feasible solution of the linear optimization problem~\eqref{prob:D_A}, our proof of Lemma~\ref{lem:transformation} is complete. 
\halmos \end{proof}
Equipped with Lemma~\ref{lem:transformation}, we are now ready for the proofs of Propositions~\ref{prop:terminate} and \ref{prop:monotone} and Lemma~\ref{lem:D_A_feas}.
\begin{proof}{Proof of Proposition~\ref{prop:terminate}.}
Consider any optimal solution for \eqref{prob:D_A}, and suppose that the solution satisfies
\begin{align}
 \sum_{i=0}^m   a_{i,t,j} \left( \frac{\lambda_i}{\sum_{i' \in \{0,\ldots,m\}: \pi^{\mathcal{A},s}(i') = \pi^{\mathcal{A},s}(i)} \lambda_{i'}} \right) \zeta^{\mathcal{A},s}_{\pi^{\mathcal{A},s}(i),s} = 0 \quad \forall (t,s,j) \notin \mathcal{A}. \tag{\ref{line:terminate}}
 \end{align} 
 Now, for each $i \in \{0,\ldots,m\}$ and $s \in [T]$, let
\begin{align*}
\zeta_{i,s} &\triangleq 
\left( \frac{\lambda_i}{\sum_{i' \in \{0,\ldots,m\}: \pi^{\mathcal{A},s}(i') = \pi^{\mathcal{A},s}(i)} \lambda_{i'}} \right) \zeta^{\mathcal{A},s}_{\pi^{\mathcal{A},s}(i),s}. 
\end{align*}
In that case, it follows from \eqref{line:terminate} that 
\begin{align}
 \sum_{i=0}^m   a_{i,t,j} \zeta_{i,s} = 0 \quad \forall (t,s,j) \notin \mathcal{A}. \label{line:terminate:reprise}
 \end{align} 
 Therefore, it follows from line~\eqref{line:terminate:reprise} and Lemma~\ref{lem:transformation} that there exists a feasible solution for the optimization problem~\eqref{prob:D} with the same objective value as the optimal objective value of \eqref{prob:D_A}. Hence, we have shown that the optimal objective value of \eqref{prob:D} is equal to the optimal objective value of \eqref{prob:D_A}. Because the optimal objective value of \eqref{prob:D_A} is equal to the optimal objective value of \eqref{prob:ldr}, and because the optimal objective value of \eqref{prob:D} is equal to the optimal objective value of \eqref{prob:ldr}, our proof of Proposition~\ref{prop:terminate} is complete. 
\halmos \end{proof}
\begin{proof}{Proof of Proposition~\ref{prop:monotone}.}
Let $\hat{\lambda}_0,\ldots,\hat{\lambda}_m \in \R$ and $
\hat{\zeta}^{\mathcal{A},s}_{k,s} \in \R \; \forall s \in [T], k \in [K^{\mathcal{A},s}]$ denote an optimal solution  for the optimization problem~\eqref{prob:D_A}. Therefore, it follows from Lemma~\ref{lem:transformation} and from the fact that the optimal objective value of \eqref{prob:D_A} is equal to the optimal objective value of \eqref{prob:D_A_1} that an optimal solution for the optimization problem~\eqref{prob:D_A_1} is given by
$\bar{\lambda}_0,\ldots,\bar{\lambda}_m \in \R$ and $
\bar{\zeta}_{i,s} \in \R \; \forall s \in [T], k \in [K^{\mathcal{A},s}]$, where
\begin{align*}
\bar{\lambda}_i &\triangleq \hat{\lambda}_i && \forall i \in \{0,\ldots,m\};\\
\bar{\zeta}_{i,s} &\triangleq 
\left( \frac{\hat{\lambda}_i}{\sum_{i' \in \{0,\ldots,m\}: \pi^{\mathcal{A},s}(i') = \pi^{\mathcal{A},s}(i)} \hat{\lambda}_{i'}} \right) \hat{\zeta}^{\mathcal{A},s}_{\pi^{\mathcal{A},s}(i),s} && \forall i \in \{0,\ldots,m\},s \in \{1,\ldots,T\}.
\end{align*}
With the above notation, let us define
\begin{align*}
  \mathcal{A}^{=} &\triangleq \left \{(t,s,j) \notin \mathcal{A}:  \quad  \sum_{i=0}^m   a_{i,t,j} \left( \frac{\hat{\lambda}_i}{\sum_{i' \in \{0,\ldots,m\}: \pi^{\mathcal{A},s}(i') = \pi^{\mathcal{A},s}(i)} \hat{\lambda}_{i'}} \right) \hat{\zeta}^{\mathcal{A},s}_{\pi^{\mathcal{A},s}(i),s} = 0 \right \} \\
  &= \left \{(t,s,j) \notin \mathcal{A}:  \quad  \sum_{i=0}^m   a_{i,t,j} \bar{\zeta}_{i,s}= 0 \right \}.
\end{align*}
If $\widetilde{\mathcal{A}} \subseteq \mathcal{A}^=$, then we observe that  $\bar{\lambda}_0,\ldots,\bar{\lambda}_m \in \R$ and $
\bar{\zeta}_{i,s} \in \R \; \forall s \in [T], k \in [K^{\mathcal{A},s}]$ must also be  an optimal solution for the linear optimization problem \textnormal{(D-$\mathcal{A} \cup \widetilde{\mathcal{A}}$')}. Since the optimal objective value of \textnormal{(D-$\mathcal{A} \cup \widetilde{\mathcal{A}}$')} is equal to the optimal objective value of \textnormal{(LDR-$\mathcal{A} \cup \widetilde{\mathcal{A}}$)}, our proof is complete.   
\halmos \end{proof}

\section{Practical Improvements to Algorithm \ref{al:active_set}} \label{sec:algorithm:improvements}
We summarize here several improvements and omitted details for our active set method from \S\ref{sec:algorithm:activeset}. 

\subsection{Efficiently solving \eqref{prob:D_A} by removing redundant constraints}\label{app:solveDA} 
In Step 1 of each iteration of our algorithm, we must solve the linear optimization problem~\eqref{prob:D_A}. To improve the practical efficiency of solving this linear optimization problem~\eqref{prob:D_A}, we can remove inequality constraints that are not going to be binding at optimality. In particular, we remove constraints from \eqref{prob:D_A} using the following lemma:\looseness=-1
\begin{lemma} \label{lem:remove_unnecessary}
Define the following set of tuples:
$$\mathcal{C} \triangleq \left \{(k,s): s \in [T], k \in [K^{\mathcal{A},s}], \left({b}^{\mathcal{A},s}_{ k,s}, \left( a^{\mathcal{A},s}_{k,t,j} \right)_{t,j: (t,s,j) \in \mathcal{A}}\right) = \left(0,(0)_{t,j: (t,s,j) \in \mathcal{A}} \right)  \right \}. $$
Then the following constraints can be simultaneously removed from the linear optimization problem~\eqref{prob:D_A} without changing the optimal objective value:
\begin{align}
    &  \ubar{D}_s \left( \sum_{i \in \{0,\ldots,m\}: \pi^{\mathcal{A},s}(i) = k} \lambda_{i} \right)   \le \zeta^{\mathcal{A},s}_{k,s}  \le \bar{D}_s \left( \sum_{i \in \{0,\ldots,m\}: \pi^{\mathcal{A},s}(i) = k} \lambda_{i} \right)  \quad  \forall  (k,s) \in \mathcal{C}.  \label{line:remove_unnecessary} 
\end{align}
\end{lemma}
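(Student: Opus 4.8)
The plan is to observe that each dual variable $\zeta^{\mathcal{A},s}_{k,s}$ with $(k,s) \in \mathcal{C}$ is, apart from the box constraint \eqref{line:remove_unnecessary}, completely decoupled from the rest of \eqref{prob:D_A}, so that constraint imposes no genuine restriction and can be deleted. Concretely, I would first extract two facts from the definition of $\mathcal{C}$: for $(k,s) \in \mathcal{C}$ the tuple $\big(b^{\mathcal{A},s}_{k,s}, (a^{\mathcal{A},s}_{k,t,j})_{t,j:(t,s,j)\in\mathcal{A}}\big)$ is the zero tuple, hence (i) the coefficient $-b^{\mathcal{A},s}_{k,s}$ of $\zeta^{\mathcal{A},s}_{k,s}$ in the objective of \eqref{prob:D_A} is zero, and (ii) for every $(t,s,j) \in \mathcal{A}$ the coefficient $a^{\mathcal{A},s}_{k,t,j}$ of $\zeta^{\mathcal{A},s}_{k,s}$ in the equality constraint $\sum_{k'} a^{\mathcal{A},s}_{k',t,j}\zeta^{\mathcal{A},s}_{k',s} = 0$ is zero. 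Since the equality constraints and the box constraints are the only constraints of \eqref{prob:D_A} involving the $\zeta$-variables, it follows that for $(k,s) \in \mathcal{C}$ the variable $\zeta^{\mathcal{A},s}_{k,s}$ enters only the single constraint \eqref{line:remove_unnecessary} and enters the objective with coefficient zero.

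The equality of optimal values then follows from a two-sided comparison. Deleting the constraints \eqref{line:remove_unnecessary} yields a relaxation of \eqref{prob:D_A}, so its optimal value is at least that of \eqref{prob:D_A} (both are maximization problems, and the comparison holds in the extended reals). For the reverse inequality, take any feasible point of the relaxed problem; it assigns values to every $\lambda_i$ and every $\zeta^{\mathcal{A},s}_{k,s}$. For each $(k,s) \in \mathcal{C}$, overwrite $\zeta^{\mathcal{A},s}_{k,s}$ with the value $\ubar{D}_s\big(\sum_{i \in \{0,\ldots,m\}:\, \pi^{\mathcal{A},s}(i)=k}\lambda_i\big)$. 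By facts (i)--(ii), this reassignment leaves the objective value and the left-hand side of every equality constraint unchanged, and it satisfies \eqref{line:remove_unnecessary} because $\ubar{D}_s \le \bar{D}_s$ by Assumption~\ref{ass:1} and the multiplier $\sum_{i:\, \pi^{\mathcal{A},s}(i)=k}\lambda_i$ is nonnegative (since $\lambda_0=1$ and $\lambda_i \ge 0$), with the degenerate case of a zero multiplier covered by the convention $\ubar{D}_s\cdot 0 = \bar{D}_s\cdot 0 = 0$. The modified point is feasible for \eqref{prob:D_A} with unchanged objective, so the optimal value of the relaxation is also at most that of \eqref{prob:D_A}, and equality follows.

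The argument is essentially bookkeeping, and I do not anticipate a substantive obstacle. The one step that merits a moment's care is confirming that \eqref{line:remove_unnecessary} is automatically satisfiable for the $\lambda$-values arising from any feasible solution --- which reduces precisely to $\ubar{D}_s \le \bar{D}_s$ together with the sign constraints $\lambda_0 = 1$, $\lambda_i \ge 0$ --- and keeping the extended-real comparisons honest in the (here benign) cases where \eqref{prob:D_A} is infeasible or unbounded.
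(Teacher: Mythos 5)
Your proof is correct, but it takes a genuinely different route from the paper's. You argue entirely inside \eqref{prob:D_A}: from the definition of $\mathcal{C}$ you note that each $\zeta^{\mathcal{A},s}_{k,s}$ with $(k,s)\in\mathcal{C}$ has zero coefficient in the objective and in every equality constraint, so deleting its box constraint yields a relaxation whose feasible points can be repaired (by overwriting $\zeta^{\mathcal{A},s}_{k,s}$ with $\ubar{D}_s\sum_{i:\pi^{\mathcal{A},s}(i)=k}\lambda_i$, which is valid since the multiplier is nonnegative and $\ubar{D}_s\le\bar{D}_s$) into feasible points of \eqref{prob:D_A} with the same objective value, giving equality of optimal values in both directions. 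The paper instead argues by contradiction through the primal \eqref{prob:P_A} and strong duality: if deleting the dual box constraints changed the optimum, there would be an optimal primal solution in which exactly one of $\bar{\omega}^{\mathcal{A},s}_{k,s},\ubar{\omega}^{\mathcal{A},s}_{k,s}$ is positive for some $(k,s)\in\mathcal{C}$, yet the primal equality constraint with zero right-hand side forces $\bar{\omega}^{\mathcal{A},s}_{k,s}=\ubar{\omega}^{\mathcal{A},s}_{k,s}$ for every feasible solution, a contradiction. Your version is more elementary and self-contained: it needs no duality, and it handles the degenerate cases (unbounded or infeasible instances) explicitly through the extended-real comparison, whereas the paper's proof leans on the primal--dual correspondence it has already set up for the algorithm and is correspondingly shorter given that machinery. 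Both hinge on the same structural observation, namely that the data $\bigl(b^{\mathcal{A},s}_{k,s},(a^{\mathcal{A},s}_{k,t,j})_{t,j:(t,s,j)\in\mathcal{A}}\bigr)$ vanish for $(k,s)\in\mathcal{C}$, so the content is equivalent even though the mechanics differ.
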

\begin{proof}{Proof.}
Suppose for the sake of developing a contradiction that the optimal objective value of the linear optimization problem~\eqref{prob:D_A} is changed after removing the constraints from line~\eqref{line:remove_unnecessary}. Then it follows from strong duality for linear optimization that there must exist an optimal solution for \eqref{prob:P_A} and a corresponding tuple $(k,s) \in \mathcal{C}$ such that the optimal solution either satisfies $\bar{\omega}^{\mathcal{A},s}_{k,s} > 0$ or satisfies $\ubar{\omega}^{\mathcal{A},s}_{k,s}> 0$, but not both. However, every   feasible solution of the linear optimization problem~\eqref{prob:P_A} must satisfy  the following constraints. 
\begin{align*}
\begin{aligned}
        &&&\bar{\omega}^{\mathcal{A},s}_{k,s}  - \ubar{\omega}^{\mathcal{A},s}_{k,s}  =   - b^{\mathcal{A},s}_{k} + \sum_{t,j: (t,s,j) \in \mathcal{A}} a_{k,t,j}^{\mathcal{A},s} y_{t,s,j} && \forall (k,s) \in \mathcal{C}  \\
    &&&  \bar{\omega}^{\mathcal{A},s}_{k,s} , \ubar{\omega}^{\mathcal{A},s}_{k,s}  \ge 0 &&  \forall (k,s) \in \mathcal{C}
    \end{aligned}
\end{align*}
Since the equality $ ({b}^{\mathcal{A},s}_{ k,s}, ( a^{\mathcal{A},s}_{k,t,j} )_{t,j: (t,s,j) \in \mathcal{A}}) = (0,(0)_{t,j: (t,s,j) \in \mathcal{A}} )$ holds for all $(k,s) \in \mathcal{C}$, it follows from the above reasoning that every feasible solution of the linear optimization problem~\eqref{prob:P_A} must satisfy the equality $\bar{\omega}^{\mathcal{A},s}_{k,s} = \ubar{\omega}^{\mathcal{A},s}_{k,s} $ for all $(k,s) \in \mathcal{C}$. Therefore, we conclude that there must exist an optimal solution of the linear optimization problem~\eqref{prob:P_A} and a corresponding tuple $(k,s) \in \mathcal{C}$ such that the optimal solution  satisfies $\bar{\omega}^{\mathcal{A},s}_{k,s}, \ubar{\omega}^{\mathcal{A},s}_{k,s} > 0$. We have thus obtained a contradiction, which completes the proof of Lemma~\ref{lem:remove_unnecessary}. 
\halmos \end{proof}
% \subsubsection{Updating the active set.}\label{sec:algorithm:add_to_active_set}

% \subsubsection{Initial active set.}  \label{sec:improvements:markov} In the first iteration of our algorithm, we use  the following initial active set:
% \begin{align*}
% \mathcal{A} &= \left \{ (t,1,j): t \in [T], j \in [n] \right \} \cup  \left \{ (t,t,j): t \in [T], j \in [n] \right \}. 
% \end{align*} 
% We refer to the initial active set defined above as the \emph{Markovian active set}. In the case of the Markovian active set,   the optimization problem~\eqref{prob:ldr_A} will yield linear decision rules where the decisions in each period $t \in [T]$ have the form 
% $\bx_{t} = \by_{t1}  + \by_{tt} \zeta_t$. 

\subsection{Addressing infeasibility }  \label{sec:algorithm:infeas}
In Step 1 of our algorithm in  \S\ref{sec:algorithm:activeset}, we assumed that the optimal objective value of the linear optimization problem~\eqref{prob:D_A} was finite. Here we propose a simple modification to Step 1 to address settings in which the optimal objective value of \eqref{prob:D_A} is not finite.\looseness=-1

To motivate our modification, we begin by making the following observation.
\begin{lemma}\label{lem:D_A_feas}
If Assumption~\ref{ass:1} holds, then \eqref{prob:D_A} always has a feasible solution. 
\end{lemma}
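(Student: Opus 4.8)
The plan is to produce an explicit feasible point of \eqref{prob:D_A} by borrowing a feasible point of the LP dual of the \emph{full} (non-sparse) robust counterpart, since it is for that full problem — and only for it — that Assumption~\ref{ass:1} guarantees both feasibility and boundedness. I would first note why the obvious try fails: taking $\lambda_0=1$ and $\lambda_i=0$ for $i\in[m]$ forces $\zeta^{\mathcal{A},s}_{k,s}=0$ for $k\neq\pi^{\mathcal{A},s}(0)$ and $\zeta^{\mathcal{A},s}_{\pi^{\mathcal{A},s}(0),s}\in[\ubar{D}_s,\bar{D}_s]$, so the equalities $\sum_k a^{\mathcal{A},s}_{k,t,j}\zeta^{\mathcal{A},s}_{k,s}=0$ would demand $a_{0,t,j}\,\zeta^{\mathcal{A},s}_{\pi^{\mathcal{A},s}(0),s}=0$, which the box constraints generally preclude; hence one genuinely needs to use Assumption~\ref{ass:1}(a).

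The starting point is the instance of \eqref{prob:P_A_1} in which $\mathcal{A}=\{(t,s,j):j\in[n],\,s\le t\}$, i.e. the robust counterpart of the full problem \eqref{prob:ldr}. By the standard robust-counterpart argument — each inner problem $\max_{\zeta_s\in\mathcal{U}_s}\{(\cdot)\zeta_s\}$ is a bounded one-dimensional LP over a nonempty interval by Assumption~\ref{ass:1}(b), so strong LP duality applies termwise and the maximum over the box $\prod_s\mathcal{U}_s$ separates — this LP has exactly the feasible decision rules and the optimal value of \eqref{prob:ldr}, so by Assumption~\ref{ass:1}(a) it is feasible with finite optimal value. Strong LP duality then gives a feasible point of its dual: multipliers $\hat\lambda_i$ $(i\in\{0,\ldots,m\})$ and $\hat\mu_{i,s}$ $(i\in\{0,\ldots,m\},\,s\in[T])$ with $\hat\lambda_0=1$, $\hat\lambda_i\ge 0$ for $i\in[m]$, $\ubar{D}_s\hat\lambda_i\le\hat\mu_{i,s}\le\bar{D}_s\hat\lambda_i$ for all $i,s$, and $\sum_{i=0}^m a_{i,t,j}\hat\mu_{i,s}=0$ for \emph{every} $j\in[n]$ and $s\le t$.

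Next, imposing the sparsity constraints only deletes the (free, zero-cost) primal variables $y_{t,s,j}$ with $(t,s,j)\notin\mathcal{A}$ from that LP, which in the dual merely removes the corresponding equality constraints $\sum_{i=0}^m a_{i,t,j}\mu_{i,s}=0$. Therefore $(\hat\lambda,\hat\mu)$ is still feasible for the dual of \eqref{prob:P_A_1}: it obeys the same box and sign constraints and now need only satisfy $\sum_{i=0}^m a_{i,t,j}\hat\mu_{i,s}=0$ for $(t,s,j)\in\mathcal{A}$, a subset of equalities it already satisfies. Finally I would transfer this to the merged form \eqref{prob:D_A} by setting, for each $s\in[T]$ and $k\in[K^{\mathcal{A},s}]$,
\[
\hat\zeta^{\mathcal{A},s}_{k,s}\ \triangleq\ \sum_{i\in\{0,\ldots,m\}:\,\pi^{\mathcal{A},s}(i)=k}\hat\mu_{i,s},
\]
and checking the three constraint families of \eqref{prob:D_A}: summing $\ubar{D}_s\hat\lambda_i\le\hat\mu_{i,s}\le\bar{D}_s\hat\lambda_i$ over $\{i:\pi^{\mathcal{A},s}(i)=k\}$ yields $\ubar{D}_s\big(\sum_{i:\pi^{\mathcal{A},s}(i)=k}\hat\lambda_i\big)\le\hat\zeta^{\mathcal{A},s}_{k,s}\le\bar{D}_s\big(\sum_{i:\pi^{\mathcal{A},s}(i)=k}\hat\lambda_i\big)$; the conditions $\hat\lambda_0=1$ and $\hat\lambda_i\ge 0$ are inherited verbatim; and for $(t,s,j)\in\mathcal{A}$, using $a^{\mathcal{A},s}_{\pi^{\mathcal{A},s}(i),t,j}=a_{i,t,j}$ from the definition of $\pi^{\mathcal{A},s}$ gives $\sum_{k=1}^{K^{\mathcal{A},s}}a^{\mathcal{A},s}_{k,t,j}\hat\zeta^{\mathcal{A},s}_{k,s}=\sum_{i=0}^m a_{i,t,j}\hat\mu_{i,s}=0$. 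Hence $(\hat\lambda,\hat\zeta)$ is feasible for \eqref{prob:D_A}.

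The step I expect to be the main obstacle is getting the feasibility bookkeeping right in the regime where \eqref{prob:ldr_A} — equivalently \eqref{prob:P_A} — is itself infeasible (which is the very situation this subsection exists to handle): the naive implication ``bounded primal $\Rightarrow$ feasible dual'' cannot be applied to \eqref{prob:P_A} directly, so the argument must be anchored at the full robust counterpart, whose feasibility and boundedness is the sole place Assumption~\ref{ass:1}(a) is invoked, and one then has to argue that restricting to a sparse active set can only enlarge the relevant dual feasible regions. The rest is the routine translation between the unmerged dual of \eqref{prob:P_A_1} and the merged form \eqref{prob:D_A} carried out in the last paragraph.
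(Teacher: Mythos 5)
Your proposal is correct and takes essentially the same route as the paper: Assumption~\ref{ass:1} together with strong duality for the full robust counterpart yields a feasible point of \eqref{prob:D}, and since \eqref{prob:D_A} only drops equality constraints after aggregating the multipliers $\zeta_{i,s}$ into $\zeta^{\mathcal{A},s}_{k,s}=\sum_{i:\pi^{\mathcal{A},s}(i)=k}\zeta_{i,s}$, that point remains feasible. The paper compresses this into the single observation that the feasible set of \eqref{prob:D_A} is a superset of that of \eqref{prob:D}; your explicit aggregation and constraint-by-constraint check is just the spelled-out version of the same argument.
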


\begin{proof}{Proof.}
We recall from  Assumption~\ref{ass:1} that the optimal objective value of \eqref{prob:ldr} is finite.  Moreover, we observe that the optimal objective value of \eqref{prob:ldr} is equal to the optimal objective value of \eqref{prob:D}, which implies that the optimal objective value of \eqref{prob:ldr} is finite. Finally, we observe that the feasible set of the linear optimization problem~\eqref{prob:D_A} for any active set $\mathcal{A}$ is a superset of the feasible set for the linear optimization problem~\eqref{prob:D}. Hence, we conclude that \eqref{prob:D_A} always has at least one feasible solution. 
\halmos \end{proof}

The above Lemma~\ref{lem:D_A_feas} shows that the optimal objective value of \eqref{prob:D_A} is not finite if and only if the optimal objective value of \eqref{prob:D_A} is unbounded. Therefore, for every active set $\mathcal{A}$, we observe that there  always exists a sufficiently large constant $M \ge 0$ such that the optimal objective value of the following optimization problem is finite:
{\small
\begin{equation}  \label{prob:D_A_M} \tag{D-$\mathcal{A}$-$M$}
\begin{aligned}
&\underset{ \substack{
\lambda_0,\ldots,\lambda_m \in \R, \\
\zeta^{\mathcal{A},s}_{k,s} \in \R \; \forall s \in [T], k \in [K^{\mathcal{A},s}]}}{\textnormal{maximize}}&& - \sum_{i=1}^m c_i \lambda_i -  \sum_{s=1}^T  \sum_{k=1}^{K^{\mathcal{A},s}} b^{\mathcal{A},s}_{k,s} \zeta^{\mathcal{A},s}_{k,s}  \\
&\textnormal{subject to}&& \begin{aligned}[t]
&\sum_{k=1}^{K^{\mathcal{A},s}}  a^{\mathcal{A},s}_{k,t,j}  \zeta^{\mathcal{A},s}_{k,s}   = 0 && \forall (t,s,j) \in \mathcal{A} \\
&  \ubar{D}_s \left( \sum_{i \in \{0,\ldots,m\}: \pi^{\mathcal{A},s}(i) = k} \lambda_{i} \right)   \le \zeta^{\mathcal{A},s}_{k,s}  \le \bar{D}_s \left( \sum_{i \in \{0,\ldots,m\}: \pi^{\mathcal{A},s}(i) = k} \lambda_{i} \right)  && \forall  s \in [T], k \in [ K^{\mathcal{A},s}]\\
& \lambda_0 = 1 \\
&0 \le \lambda_i \le M && \forall i \in \{1,\ldots,m\}. 
\end{aligned}
\end{aligned} 
\end{equation}
}%

In view of the above observation, we now present our simple modification to Step 1 to address settings in which the optimal objective value of \eqref{prob:D_A} is not finite. In our modification of Step 1, we first solve the linear optimization problem~\eqref{prob:D_A}. If the optimal objective value of \eqref{prob:D_A} is finite, then we proceed to Step 2. Otherwise, we solve \eqref{prob:D_A_M} with greater and greater values of $M \ge 0$ until the linear optimization problem~\eqref{prob:D_A_M}  has a feasible solution, at which point we proceed to Step 2. This modification thus ensures that the algorithm can eventually reach Step 3 (in which new tuples will be added to the active set) regardless of the choice of the active set at the beginning of each iteration. 

\subsection{Evaluating the termination condition} \label{sec:evaluating_termination_condition} In Step 2 of each iteration of our algorithm, we decide whether to terminate the algorithm by determining whether the optimal solution obtained from solving \eqref{prob:D_A} satisfies line~\eqref{line:terminate}. In the following Lemma~\ref{lem:evaluate_terminate}, we show that line~\eqref{line:terminate} can be evaluated efficiently, both with respect to computation time and computer memory. Note that the following lemma makes use of the quantity $Z \triangleq \sum_{i=0}^m \sum_{t=1}^T \sum_{s=1}^t \sum_{j=1}^n \mathbb{I} \{ a_{i,t,j} \neq 0 \}$, which is defined here as the total number of nonzeros among the vectors $\ba_{i,t} \in \R^n$ for $i \in \{0,\ldots,m\}$ and $t \in [T]$ in the robust optimization problem. 
\begin{lemma} \label{lem:evaluate_terminate}
The quantities $\sum_{i=0}^m   a_{i,t,j} \left( \frac{\lambda_i}{\sum_{i' \in \{0,\ldots,m\}: \pi^{\mathcal{A},s}(i') = \pi^{\mathcal{A},s}(i)} \lambda_{i'}} \right) \zeta^{\mathcal{A},s}_{\pi^{\mathcal{A},s}(i),s}$  for each $(t,s,j) \notin \mathcal{A}$ can be computed in a total of $\mathcal{O}(T(Z + n K^{\mathcal{A}}))$ time and  $\mathcal{O}(Z + nT^2)$ space. 
\end{lemma}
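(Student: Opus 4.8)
The plan is to exploit the fact that the seemingly complicated weight multiplying $a_{i,t,j}$ in line~\eqref{line:terminate} is in fact a scalar depending only on the constraint index $i$ and the period $s$. Concretely, writing $\Lambda^{s}_{k} \triangleq \sum_{i' \in \{0,\ldots,m\}: \pi^{\mathcal{A},s}(i') = k} \lambda_{i'}$ for the group sums that already appear as coefficients in \eqref{prob:D_A}, and setting $\mu_{i,s} \triangleq \lambda_i \, \zeta^{\mathcal{A},s}_{\pi^{\mathcal{A},s}(i),s} \big/ \Lambda^{s}_{\pi^{\mathcal{A},s}(i)}$ with the convention $0/0 = 0$, the quantity associated with a tuple $(t,s,j) \notin \mathcal{A}$ in line~\eqref{line:terminate} is exactly $\sum_{i=0}^m a_{i,t,j}\,\mu_{i,s}$. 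Thus the computation decomposes into two phases: (I) form the scalars $\{\mu_{i,s}\}$, and (II) for each period $s$, evaluate the sparse inner products $(t,j) \mapsto \sum_{i} a_{i,t,j}\,\mu_{i,s}$ over all $(t,j)$ with $(t,s,j) \notin \mathcal{A}$.

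For phase (I), I would treat the group structure of period $s$ --- the partition of $\{0,\ldots,m\}$ induced by $\pi^{\mathcal{A},s}$ and the cardinalities $K^{\mathcal{A},s}$ --- as available, since it is needed to instantiate \eqref{prob:D_A}. For each period $s$ I make one pass over $i \in \{0,\ldots,m\}$ to accumulate $\Lambda^{s}_{1},\ldots,\Lambda^{s}_{K^{\mathcal{A},s}}$ and a second pass to set each $\mu_{i,s}$; the $0/0$ convention needs no special handling because $\Lambda^{s}_{\pi^{\mathcal{A},s}(i)} = 0$ forces $\lambda_i = 0$ by the nonnegativity of the dual variables in \eqref{prob:D_A}. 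Under Assumption~\ref{ass:1}, any constraint with $\ba_{i,t} = \bzero$ for all $t$ may be removed from the problem at the outset without changing the optimal value (it is either globally redundant over $\mathcal{U}_1 \times \cdots \times \mathcal{U}_T$, or would render \eqref{prob:ldr} infinite); we may therefore assume every constraint contributes at least one nonzero coefficient, so $m = \mathcal{O}(Z)$ and each pass costs $\mathcal{O}(Z + K^{\mathcal{A},s})$.

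For phase (II), I store the data $\{a_{i,t,j}\}$ once in a sparse coordinate format using $\mathcal{O}(Z)$ space, and I process the periods $s = 1,\ldots,T$ one at a time while reusing a single accumulator array indexed by $(t,j)$, of size $\mathcal{O}(nT)$. For a fixed $s$: I sweep over the at most $Z$ stored nonzeros $a_{i,t,j}$, adding $a_{i,t,j}\,\mu_{i,s}$ into the entry for $(t,j)$; after the sweep, that entry equals the quantity in line~\eqref{line:terminate} for $(t,s,j)$, so all these quantities for period $s$ can be read off (entries never written equal zero), tested against line~\eqref{line:terminate}, and then reset in time proportional to the at most $Z$ entries that were written. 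A straightforward accounting of these steps, together with the $\mathcal{O}(nK^{\mathcal{A},s})$ work for handling the per-period group data, gives the claimed total time $\mathcal{O}(T(Z + nK^{\mathcal{A}}))$; the space is dominated by the $\mathcal{O}(Z)$ sparse data, the $\mathcal{O}(nT)$ accumulator, and the $\mathcal{O}(nT^2)$ needed to hold the at most $\tfrac{1}{2}nT(T+1)$ output values indexed by $(t,s,j) \notin \mathcal{A}$, i.e.\ $\mathcal{O}(Z + nT^2)$.

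The main obstacle is not a single inequality but the bookkeeping required to meet both budgets at once: one must avoid materializing all $\Theta(Tm)$ scalars $\mu_{i,s}$ simultaneously or building a dense $(t,s,j)$-by-group intermediate, which is exactly what forces the period-by-period streaming structure and the discipline of resetting only the accumulator entries actually written. A secondary point needing care is confirming that the reduction to $\sum_i a_{i,t,j}\mu_{i,s}$ holds verbatim under the $0/0$ convention and that discarding the constraints with $\ba_{i,t}\equiv\bzero$ is harmless, so that the effective constraint count may be taken to be $\mathcal{O}(Z)$ throughout.
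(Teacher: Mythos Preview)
Your proposal is correct, but it takes a genuinely different route from the paper. The paper first \emph{groups} the constraints by $k = \pi^{\mathcal{A},s}(i)$: it rewrites the target as $\sum_{k=1}^{K^{\mathcal{A},s}} \alpha^{\mathcal{A},s}_{k,s}\bigl[\sum_{i:\pi^{\mathcal{A},s}(i)=k}\lambda_i\,\bba_i\bigr]_{t,j}$ with $\alpha^{\mathcal{A},s}_{k,s}\triangleq \zeta^{\mathcal{A},s}_{k,s}/\Lambda^{s}_k$, then for each period $s$ (b-i) assembles the $K^{\mathcal{A},s}$ sparse matrices $\sum_{i:\pi^{\mathcal{A},s}(i)=k}\lambda_i\,\bba_i$ in $\mathcal{O}(Z)$ time and (b-ii) sums over $k$ for each $(t,s,j)\notin\mathcal{A}$ at cost $\mathcal{O}(K^{\mathcal{A},s})$ apiece; this is exactly where the $nTK^{\mathcal{A}}$ term in the stated bound comes from. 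You instead stay at the constraint level, forming the scalars $\mu_{i,s}$ directly and doing a single sparse sweep over the nonzeros of $\{a_{i,t,j}\}$ into an $(t,j)$-accumulator per period. Your accounting actually yields $\mathcal{O}(TZ)$ time once $m=\mathcal{O}(Z)$ is assumed, so the $\mathcal{O}(nK^{\mathcal{A},s})$ term you throw in at the end is just padding to match the claimed bound rather than work you genuinely incur. The trade-off is that the paper's grouped formulation needs no preprocessing of the constraint set (it works directly off the data already built for \eqref{prob:D_A}), whereas your argument relies on pruning constraints with $\ba_{i,t}\equiv\bzero$ so that $m=\mathcal{O}(Z)$; both are valid, and your route is a bit more elementary.
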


\begin{proof}{Proof.}
To efficiently determine whether line~\eqref{line:terminate} is satisfied, let the matrix  $\bba_i \triangleq (\ba_{i,1}^\intercal,\ldots,\ba_{i,T}^\intercal) \in \R^{T \times n}$ be defined for each $i \in \{0,\ldots,m\}$. We observe from algebra that the following equalities hold for each $(t,s,j) \notin \mathcal{A}$:
\begin{align*}
     \sum_{i=0}^m   a_{i,t,j} \left( \frac{\lambda_i}{\sum_{i' \in \{0,\ldots,m\}: \pi^{\mathcal{A},s}(i') = \pi^{\mathcal{A},s}(i)} \lambda_{i'}} \right) \zeta^{\mathcal{A},s}_{\pi^{\mathcal{A},s}(i),s}  
              &= \sum_{k=1}^{K^{ \mathcal{A},s}} \underbrace{\left( \frac{ \zeta^{\mathcal{A},s}_{k,s}}{\sum_{i \in \{0,\ldots,m\}: \pi^{\mathcal{A},s}(i) =k} \lambda_{i}}\right)}_{\alpha_{k,s}^{\mathcal{A},s}} \left(  \sum_{i \in \{0,\ldots,m\}: \pi^{\mathcal{A},s}(i) = k}   a_{i,t,j} \lambda_i \right)  \\
                  &= \sum_{k=1}^{K^{ \mathcal{A},s}} \alpha_{k,s}^{\mathcal{A},s} \left(  \sum_{i \in \{0,\ldots,m\}: \pi^{\mathcal{A},s}(i) = k}   a_{i,t,j} \lambda_i \right)  \\
                    &= \sum_{k=1}^{K^{ \mathcal{A},s}} \alpha_{k,s}^{\mathcal{A},s}\left[\sum_{i \in \{0,\ldots,m\}: \pi^{\mathcal{A},s}(i) = k}  \lambda_i \bba_{i} \right]_{t,j}.
 \alpha^{\mathcal{A},s}_{k,s} \beta^{\mathcal{A},s}_{k,t,j}. 
\end{align*}
Therefore, determining whether line~\eqref{line:terminate} is satisfied can be split into the following three parts.
\begin{enumerate}[(a)]
\item We compute the quantity $\alpha_{k,s}^{\mathcal{A},s}$ for each $s \in [T]$ and $k \in [K^{\mathcal{A},s}]$.
\item For each period $s \in [T]$:
\begin{enumerate}[(i)]
\item We compute the sparse matrix $\sum_{i \in \{0,\ldots,m\}: \pi^{\mathcal{A},s}(i) = k}  \lambda_i \bba_{i}$  for each $k \in [K^{\mathcal{A},s}]$. 
\item We compute $\sum_{k=1}^{K^{ \mathcal{A},s}} \alpha_{k,s}^{\mathcal{A},s}\left[\sum_{i \in \{0,\ldots,m\}: \pi^{\mathcal{A},s}(i) = k}  \lambda_i \bba_{i} \right]_{t,j}$ for each $t,j$ such that $(t,s,j) \notin \mathcal{A}$. 
\end{enumerate}
\end{enumerate}
We observe that the computation time and space for performing part (a) is $\mathcal{O}(K^{\mathcal{A}})$.  We observe that each iteration of part (b-i) requires a total of $\mathcal{O}(Z)$ time and space to compute and store the sparse matrices $\sum_{i \in \{0,\ldots,m\}: \pi^{\mathcal{A},s}(i) = k}  \lambda_i \bba_{i}$  for each $k \in [K^{\mathcal{A},s}]$. Since the memory for storing the sparse matrices can be reused across iterations of part (b-i), and since part (b-i) will be performed in $T$ iterations, we conclude that a total of $\mathcal{O}(TZ)$ time and $\mathcal{O}(Z)$ space is needed to perform part (b-i) across all iterations.  Finally, we observe  part (b-ii) for each $(t,s,j) \notin \mathcal{A}$ requires $\mathcal{O}(K^{\mathcal{A},s})$ time and $\mathcal{O}(1)$ space. Since part (b-ii) will be performed once for each $(t,s,j) \notin \mathcal{A}$, we observe that the total computation time for performing part (b-ii) across all iterations is 
\begin{align*}
    \mathcal{O} \left(\sum_{(t,s,j) \notin \mathcal{A}} K^{\mathcal{A},s} \right) =    \mathcal{O} \left(\sum_{j=1}^n \sum_{s=1}^T \sum_{t=s}^T  K^{\mathcal{A},s} \right) =  \mathcal{O} \left(\sum_{j=1}^n \sum_{s=1}^T T  K^{\mathcal{A},s} \right)  =   \mathcal{O}\left(nT K^{\mathcal{A}} \right),
\end{align*}
and the total space for performing part (b-ii) across all iterations is
\begin{align*}
    \mathcal{O} \left(\sum_{(t,s,j) \notin \mathcal{A}} K^{\mathcal{A},s} \right)  = \mathcal{O}(n T^2). 
\end{align*}
Combining the above analysis, we conclude that the total computation time for computing the quantities $\sum_{i=0}^m   a_{i,t,j} \left( \frac{\lambda_i}{\sum_{i' \in \{0,\ldots,m\}: \pi^{\mathcal{A},s}(i') = \pi^{\mathcal{A},s}(i)} \lambda_{i'}} \right) \zeta^{\mathcal{A},s}_{\pi^{\mathcal{A},s}(i),s}$  for each $(t,s,j) \notin \mathcal{A}$ is
\begin{align*}
    \mathcal{O}\left(\underbrace{K^{\mathcal{A}}}_{\textnormal{(a)}} +  \underbrace{TZ}_{\textnormal{(b-i)}} +   \underbrace{nT K^{\mathcal{A}}}_{\textnormal{(b-ii)}}\right) = \mathcal{O} \left( T \left( Z + n K^{\mathcal{A}} \right) \right),
\end{align*}
and the total space for computing the quantities $\sum_{i=0}^m   a_{i,t,j} \left( \frac{\lambda_i}{\sum_{i' \in \{0,\ldots,m\}: \pi^{\mathcal{A},s}(i') = \pi^{\mathcal{A},s}(i)} \lambda_{i'}} \right) \zeta^{\mathcal{A},s}_{\pi^{\mathcal{A},s}(i),s}$  for each $(t,s,j) \notin \mathcal{A}$ is
\begin{align*}
    \mathcal{O}\left(\underbrace{K^{\mathcal{A}}}_{\textnormal{(a)}} +  \underbrace{Z}_{\textnormal{(b-i)}} +   \underbrace{nT^2}_{\textnormal{(b-ii)}}\right) = \mathcal{O} \left( Z + nT^2\right).
\end{align*}
Our proof of Lemma~\ref{lem:evaluate_terminate} is complete. 
\halmos \end{proof}

In the production-inventory problem, we observe that $Z = \Theta(ET^2)$ due to constraint~\eqref{prob:example1:d}. Therefore, it follows from Proposition~\ref{prop:productioninventory_reformulation} that Lemma~\ref{lem:evaluate_terminate} requires a total of $\mathcal{O}(nT^2) = \mathcal{O}(| \{ (t,s,j): (t,s,j) \notin \mathcal{A} \}|)$ space for the production-inventory problem. Although this memory usage is quadratic in the number of periods $T$, we note that the memory usage can be further reduced from $\mathcal{O}(Z + nT^2)$  to $\mathcal{O}(Z)$ by reusing the allocated memory for representing each of the quantities $\sum_{i=0}^m   a_{i,t,j} \left( \frac{\lambda_i}{\sum_{i' \in \{0,\ldots,m\}: \pi^{\mathcal{A},s}(i') = \pi^{\mathcal{A},s}(i)} \lambda_{i'}} \right) \zeta^{\mathcal{A},s}_{\pi^{\mathcal{A},s}(i),s}$  for each $(t,s,j) \notin \mathcal{A}$.  Hence, we conclude that the computer memory required for performing Step 2 and Step 3 in our algorithm can scale linearly in the computer memory required to encode the constraints of the underlying dynamic robust optimization problem.

% \section{Proofs from \S\ref{sec:algorithm:improvements}}

\section{Additional Numerical Results} \label{appx:numerical}

\subsection{Practical Implications of Theorem~\ref{thm:main}} \label{appx:numerical:implications}

In this appendix, we perform numerical experiments to investigate the practical implications of Theorem~\ref{thm:main} in predicting the sparsity of optimal linear decision rules. Our numerical experiments in this appendix focus on the same problem setup  of production-inventory problems that is described at the beginning of  \S\ref{sec:experiment}. That is, our numerical experiments in this subsection focus on instances of \eqref{prob:ldr_1} in which the customer demand and production constraints follow the same setup as given in lines~\eqref{prob:experiment_setup:1}-\eqref{prob:experiment_setup:3}.  Similarly as in \S\ref{sec:experiment}, we are   particularly interested in settings where  $T$ grows large, in which case the robust optimization problem serves as an approximation of a continuous-review ordering system.   For each value of $T$ and $E$, we compute the optimal linear decision rules by solving the linear optimization formulation from \cite[Equation (39)]{ben2004adjustable} using primal simplex method.  

In  Figure~\ref{fig:plot1}, we present the results of numerical experiments with $E=3$ factories and varying numbers of periods $T$. The results of our numerical experiments provide two key takeaways. 

 Our first takeaway from Figure~\ref{fig:plot1} is that the level of sparsity of the optimal linear decision rules obtained in the numerical experiments is very significant when the number of periods is large. Indeed, the left plot in Figure~\ref{fig:plot1} shows that the number of nonzero parameters in the optimal linear decision rules  decreases to $3\%$ of the total number of parameters when inventory levels and production decisions are made twice per week over a selling horizon of one year. From a practical perspective, such a low density level of nonzeros in the optimal linear decision rules provides a clear motivation  for the growing stream of research that harnesses sparsity to develop faster and more memory-efficient algorithms in various applications~\cite{bandi2019sustainable,lorca2016multistage,ardestani2018value,delage2021column}. 

Our second takeaway from Figure~\ref{fig:plot1} is that Theorem~\ref{thm:main} is found to be predictive of the level of sparsity in the  optimal linear decision rules. In the right plot of Figure~\ref{fig:plot1}, we  present  the number of nonzero parameters in the optimal linear decision rules obtained using primal simplex method, the upper bound from Theorem~\ref{thm:main}, and the number of parameters in static decision rules  (which serves as a natural lower bound). Indeed, the multiplicative gap between the upper bound from Theorem~\ref{thm:main} and the lower bound from the static rule is very narrow (it is roughly a factor of $11$). As we can see, the numbers of nonzero parameters in the optimal linear decision rules stay in this a narrow band and grow linearly in the number of stages. This result not only validates our theory but also showcases that the number of nonzero parameters in optimal linear decision rules does not grow sublinearly in $T$.

\begin{figure}[t]
\FIGURE{\centering \includegraphics[width=0.47\linewidth]{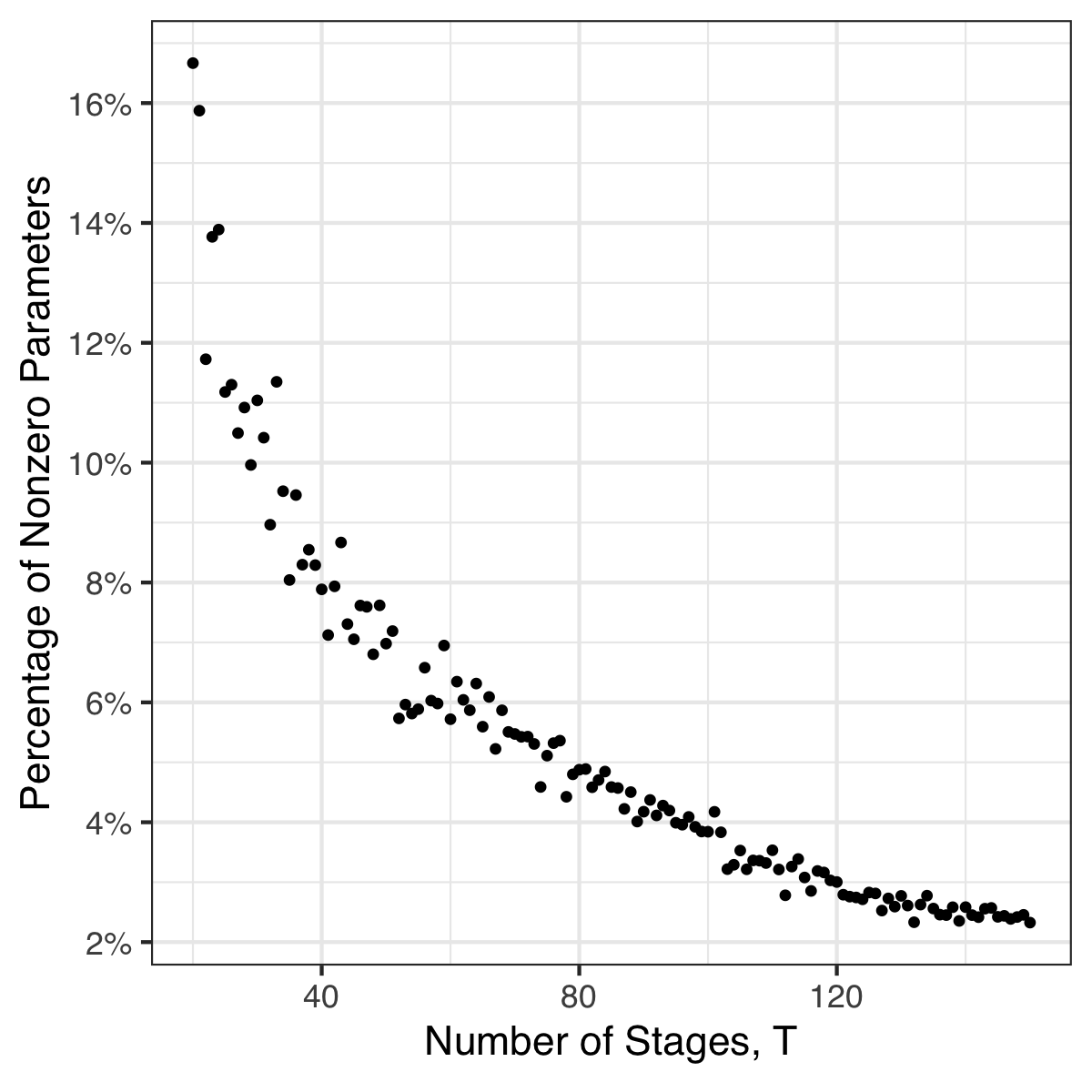} \includegraphics[width=0.47\linewidth]{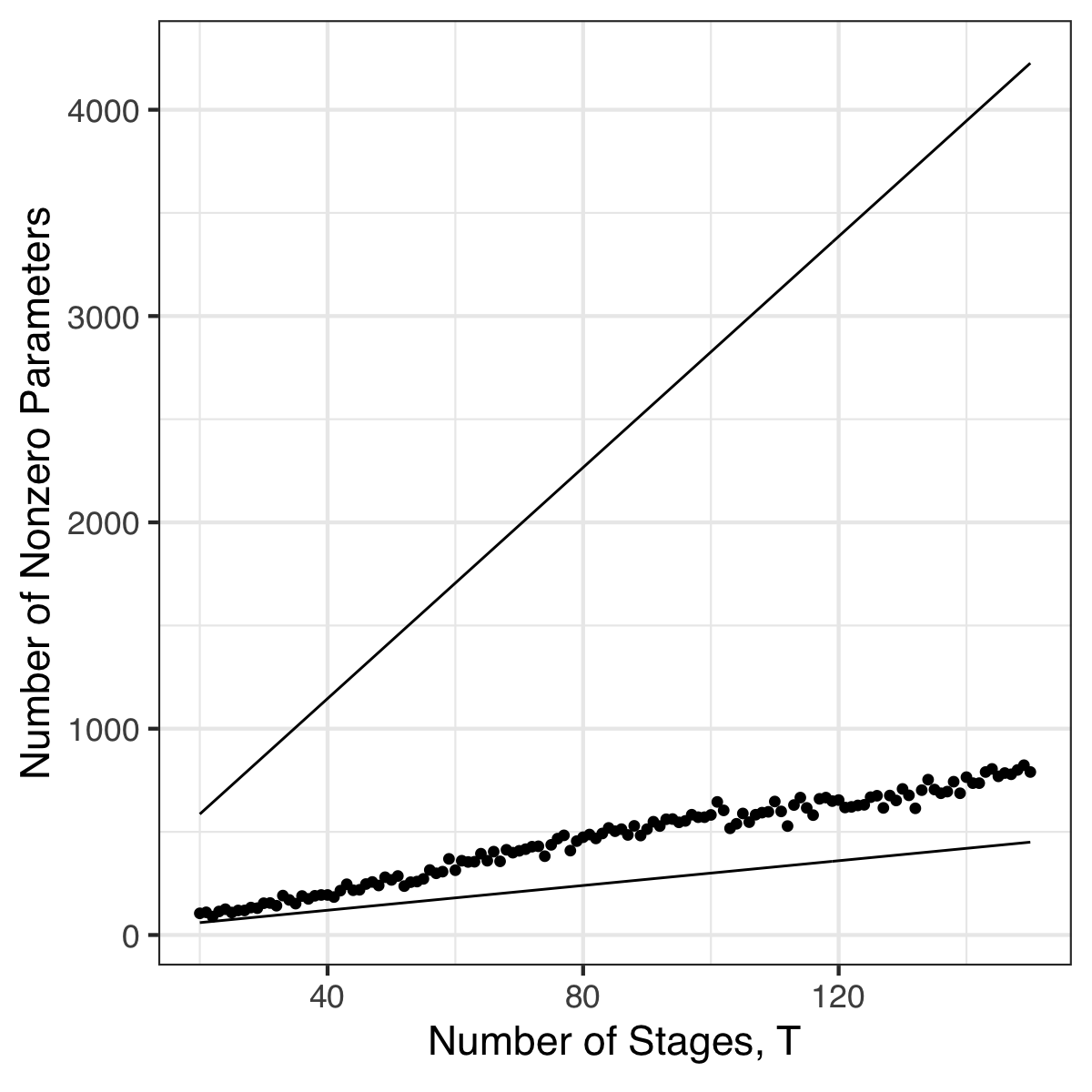} }
{Sparsity of optimal linear decision rules for production-inventory problem, $E = 3$. \label{fig:plot1}}
   {Each point represents the optimal linear decision rules computed for the corresponding number of stages $T$ and for $E=3$ factories. Left figure shows the percentage of parameters of optimal linear decision rules which are nonzero. Right figure shows the number of nonzero parameters in optimal linear decision rules compared to the upper bound from Theorem~\ref{thm:main} (top solid black line) and the number of parameters in static decision rules (bottom solid black line).   }
\end{figure}

In Figures~\ref{fig:plot2} and \ref{fig:plot3}, we present  numerical results which are similar to those from Figure~\ref{fig:plot1} for cases where the number of factories is $E=4$ and $E=5$. The results show that the findings from Figure~\ref{fig:plot1} are not exclusive to the case with $E = 3$ factories.

\begin{figure}[h]
\FIGURE{\centering \includegraphics[width=0.47\linewidth]{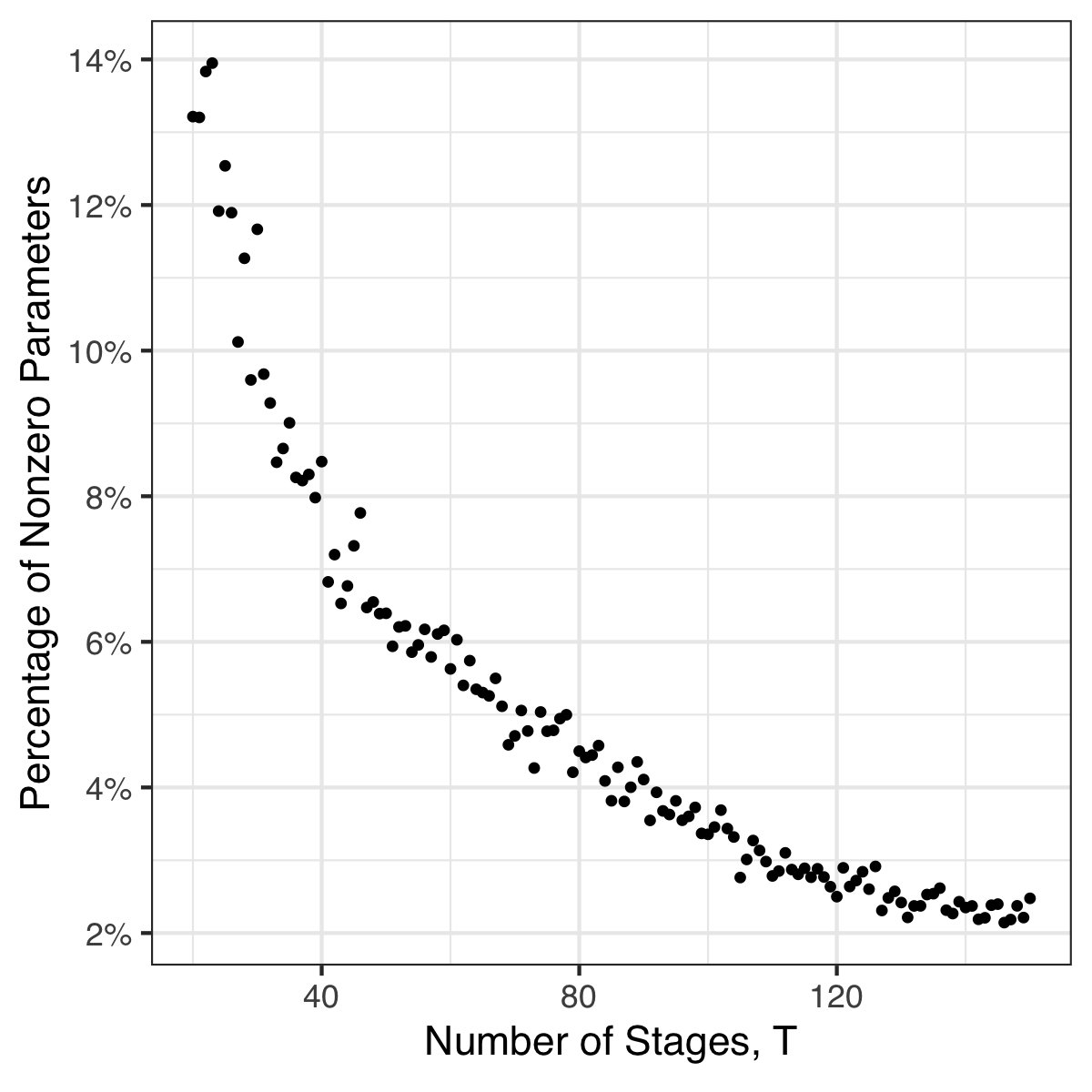} \includegraphics[width=0.47\linewidth]{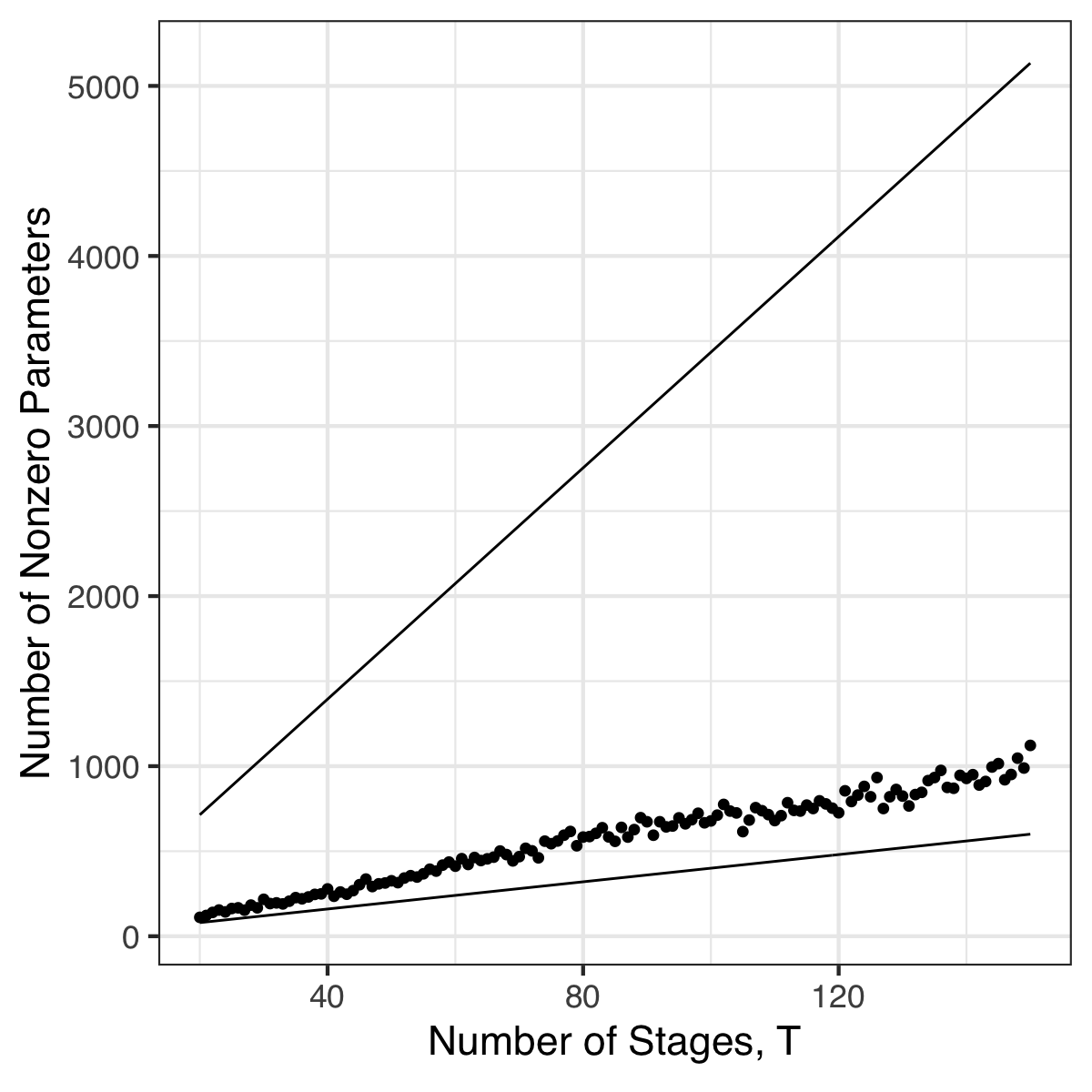} }
{Sparsity of optimal linear decision rules for production-inventory problem, $E = 4$. \label{fig:plot2}}
   {Each point represents the optimal linear decision rules computed for the corresponding number of stages $T$ and for $E=4$ factories. Left figure shows the percentage of parameters of optimal linear decision rules which are nonzero. Right figure shows the number of nonzero parameters in optimal linear decision rules compared to the upper bound from Theorem~\ref{thm:main} (top solid black line) and the number of parameters in static decision rules (bottom solid black line).   }
\end{figure}
\begin{figure}[h]
\FIGURE{\centering \includegraphics[width=0.47\linewidth]{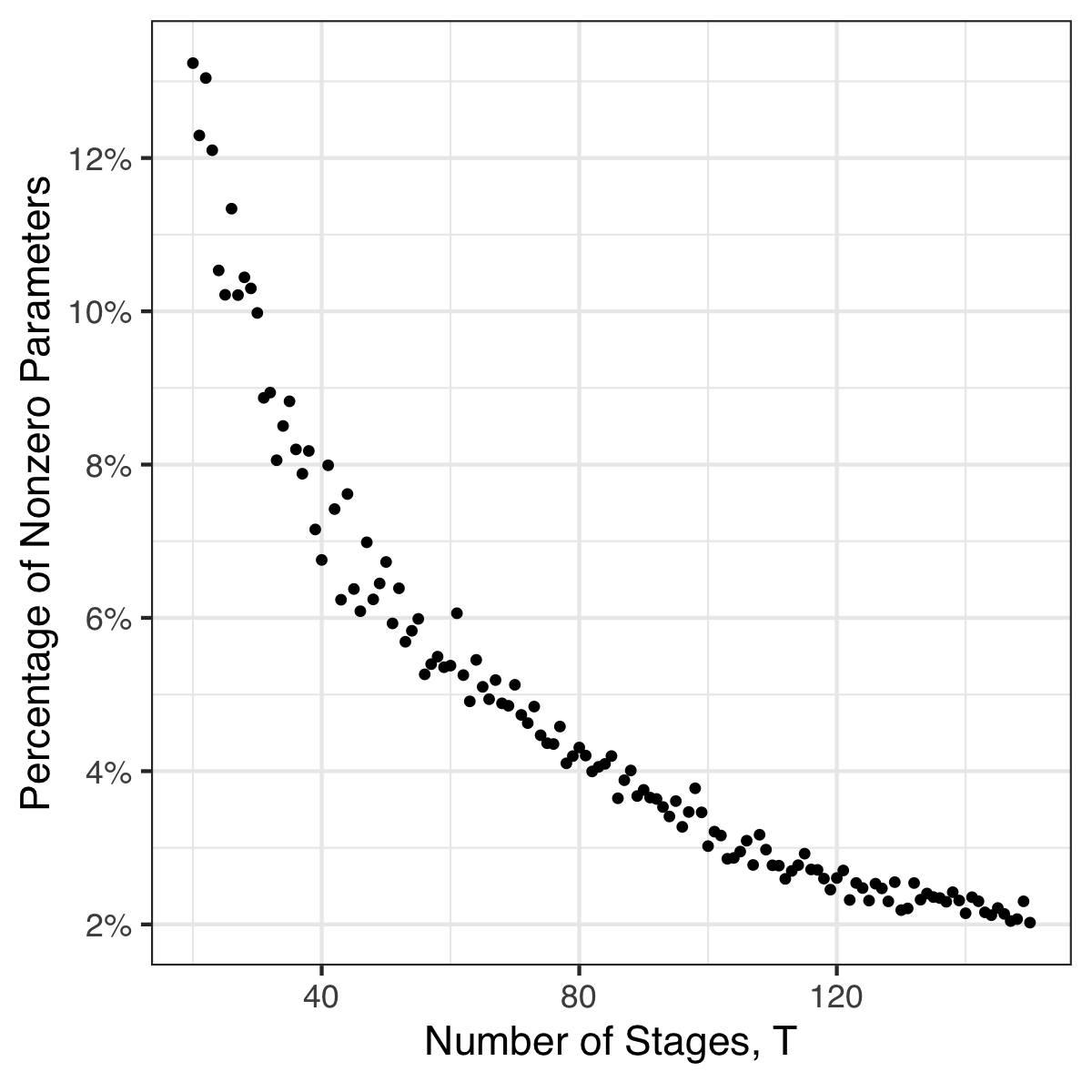} \includegraphics[width=0.47\linewidth]{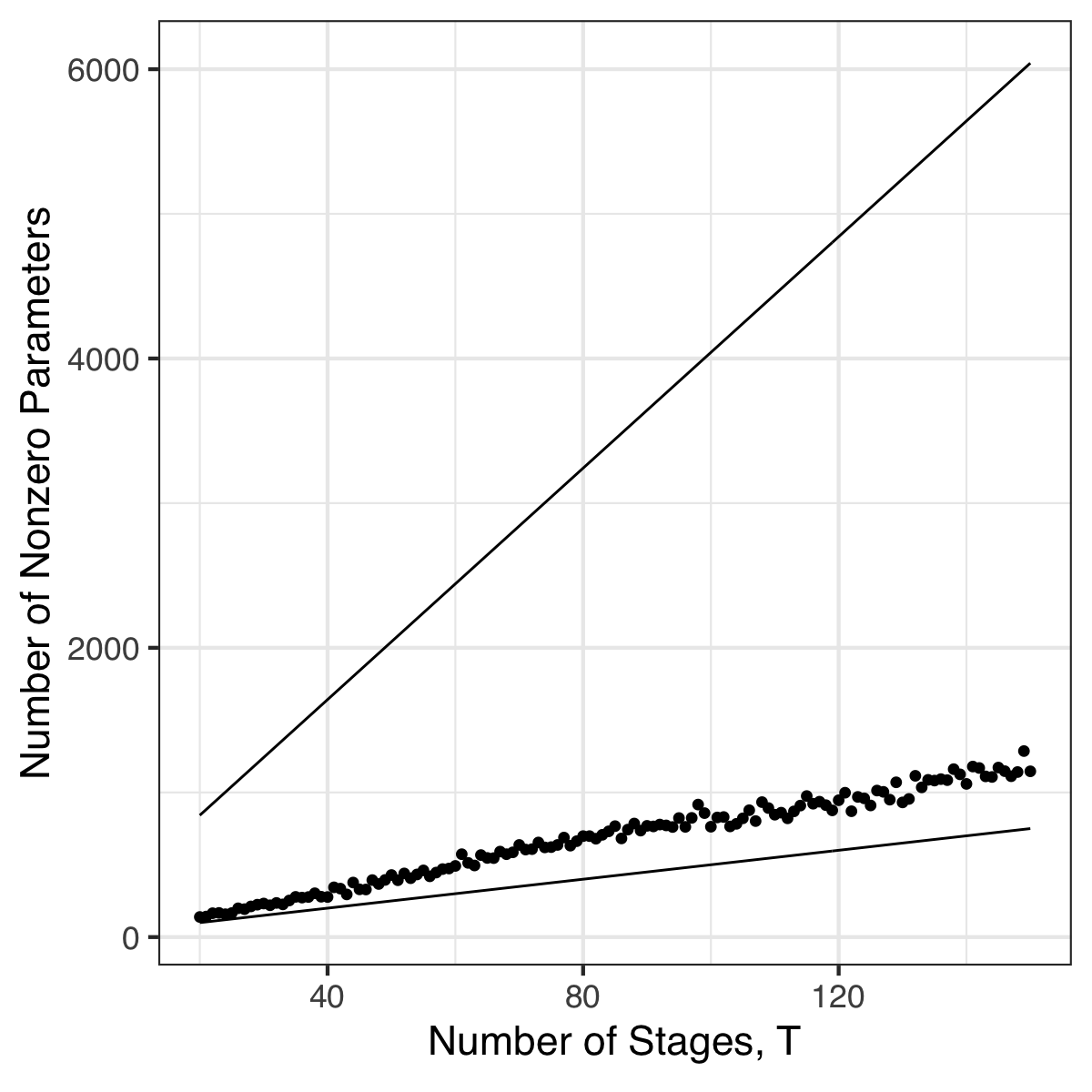} }
{Sparsity of optimal linear decision rules for production-inventory problem, $E = 5$. \label{fig:plot3}}
   {Each point represents the optimal linear decision rules computed for the corresponding number of stages $T$ and for $E=5$ factories. Left figure shows the percentage of parameters of optimal linear decision rules which are nonzero. Right figure shows the number of nonzero parameters in optimal linear decision rules compared to the upper bound from Theorem~\ref{thm:main} (top solid black line) and the number of parameters in static decision rules (bottom solid black line).   }
\end{figure}

\subsection{Comparison of LP Solvers} \label{appx:numerical:lpcomparison}
In Figure~\ref{fig:gurobi}, we present numerical results which are similar to those from Figure~\ref{fig:active_set_fixed_E} for solving the robust counterpart with Gurobi using primal simplex, dual simplex, and barrier methods,  and the recent first-order based PDLP solver~\cite{applegate2021practical} implemented in HiGHS. In these additional numerical experiments, the methods are run to optimality (but terminated if the computation time exceeds 1000 seconds). In all of the experiments in Figure~\ref{fig:gurobi} for which the computation time is equal to 1000 seconds, the solver returned neither a primal feasible solution nor a dual feasible solution. These additional experiments thus demonstrate that the slow computation time of the robust counterpart in Figure~\ref{fig:active_set_fixed_E} is not a consequence of using barrier method to solve the linear optimization problem obtained from the robust counterpart technique, but rather  a consequence of the shear size of the linear optimization problem that must be solved.  

In Figure~\ref{fig:gurobi}, we observe that PDLP~\cite{applegate2021practical} exhibits worse numerical performance compared to the Gurobi barrier method. Notably, PDLP requires a significant number of iterations—sometimes exceeding one million—to converge for these instances. We believe this is fundamentally due to the highly degenerate structure of our dual formulation, which is known to adversely impact the convergence of PDLP~\cite{lu2024geometry,lu2024first}. While GPU implementations of PDLP~\cite{lu2023cupdlp,lu2023cupdlpc} could potentially enhance numerical performance, they still require a similar number of iterations. Moreover, GPU memory is generally more expensive than CPU memory, making this approach less practical in certain scenarios. In this work, we do not compare the performance of GPU-based PDLP, as our active-set algorithm is implemented on a CPU. Exploring how GPUs might further accelerate the solution of production inventory problems via the active set algorithm is an avenue we leave for future research.

\begin{figure}[t]
\FIGURE{\centering \includegraphics[width=1\linewidth]{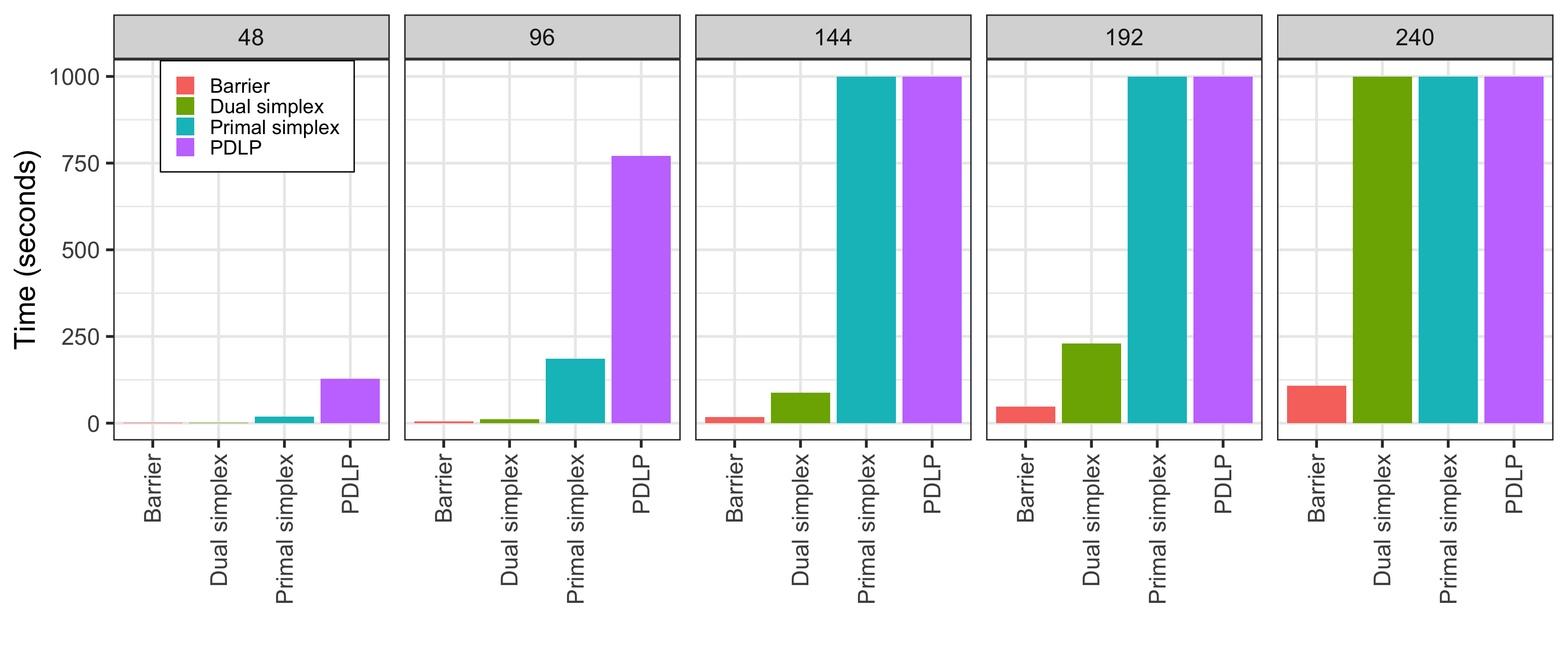} }
{Computation times for robust counterpart with $E = 5$ factories. \label{fig:gurobi}}
   {Results shown for experiments with $E=5$ factories and $T \in \{48,96,144,192,240\}$ time periods. Bars show the computation time (in seconds) for solving the robust counterpart using primal simplex, dual simplex, and barrier method. All methods are either solved to optimality or terminated at 1000 seconds. }
\end{figure}

 \end{APPENDICES}

\end{document}